\newtheorem{theorem}{Theorem}
\newtheorem{lemma}{Lemma}[section]
\newtheorem{proposition}[lemma]{Proposition}
\newtheorem{definition}{Definition}
\newtheorem{remark}[lemma]{Remark}
\numberwithin{equation}{section}
\newcommand{\NN}{{\mathbb N}}
\newcommand{\ZZ}{{\mathbb Z}}
\newcommand{\RR}{{\mathbb R}}
\newcommand{\MM}{{\mathbb M}}
\newcommand{\causs}{\vec{\sigma}} 
\newcommand{\maxss}{\vec{\tau}} 
\newcommand{\emE}{\mathbf{E}}
\newcommand{\emD}{\mathbf{D}}
\newcommand{\ynote}[1]{{\color{blue}#1}}
\newcommand{\DD}{{\mathbb{D}}}
\newcommand{\email}[1]{\href{mailto:#1}{\texttt{#1}}}
\newcommand{\bfa}{\mathbf{a}}
\newcommand{\bfb}{\mathbf{b}}
\newcommand{\bfc}{\mathbf{c}}
\newcommand{\bfC}{\mathbf{C}}
\newcommand{\bfd}{\mathbf{d}}
\newcommand{\bfB}{\mathbf{B}}
\newcommand{\bfp}{\mathbf{p}}
\newcommand{\bg}{\mathbf{g}}
\newcommand{\ff}{\mathbf{f}}
\newcommand{\nn}{\mathbf{n}}
\newcommand{\ee}{\mathrm{e}}
\newcommand{\uu}{\mathbf{u}}
\newcommand{\vv}{\mathbf{v}}
\newcommand{\ww}{\mathbf{w}}
\newcommand{\yy}{y}
\renewcommand\vec[1]{\boldsymbol #1} 
\newcommand{\charfn}{\mathds{1}}
\newcommand{\per}{\mathrm{per}}
\newcommand{\sym}{\mathrm{sym}}
\newcommand{\mat}{\mathrm{mat}}
\newcommand{\hmeas}{\mathcal{H}^{d-1}}
\newcommand{\bfU}{\mathbf{U}}
\newcommand{\abs}[1]{\left\lvert #1\right\rvert}
\newcommand{\norm}[1]{\left\lVert #1\right\rVert}
\newcommand{\cv}[1][]{%
\ifthenelse{\isempty{#1}}{\xrightarrow[\hphantom{~2~}]{}}{\xrightarrow[\hphantom{~2~}]{#1}}%
}
\newcommand{\wcv}[1][]{%
\ifthenelse{\isempty{#1}}{\xrightharpoonup[\hphantom{~2~}]{}}{\xrightharpoonup[\hphantom{~2~}]{#1}}%
}
\newcommand{\calD}{\mathcal{D}}
\newcommand{\calL}{\mathcal{L}}
\newcommand{\calN}{\mathcal{N}}
\newcommand{\calM}{\mathcal{M}}
\newcommand{\fraM}{\mathfrak{M}}
\newcommand{\loc}{\mathrm{loc}}
\DeclareMathOperator{\diam}{diam}
\DeclareMathOperator{\di}{d\!}
\DeclareMathOperator{\Div}{{div}}
\def\XXint#1#2#3{{\setbox0=\hbox{$#1{#2#3}{\int}$ }
\vcenter{\hbox{$#2#3$ }}\kern-.6\wd0}}
\DeclarePairedDelimiterX\Set[1]\{\}{%
  #1%
}
\begin{document}


\title{Explicit corrector in homogenization of monotone operators and its application to nonlinear dielectric elastomer composites}

\author{ Thuyen Dang\footnote{Department of Statistics/Committee on
    Computational and Applied Mathematics, University of Chicago, 5747
    S. Ellis Avenue, Chicago, Illinois 60637, USA
    (\email{thuyend@uchicago.edu}).}
  \and
  Yuliya Gorb\footnote{National Science Foundation, 2415 Eisenhower Avenue,
    Alexandria, Virginia 22314, USA (\email{ygorb@nsf.gov}). }
  \and
  Silvia Jim\'{e}nez Bola\~{n}os\footnote{Department of Mathematics,
    Colgate University, 13 Oak Drive, Hamilton, New York 13346, USA
    (\email{sjimenez@colgate.edu}). } }

\date{}

\maketitle

\begin{abstract}
  This paper concerns the rigorous periodic homogenization for a
  weakly coupled electroelastic system of a nonlinear electrostatic
  equation with an elastic equation enriched with
  electrostriction. Such coupling is employed to describe dielectric
  elastomers or deformable (elastic) dielectrics. It is shown that the effective response of
  the system consists of a homogeneous dielectric elastomer described
  by a nonlinear weakly coupled system of PDEs whose coefficients
  depend on the coefficients of the original heterogeneous material, the geometry of the composite, and the periodicity of the original
  microstructure. The approach developed here for this {\it nonlinear}
  problem allows us to obtain an
    explicit corrector result for the homogenization of monotone
    operators with minimal regularity assumptions. Two $L^p-$gradient
    estimates for elastic systems with discontinuous coefficients are
    also obtained.
\end{abstract}


\section{Introduction}
\label{sec:introduction}

In recent years there has been a growing interest towards a class of materials known as dielectric elastomers that can exhibit coupled electrical and mechanical behavior, see cf. \cite{carpiDielectricElastomersElectromechanical2011}.
A unique property possessed by such materials, known as {\it electrostriction},  which means they can respond to an external electric field by changing their size and shape, makes them appealing for various practical applications, e.g. 
soft robotics, artificial muscles, active camouflage, haptic devices,
energy harvesting, and others, see e.g. \cite{carpiDielectricElastomersElectromechanical2011} and \cite{erturkPiezoelectricEnergyHarvesting2011}.
Homogenization theory can be used to guide the design of dielectric elastomers with enhanced electromechanical couplings, e.g. \cite{hakimisiboniDielectricElastomerComposites2014}, and this paper is devoted to the rigorous periodic homogenization of such a coupling.


The governing equations describing the system under consideration consist of a nonlinear (scalar) electrostatic equation in the presence of a bounded free body charge weakly coupled with an elastic (vectorial) equation that involves an {\it electrostriction} term. Here, {\it weakly} coupling means that the elastic displacement does not enter the electrostatic equation. The PDE system is posed on a heterogeneous bounded domain with {\it periodic microstructure} of size $0< \varepsilon \ll 1$. For simplicity, we focus on Dirichlet boundary conditions only. 
The goals of this paper are twofold. First, it aims at determining the {\it macroscopic} or {\it effective} behavior of the considered periodic composite under the assumption of scale separation. This amounts to developing an asymptotic analysis of the limiting response of the given PDE system as  $\varepsilon$, the size of the microstructure, goes to zero. Second, since many necessary facts (e.g. the regularity of the solution to the original fine-scale problem) are not readily available, this paper presents a number of stand-alone results that could be utilized in future contributions to the topic of periodic homogenization for nonlinear electrostatic and/or elastic composite materials under minimal regularity assumptions of the original system.

The rigorous justification of a model for dielectric elastomers and the
derivation of its effective system, using the mathematical theory of
homogenization, were carried out in
\cite{tianDielectricElastomerComposites2012,francfortEnhancementElastodielectricsHomogenization2021}
and references therein. To carry out the homogenization of their system, the
authors of \cite{tianDielectricElastomerComposites2012} made a
strict integrability assumption, requiring that the solution of the
electrostatic equation belongs to a class of least $W^{1,4}$-functions. The results in
\cite{francfortEnhancementElastodielectricsHomogenization2021} showed
that if the coefficients of the electrostatic equation are piecewise
H\"{o}lder continuous, then indeed its solution belongs to $W^{1,p}$,
for any $p \in [1,\infty)$. Later, in
\cite{dangGlobalGradientEstimate2022}, this result was extended to
the case $p =\infty$. A similar model for the case of magnetic
suspensions was investigated in
\cite{dangHomogenizationNondiluteSuspension2021,dangGlobalGradientEstimate2022}.

In the contributions cited above
\cite{tianDielectricElastomerComposites2012,francfortEnhancementElastodielectricsHomogenization2021}, the materials studied were {\it linear}, i.e., the constitutive
relationship between the electric field $\emE$ and the electric
displacement $\emD$ was assumed linear. However, when this
relationship is {\it nonlinear}, e.g., as in the case of {\it
  ferroelectric materials}, a new model, as well as a new approach,
are required to obtain the corresponding homogenized response. In
this paper, we consider $\emE$ and $\emD$ satisfying a nonlinear
constitutive relation that yields a {\it nonlinear divergence
  equation}. Therefore, the improved gradient estimates, obtained by
the compactness method used in
\cite{francfortEnhancementElastodielectricsHomogenization2021,dangGlobalGradientEstimate2022},
are no longer available. Instead, in this paper, we derive a new
approach that does not require such estimates or the renormalization
framework of
\cite{gaudielloHomogenizationBrushProblem2017,muratHomogenizationRenormalizedSolutions1991},
which are the typical techniques to deal with problems of low
regularity source terms. To gain additional regularity of the
solution, we apply regularity theory to not only the fine-scale or the
effective systems but also to the two-scale homogenized one. Two-scale
convergence acts as an ``intermediate" topology between weak and
strong convergence that allows canceling the dependence on the size of
microstructure, thus providing a system with rather nice
coefficients, besides the effective one. This idea enables us to relax
several regularity assumptions needed before in
\cite{francfortEnhancementElastodielectricsHomogenization2021,tianDielectricElastomerComposites2012,dangGlobalGradientEstimate2022},
and also to extend the cited works to
nonlinear cases.

To implement this idea for the current problem, several ingredients
are needed: a result from the nonlinear Cald\'{e}ron-Zygmund theory
\cite{phucGlobalIntegralGradient2014} (see also
\cite{kuusiGuideNonlinearPotential2014,mingioneGradientEstimatesDuality2010}),
an estimate for elliptic systems
\cite{liEstimatesEllipticSystems2003}, the existence of the
(generalized) Green's function \cite{conlonGreenFunctionElliptic2017},
the SOLA technique (existence of the solution by limit of
approximations \cite{boccardoNonlinearEllipticParabolic1989}), results
from the theory of two-scale convergence (for $L^p$ and $\mathrm{BV}$
functions)
\cite{allaireHomogenizationTwoscaleConvergence1992,nguetsengGeneralConvergenceResult1989,amarTwoscaleConvergenceHomogenization1998,ferreiraReiteratedHomogenizationBV2012},
an interpolation theorem \cite{grafakosModernFourierAnalysis2014}, a
corrector result in homogenization of monotone operators \cite{dalmasoCorrectorsHomogenizationMonotone1990}, and a duality argument.

Along with the sought-after homogenized response, in this paper we
obtain three stand-alone results, namely: (i) \cref{sec:main-results-1},
  which provides an explicit first-order corrector for the nonlinear electrostatic problem
  with minimal regularity assumptions, while (ii) \cref{sec:appendix-1} and  (iii)
  \cref{sec:appendix} provide $L^p-$gradient estimates for the elasticity
  system via an interpolation argument.
Besides the fact that our approach is built for the {\it nonlinear} problem with {\it minimal regularity assumptions}, combined with ideas from
\cite{cherdantsevHighContrastHomogenisation2017,davoliHomogenizationHighcontrastMedia2023,balHomogenizationHydrodynamicTransport2021} it could also be extended to the {\it high-contrast} case, for which the minimal and maximal values of coefficients of the underlying PDEs are vastly different. This case will be reported in future publications by the authors elsewhere.

This paper is organized as follows. In
\cref{sec:formulation}, the main notations are introduced and
the formulation of the fine-scale problem is discussed. Our main
result is stated in \cref{sec:main-result}, the conclusions
are given in \cref{sec:conclusions}, and
auxiliary facts are discussed in \cref{sec:an-appendix}.

\section{Formulation}
\label{sec:formulation}
\subsection{Notation and definitions}
\label{ss:setup}

Throughout this paper, the scalar-valued functions, such as the
pressure $p$, are written in usual typefaces, while vector-valued or
tensor-valued functions, such as the displacement $\uu$ and the
stress tensor $\causs$, are written in bold.
Sequences are indexed by
  superscripts ($\phi^i$), while elements of vectors or tensors
are indexed by numeric subscripts ($x_i$). Finally, the Einstein
summation convention is used whenever applicable; $\delta_{ij}$ is the Kronecker delta, and $\epsilon_{ijk}$ is the permutation symbol. The letter $C$ represents a constant, independent of $\varepsilon$, that can represent different values from line to line. 

Consider $\Omega \subset \RR^d$, for $d \ge 2$, a simply connected and
bounded domain of class $C^{1,1}$, occupied by a deformable
electroelastic material, and let $Y\coloneqq \left[ -\frac{1}{2}, \frac{1}{2} \right]^d$ be the unit cell
in $\RR^d$. The unit cell $Y$ is decomposed into:
$$Y=Y_s\cup Y_f \cup \Gamma,$$
where $Y_s$, representing the inclusion, and $Y_f$,
representing the matrix, are open sets in
$\mathbb{R}^d$, and $\Gamma$ is the closed $C^{1,1}$ interface that
separates them.

Let $i = (i_1, \ldots, i_d) \in \ZZ^d$ be a vector of indices and $\{\ee^1, \ldots, \ee^d\}$ be the canonical basis of $\RR^d$. 
For a fixed small $\varepsilon > 0,$ we define the dilated sets: 
\begin{align*}
  Y^{\varepsilon}
  \coloneqq \varepsilon Y, ~~
  Y^\varepsilon_i 
  \coloneqq \varepsilon (Y + i),~~
  Y^\varepsilon_{i,s}
  \coloneqq \varepsilon (Y_s + i),~~
  Y^\varepsilon_{i,f}
  \coloneqq \varepsilon (Y_f + i),~~
  \Gamma^\varepsilon_i 
  \coloneqq \partial Y^\varepsilon_{i,s}.
\end{align*}
Typically, in homogenization theory, the positive number $\varepsilon
\ll 1$ is referred to as the {\it size of the microstructure}. The
effective or homogenized response of the given suspension
corresponds to the case $\varepsilon=0$, whose  derivation and justification is the main focus of this paper. 

We denote by $\nn_i,~\nn_{\Gamma}$ and $\nn_{\partial \Omega}$ the unit normal vectors to $\Gamma^{\varepsilon}_{i}$ pointing outward $Y^\varepsilon_{i,s}$, on $\Gamma$ pointing outward $Y_{s}$ and on $\partial \Omega$ pointing outward, respectively; and also, we denote by $\di \hmeas$ the $(d-1)$-dimensional Hausdorff measure.
In addition, we define the sets:
\begin{align}
  \label{eq:116}
  \begin{split}
    I^{\varepsilon} 
    &\coloneqq \{ 
    i \in \ZZ^d \colon Y^\varepsilon_i \subset \Omega
    \},~~
    \Omega_s^{\varepsilon} 
    \coloneqq \bigcup_{i\in I^\varepsilon}
Y_{i,s}^{\varepsilon},~~
    \Omega_f^{\varepsilon} 
    \coloneqq \Omega \setminus \Omega_s^{\varepsilon},~~
    \Gamma^\varepsilon 
    \coloneqq \bigcup_{i \in I^\varepsilon} \Gamma^\varepsilon_i,\\
  J^{\varepsilon}
  &\coloneqq
  \left\{
  j \in \ZZ^d\colon Y_j^{\varepsilon} \cap \left( \RR^d \setminus \Omega \right) \ne
  \varnothing \text{ and } Y_j^{\varepsilon} \cap \Omega \ne \varnothing
  \right\},\\
  Z_i^{\varepsilon}
  &\coloneqq
  \begin{cases}
    Y_i^{\varepsilon} &\text{ if } i \in I^{\varepsilon},\\
    Y_i^{\varepsilon} \cap \Omega &\text{ if } i \in J^{\varepsilon},
  \end{cases}
  \end{split}
\end{align}
see \cref{fig:1}.

\begin{figure}[ht]
\centering
\def\svgwidth{0.5\columnwidth}
\import{figures/}{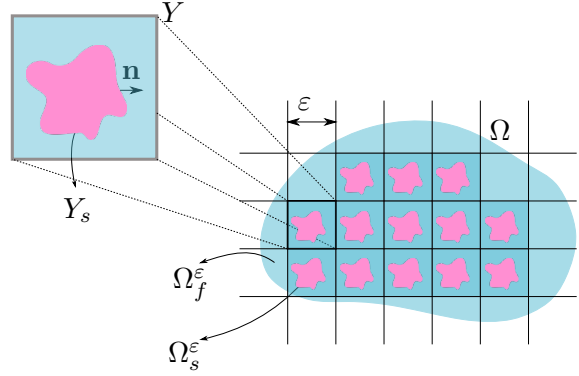}
\caption{Reference cell $Y$ and domain $\Omega$.}
\label{fig:1}
\end{figure}

The following spaces are used throughout
this paper.
\begin{itemize}[wide]
\item $C_c(\Omega)$ --  the space of continuous functions with compact support in $\Omega$;
    \item $C_{\per}(Y)$ -- the subspace of $C(\RR^d)$ of $Y$-periodic functions;
    \item $C^{\infty}_{\per}(Y)$ -- the subspace of $C^{\infty}(\RR^d)$ of $Y$-periodic functions;
    \item $H^1_{\per}(Y)$ -- the closure of $C^{\infty}_{\per}(Y)$ in the $H^1$-norm;
    \item
    
    $\mathcal{D}(\Omega, X)$ with $X$ being a Banach space -- the space of infinitely differentiable functions from $\Omega$ to $X$, whose  support is a compact set of $\mathbb{R}^d$ contained in $\Omega$.

    \item $L^p(\Omega, X)$ with $X$ being a Banach space and $1 \le p \le \infty$ -- the space of measurable functions $w \colon x \in \Omega \mapsto w(x) \in X$ such that
    $
    \norm{w}_{L^p(\Omega, X)}
    \coloneqq \left(\int_{\Omega} \norm{w(x)}^p_{X} \di x\right)^\frac{1}{p} < \infty.
    $

    \item $L^p_{\per}\left(Y, C(\bar{\Omega})\right)$ -- the space of measurable functions $w \colon y \in Y \mapsto w(\cdot,y) \in C(\bar{\Omega})$, such that 
    $w$ is $Y$-periodic with respect to $y$ and
    $
    \int_{Y} \left(\sup_{x \in \bar{\Omega}} \abs{w(x,y)}\right)^p \di y 
    < \infty.
    $
  \item $\calM(\Omega)$  -- the space of finite Radon measures on
    $\Omega$, i.e., the dual space of $C_c(\Omega)$. This space is
    equipped with the total variation norm 
    \begin{align*}
      \norm{\mu}_{\calM} \coloneqq
      \abs{\mu} (\Omega)
      =
      \int_{\Omega} \abs{\mu} (\di x) = \sup \left\{ \int_{\Omega} \phi
      (x) \mu (\di x) \colon \phi \in C_c(\Omega),
      \norm{\phi}_{L^{\infty}(\Omega)} \le 1 \right\}.
    \end{align*}
    We then define  
    \begin{align*}
      \calM (\Omega,\RR^d)
      &\coloneqq \left\{ \mu = \mu_i
        \ee^i \colon \mu_i \in \calM(\Omega), \text{ for all } 1 \le i \le
        d\right\}, \text{ and }\\
      \calM \left(\Omega, X(Y, \RR^d)\right)
      &\coloneqq
        \left\{ \mu \in \calM \left( \Omega \times Y,\RR^d
        \right)\colon \mu (x,\cdot)\in X(Y,\RR^d) \text{ for }x \in \Omega \right\},
    \end{align*}
    where $X(Y,\RR^d)$ is a given space of functions from $Y \to \RR^d$, cf. \cite{ferreiraReiteratedHomogenizationBV2012,ferreiraCharacterizationMultiscaleLimit2012,amarTwoscaleConvergenceHomogenization1998}.
  \item $\mathrm{BV}(\Omega, \RR^d)$ -- the space of $d-$dimensional
    vector valued $L^1$- functions, whose Jacobians (in distributional
    sense) are $d\times d-$matrices of finite Radon measures on
    $\Omega$, i.e, $\uu = u_i \ee^i \in \mathrm{BV} (\Omega, \RR^d)$
    iff $u_i \in L^1(\Omega)$ and its distributional derivative
    $\frac{\partial u_i}{\partial x_j} \in \calM (\Omega)$ for all
    $1 \le i,j \le d$. We say a sequence of distributions
    $\left\{ T_n \right\}$ in $\calD'(\Omega)$ converges to $T$ in
    distributional sense if $\left\langle T_n, \varphi \right\rangle
    \cv \left\langle T_n,\varphi \right\rangle$ for any $\varphi \in \calD(\Omega)$.

\end{itemize}

Fix $p > 1$ and  $0 \le \alpha \le \min \left\{ 1, p-1\right\}$. 
Suppose
$\bfa\colon Y \times \RR^d \to \RR^d$ satisfies
\begin{enumerate}[label=(A{\arabic*}),ref=\textnormal{(A{\arabic*})}]
\item \label{cond:a-periodic} Measurability and $Y-$periodicity: for
  any $\xi \in \RR^d$, the function $\xi \mapsto \bfa(\cdot, \xi)$ is
  measurable and
\begin{equation*}
\bfa (z + m \ee^k, \xi) = \bfa (z,\xi),
\end{equation*}
for all $z \in \RR^d$, $m \in \ZZ$, and $k \in \{ 1,\ldots, d \}$.

\item\label{cond:a-bound} Boundedness: for all $z \in Y$, there exists
  $\Lambda_*>0$, such that
\begin{equation*}
\abs{\bfa (z,0)}  \le \Lambda_{*}.
\end{equation*}

\item\label{cond:a-continuity} Continuity: for a.e. $z \in Y$, there
  exists $\Lambda_o > 0$ such that the function 
  $\xi \mapsto \bfa \left( z, \xi \right)$ satisfies
\begin{align*}
  \abs{\bfa(z, \xi_1) - \bfa(z, \xi_2)} 
  \le \Lambda_o \left( 1 + \abs{\xi_1}^2 + \abs{\xi_2}^2 \right)^{\frac{p-1-\alpha}{2}} \abs{\xi_1-\xi_2}^{\alpha},
\end{align*}
for all $\xi_1, \xi_2 \in Y$, where $\alpha$ is described above.

\item\label{cond:a-monotonicity} Monotonicity: there exists $\lambda_o > 0$
  such that for a.e. $z \in Y$, we have
\begin{align*}
\left[ \bfa(z, \xi_1) - \bfa (z, \xi_2) \right] \cdot (\xi_1-\xi_2)
  \ge \lambda_o \left( 1 + \abs{\xi_1}^2 + \abs{\xi_2}^2
  \right)^{\frac{p-2}{2}} \abs{\xi_1-\xi_2}^{2},
\end{align*}
for all $\xi_1, \xi_2 \in \RR^d$.
\end{enumerate}



We also introduce a few more definitions that are necessary for the elasticity
equation. For $\uu \in H^1(\Omega,\RR^d)$, we define the symmetric
gradient, also known as the {\it linearized strain tensor},
\begin{align}
\label{eq:6}
\DD(\uu) \coloneqq \frac{\nabla \uu + \nabla \uu^{\top}}{2}.
\end{align}
Let $0 < \lambda_e < \Lambda_e < \infty$. We denote by
$\fraM(\lambda_e, \Lambda_e)$ the set of all fourth-order tensors
$\bfB = \left( \bfB_{ijkh} \right)_{1 \le i,j,k,h \le d}$ satisfying

\begin{enumerate}[label=(B{\arabic*}),ref=\textnormal{(B{\arabic*})}]
  \item\label{cond:b-bound} Boundedness and measurability: there exists $\Lambda_e> 0$ such that
\begin{equation*}
\norm{\bfB}_{L^{\infty}} \le \Lambda_e.
\end{equation*}
  \item\label{cond:b-elliptic} Ellipticity: there exists
    $\lambda_e > 0$ such that, for all $d\times d-$matrices $\bfc$ and for all $x \in
    \RR^d$, we have
\begin{equation*}
\bfB (x) \bfc : \bfc \ge \lambda_e \abs{\bfc}^2,
\end{equation*}
where ``$:$" represents the Frobenius inner product.
\end{enumerate}
Here, we recall that for two matrices $\bfc$ and $\bfd$,
$\bfB \bfc \coloneqq \left( (\bfB_{ijkh} \bfc_{kh})_{ij} \right)_{1
  \le i,j \le d}$ and
$\bfB \bfc : \bfd \coloneqq \bfB_{ijkh} \bfc_{ij} \bfd_{kh}$, with $1\leq i,j,k,h\leq d$.  We say
that a fourth-order tensor $\bfB$ is symmetric if
$\bfB_{ijkh} = \bfB_{jikh}=\bfB_{ijhk}$ for any $1 \le i,j,k,h \le
d$.
We denote by $\fraM_{\per}(\lambda_e,\Lambda_e)$ the subset of
$\fraM(\lambda_e,\Lambda_e)$ consisting of $Y-$periodic tensors, and by $ \fraM_{\sym}(\lambda_e,\Lambda_e)$ the subset of
$\fraM(\lambda_e,\Lambda_e)$ consisting of symmetric tensors.
We define
$\fraM_{\mat} \coloneqq \fraM_{\sym} \cap \fraM_{\per}$.

\subsubsection{The two-scale convergence method}
\label{sec:two-scale-conv}

The proof of the main result in this paper will be based on the theory of {\it two-scale
  convergence} that was first introduced  by G. Nguetseng \cite{nguetsengGeneralConvergenceResult1989} and further
developed by G. Allaire \ynote{\cite{allaireHomogenizationTwoscaleConvergence1992}}.  In this section, we present important
definitions and results which are relevant to this paper, and whose
proofs can be found in
\cite{allaireHomogenizationTwoscaleConvergence1992,nguetsengGeneralConvergenceResult1989,amarTwoscaleConvergenceHomogenization1998,ferreiraReiteratedHomogenizationBV2012,davoliHomogenizationBVModel2021
,lukkassenTwoscaleConvergence2002,visintinTwoscaleCalculus2006,cioranescuPeriodicUnfoldingHomogenization2002}.

\begin{definition}[$L^p$-admissible test function]
\label{def:admissible-function}
Let $1 \le p < + \infty$. A function $\psi \in
L^p(\Omega \times Y)$,  $Y$-periodic in the second variable, is called an $L^p$-admissible test function if, for
all $\varepsilon > 0$,
$\psi \left( \cdot, \frac{\cdot}{\varepsilon} \right)$ is measurable and
\begin{align}
\label{eq:34}
\lim_{\varepsilon \to 0} \int_{\Omega} \abs{\psi \left( x,
  \frac{x}{\varepsilon} \right)}^p \di x = \frac{1}{\abs{Y}}
  \int_{\Omega} \int_Y \abs{\psi (x,y)}^p \di y \di x.
\end{align}
\end{definition}

It is known that functions that belong to the spaces $\mathcal{D} \left(\Omega,
  C_\per^\infty (Y)\right)$, $C \left( \bar{\Omega}, C_{\per}(Y)
\right)$, $L^p_{\per}\left( Y,  C(\bar{\Omega})\right)$ or $L^p\left(\Omega, C_{\per}(Y)\right)$ are admissible \cite{allaireHomogenizationTwoscaleConvergence1992}, but the precise
characterization of admissible test functions is still an
open question.

\begin{definition}[Two-scale convergence]
  \label{def:two-scale}
A sequence $\{ v^\varepsilon \}_{\varepsilon>0}$ in $L^p(\Omega)$
 ($\calM (\Omega)$, respectively) is said to \emph{(weakly) two-scale converge}
 to $v = v(x,y)$ in $L^p (\Omega \times Y)$ ($\calM (\Omega)$, respectively), with $v \in L^p (\Omega \times Y)$ ($\calM (\Omega)$, respectively), and we write $v^\varepsilon \wcv[2] v$ in $L^p (\Omega \times Y)$, if and only if:
\begin{align}
\label{eq:2sc}
    \lim_{\varepsilon \to 0} \int_\Omega v^\varepsilon(x) \psi \left( x, \frac{x}{\varepsilon}\right) \di x 
    = \frac{1}{\abs{Y}} \int_\Omega \int_Y v(x,y) \psi(x,y) \di y \di x,
\end{align}
for any test function $\psi = \psi (x, y)$ with $\psi \in
\mathcal{D} \left(\Omega, C_\per^\infty (Y)\right)$.

In particular, if $v^{\varepsilon} \wcv[2] v$ and
$\norm{v^{\varepsilon}}_{L^p(\Omega)} \cv \norm{v}_{L^p(\Omega \times
  Y)}$, then we say that $v^{\varepsilon}$ \emph{strongly two-scale
  converges} to $v$ in $L^p (\Omega \times Y)$, and we write $v^{\varepsilon} \cv[2] v$ in $L^p (\Omega \times Y)$.
\end{definition}
We note that any bounded sequence $v^{\varepsilon}\in L^p(\Omega)$, with $1 < p < \infty$, ($\calM (\Omega)$, respectively) has a
subsequence that two-scale converges to a limit
$v^0 \in L^p(\Omega \times Y)$ ($\calM (\Omega \times Y)$,
respectively), cf. \cite{allaireHomogenizationTwoscaleConvergence1992,nguetsengGeneralConvergenceResult1989,amarTwoscaleConvergenceHomogenization1998,ferreiraReiteratedHomogenizationBV2012,davoliHomogenizationBVModel2021,ferreiraCharacterizationMultiscaleLimit2012,visintinTwoscaleCalculus2006,cioranescuPeriodicUnfoldingHomogenization2002,lukkassenTwoscaleConvergence2002}.

The strong two-scale convergence plays an important role in
establishing corrector results in homogenization, cf. \cite[Theorem
1.8]{allaireHomogenizationTwoscaleConvergence1992} and \cite[Theorem 11]{lukkassenTwoscaleConvergence2002}:

\begin{lemma}
\label{sec:two-scale-corrector}
Let $\left\{ v^{\varepsilon} \right\}_{\varepsilon > 0}$ be a sequence
in $L^p(\Omega)$ that strongly two-scale converges to $v \in
L^p(\Omega \times Y)$. Suppose further that $v$ is admissible, in the
sense of \cref{def:admissible-function}. Then 
\begin{align}
\label{eq:19}
\lim_{\varepsilon \to 0} \norm{v^{\varepsilon}(x) - v \left( x,
  \frac{x}{\varepsilon} \right)}_{L^p(\Omega)} = 0.
\end{align}
\end{lemma}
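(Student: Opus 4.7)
I would establish $\|v^\varepsilon - v(\cdot, \cdot/\varepsilon)\|_{L^p(\Omega)} \to 0$ by combining three pieces of convergence: (i) $\norm{v^\varepsilon}_{L^p(\Omega)}^p \to \abs{Y}^{-1}\norm{v}_{L^p(\Omega\times Y)}^p$, built into the definition of strong two-scale convergence; (ii) $\norm{v(\cdot, \cdot/\varepsilon)}_{L^p(\Omega)}^p \to \abs{Y}^{-1}\norm{v}_{L^p(\Omega\times Y)}^p$, granted by admissibility of $v$ (\cref{def:admissible-function} applied with $\psi = v$); and (iii) the identification of the weak two-scale limit of $(v^\varepsilon + v(\cdot, \cdot/\varepsilon))/2$ as $v$ itself.

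\textbf{Case $p=2$.} Expand
\begin{equation*}
\int_\Omega \abs{v^\varepsilon - v(\cdot, \cdot/\varepsilon)}^2 \di x = \norm{v^\varepsilon}_{L^2(\Omega)}^2 - 2\int_\Omega v^\varepsilon\, v(\cdot, \cdot/\varepsilon) \di x + \norm{v(\cdot, \cdot/\varepsilon)}_{L^2(\Omega)}^2.
\end{equation*}
By (i) and (ii) the first and third terms each converge to $\abs{Y}^{-1}\norm{v}_{L^2(\Omega \times Y)}^2$; by (iii), with $v$ serving as an admissible test function in $v^\varepsilon \wcv[2] v$, the cross term converges to $-2\abs{Y}^{-1}\norm{v}_{L^2(\Omega \times Y)}^2$. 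The three limits cancel.

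\textbf{Case $1 < p < \infty$.} Since the expansion of $\abs{a-b}^p$ is no longer algebraic, I would invoke Clarkson's inequality (the first form for $p\ge 2$, its dual for $1 < p < 2$) with $f = v^\varepsilon$ and $g = v(\cdot, \cdot/\varepsilon)$. For $p \ge 2$ this reads
\begin{equation*}
\norm{\tfrac{v^\varepsilon - v(\cdot, \cdot/\varepsilon)}{2}}_{L^p(\Omega)}^p \le \tfrac{1}{2}\big(\norm{v^\varepsilon}_{L^p(\Omega)}^p + \norm{v(\cdot, \cdot/\varepsilon)}_{L^p(\Omega)}^p\big) - \norm{\tfrac{v^\varepsilon + v(\cdot, \cdot/\varepsilon)}{2}}_{L^p(\Omega)}^p.
\end{equation*}
The first two right-hand-side terms both tend to $\abs{Y}^{-1}\norm{v}_{L^p(\Omega \times Y)}^p$ via (i) and (ii), while the weak lower semicontinuity of the $L^p$-norm under weak two-scale convergence, combined with (iii), gives $\liminf_\varepsilon \norm{(v^\varepsilon + v(\cdot, \cdot/\varepsilon))/2}_{L^p(\Omega)}^p \ge \abs{Y}^{-1}\norm{v}_{L^p(\Omega \times Y)}^p$. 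Taking $\limsup$ forces the left-hand side to zero; the $1 < p < 2$ case is identical after raising norms to $p' = p/(p-1)$ and using the dual Clarkson inequality.

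\textbf{Main obstacle.} The subtle step is (iii), i.e.\ that $v$ qualifies as a test function in $v^\varepsilon \wcv[2] v$, or equivalently that $v(\cdot, \cdot/\varepsilon) \wcv[2] v$. This requires a density argument: approximate $v$ by smooth $\phi^k \in \mathcal{D}(\Omega, C^\infty_\per(Y))$ in $L^p(\Omega \times Y)$. For such $\phi^k$, the convergence $\phi^k(\cdot, \cdot/\varepsilon) \wcv[2] \phi^k$ is classical via periodic averaging. The residue $\int_\Omega (v - \phi^k)(x, x/\varepsilon)\, \psi(x, x/\varepsilon)\, \di x$ is controlled by Hölder's inequality together with the bound $\limsup_\varepsilon \norm{(v - \phi^k)(\cdot, \cdot/\varepsilon)}_{L^p(\Omega)} \le C\abs{Y}^{-1/p}\norm{v - \phi^k}_{L^p(\Omega \times Y)}$, itself extracted from (i), (ii), and a Brezis-Lieb-type domination using admissibility of $\abs{v}^p$ and of $\abs{\phi^k}^p$.
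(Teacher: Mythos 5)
The paper itself does not prove this lemma: it is quoted from the literature (Allaire's Theorem 1.8 for $p=2$ and Lukkassen--Nguetseng--Wall's Theorem 11 for general $p$), so the comparison is with those standard proofs. Your overall strategy is essentially that standard route: the quadratic expansion for $p=2$, and for general $p$ Clarkson's inequalities combined with the lower semicontinuity $\liminf_{\varepsilon}\norm{w^{\varepsilon}}_{L^p(\Omega)}\ge \abs{Y}^{-1/p}\norm{w}_{L^p(\Omega\times Y)}$ along weak two-scale convergence; your pieces (i) and (ii) are immediate from \cref{def:two-scale} and \cref{def:admissible-function}, and the exponent bookkeeping in the Clarkson step (including the case $1<p<2$) checks out.

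The genuine gap is in your step (iii), which is the real content of the lemma: you need $v\left(\cdot,\tfrac{\cdot}{\varepsilon}\right)\wcv[2] v$, and you propose to obtain the key bound $\limsup_{\varepsilon}\norm{(v-\phi^k)\left(\cdot,\tfrac{\cdot}{\varepsilon}\right)}_{L^p(\Omega)}\le C\norm{v-\phi^k}_{L^p(\Omega\times Y)}$ from (i), (ii) and a ``Brezis--Lieb-type domination''. This cannot be extracted from those ingredients. Admissibility is an equality of limits for the single function $v$; the class in \cref{def:admissible-function} is not known to be stable under sums or differences (this is exactly the open characterization issue the paper mentions), and a Brezis--Lieb argument would need a.e.\ convergence of the (unfolded) traces, which you do not have. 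In fact no argument using only the norm identities (i)--(ii) can identify the weak two-scale limit of the traces: \eqref{eq:34} constrains only $\int_{\Omega}\abs{v(x,x/\varepsilon)}^p\di x$, and one may modify an admissible $v$ on the null set $\bigcup_n\{(x,\{x/\varepsilon_n\}_Y):x\in\Omega\}$ --- e.g.\ replace the constant $v\equiv c$ there by $c\,\rho(y)$ with $\abs{\rho}=1$ and $\int_Y\rho\,\di y=0$ --- so that the modified representative still satisfies \eqref{eq:34} for every $\varepsilon$, yet $v(\cdot,\cdot/\varepsilon_n)$ two-scale converges to $c\rho(y)\neq v$ and \eqref{eq:19} fails along $\varepsilon_n$ for $v^{\varepsilon}\equiv c$. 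So the implication ``admissible $\Rightarrow v(\cdot,\cdot/\varepsilon)\wcv[2]v$'' is precisely what must be supplied, and it requires more structure than the norm identity: it is classical when $v$ lies in one of the concrete classes listed after \cref{def:admissible-function} (e.g.\ $L^p\left(\Omega,C_{\per}(Y)\right)$ or $L^p_{\per}\left(Y,C(\bar\Omega)\right)$, where continuity in one variable lets you compare the trace of $v$ pointwise with that of a smooth approximant), or one avoids the issue altogether by working with the averaged corrector $\MM^{\varepsilon}v$ as in \cref{sec:two-scale-conv-5} --- which is exactly how the paper formulates \eqref{eq:44} before invoking admissibility for \eqref{eq:23}. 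If you either restrict $v$ to such a class, or build the two-scale convergence of the traces into the notion of admissibility (as some authors do), the remainder of your argument goes through.
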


For $v \in L^p(\Omega \times Y)$, the function $v \left( x,\frac{x }{\varepsilon} \right)$ is not necessarily a measurable function
\cite{allaireHomogenizationTwoscaleConvergence1992,visintinTwoscaleCalculus2006}. This explains why it is required for $v$ to be admissible in
\cref{sec:two-scale-corrector}. We can circumvent this assumption by
introducing the so-called \emph{coarse-scale averaging operator}
$\MM^{\varepsilon}$, which will be defined next. For each
$z \in \RR^d$, let $[z]_Y$ be the integer part of $z$, more precisely,
$[z]_Y \coloneqq k_i \ee^i$ with $k_i \in \ZZ$ such that
$\left\{ z \right\}_Y \coloneqq z - [z]_Y \in Y$. For each function
$v \in L^p(\Omega \times Y)$, let
\begin{align}
\label{eq:22}
  (\MM^{\varepsilon} v) (x,y)
  \coloneqq \int_Y v \left( \varepsilon\left[ \frac{x}{\varepsilon}
  \right]_Y + \varepsilon \xi,y \right) \di \xi, \quad \text{for } (x,y)
  \in \Omega \times Y.
\end{align}
It is known that $\MM^{\varepsilon}$ is a bounded linear operator and that
$(\MM^{\varepsilon} v) (x,y)$ and $(\MM^{\varepsilon} v)
\left( x, \frac{x}{\varepsilon} \right)$ are both measurable
\cite{visintinTwoscaleCalculus2006}. The following result is due to Visintin \cite[Proposition
2.3]{visintinTwoscaleCalculus2006}:
\begin{lemma}
\label{sec:two-scale-conv-5}
A sequence $\left\{ v^{\varepsilon} \right\}_{\varepsilon > 0} \subset
L^p(\Omega)$ strongly two-scale converges to $v \in L^p(\Omega \times
Y)$ if and only if 
\begin{align*}
\lim_{\varepsilon \to 0} \norm{v^{\varepsilon}(x) - \left(
  \MM^{\varepsilon} v \right) \left( x,
  \frac{x}{\varepsilon} \right)}_{L^p(\Omega)} = 0.
\end{align*}
In particular, if $v$ is admissible, then the operator
$\MM^{\varepsilon}$ can be dropped.
\end{lemma}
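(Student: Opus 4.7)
The plan is to reduce both directions of the equivalence to the following intermediate claim: \emph{for every $v \in L^p(\Omega \times Y)$, the sequence $(\MM^\varepsilon v)(\cdot, \cdot/\varepsilon)$ strongly two-scale converges to $v$.} Given this, the ``if'' direction follows by a soft perturbation argument, and the final sentence about admissible $v$ is an immediate byproduct of the continuous case treated below. The nontrivial step is the ``only if'' direction, which requires passing from weak two-scale convergence to strong $L^p$ convergence via uniform convexity.

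To establish the intermediate claim, I first treat the case of admissible $v \in C(\bar\Omega, C_\per(Y))$. Since $\varepsilon[x/\varepsilon]_Y + \varepsilon\xi$ lies within $O(\varepsilon)$ of $x$ for every $\xi \in Y$, uniform continuity of $v$ on $\bar\Omega \times \bar Y$ yields $(\MM^\varepsilon v)(x, y) \to v(x, y)$ uniformly in $(x, y)$. Hence $\|(\MM^\varepsilon v)(\cdot, \cdot/\varepsilon) - v(\cdot, \cdot/\varepsilon)\|_{L^\infty(\Omega)} \to 0$, and combining with the elementary fact that $v(\cdot, \cdot/\varepsilon) \cv[2] v$ strongly for admissible $v$ proves the claim on the dense subspace of admissible functions. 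To extend to arbitrary $v \in L^p(\Omega \times Y)$, a cellwise application of Jensen's inequality gives the contraction
\begin{align*}
  \|(\MM^\varepsilon w)(\cdot, \cdot/\varepsilon)\|_{L^p(\Omega)} \le \|w\|_{L^p(\Omega \times Y)},
\end{align*}
so a standard $3\delta$-argument transfers both the weak two-scale convergence and the norm convergence from a dense sequence of admissible approximants $v_n \to v$ in $L^p(\Omega \times Y)$ to $v$ itself.

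The ``if'' direction now follows immediately: if $v^\varepsilon - (\MM^\varepsilon v)(\cdot, \cdot/\varepsilon) \to 0$ in $L^p(\Omega)$, then H\"older's inequality against admissible test functions preserves weak two-scale convergence, and the reverse triangle inequality transfers the convergence $\|(\MM^\varepsilon v)(\cdot, \cdot/\varepsilon)\|_{L^p(\Omega)} \to \|v\|_{L^p(\Omega \times Y)}$ to $\|v^\varepsilon\|_{L^p(\Omega)}$, so $v^\varepsilon$ strongly two-scale converges to $v$.

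The main obstacle is the ``only if'' direction: setting $w^\varepsilon := v^\varepsilon - (\MM^\varepsilon v)(\cdot, \cdot/\varepsilon)$, linearity of weak two-scale convergence together with the intermediate claim yields $w^\varepsilon \wcv[2] 0$, and the task is to upgrade this to $\|w^\varepsilon\|_{L^p(\Omega)} \to 0$. My plan is to invoke Clarkson's inequality: for $p \ge 2$,
\begin{align*}
  \|w^\varepsilon\|_{L^p}^p
  \le 2^{p-1}\bigl(\|v^\varepsilon\|_{L^p}^p + \|(\MM^\varepsilon v)(\cdot, \cdot/\varepsilon)\|_{L^p}^p\bigr)
  - \|v^\varepsilon + (\MM^\varepsilon v)(\cdot, \cdot/\varepsilon)\|_{L^p}^p.
\end{align*}
The first two terms both tend to $\|v\|_{L^p(\Omega \times Y)}^p$ by hypothesis and the claim, while $v^\varepsilon + (\MM^\varepsilon v)(\cdot, \cdot/\varepsilon) \wcv[2] 2v$. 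The lower semicontinuity of the $L^p$-norm under weak two-scale convergence (a duality consequence of \cref{def:two-scale} tested against admissible $\phi$) then gives the asymptotic lower bound $\liminf_\varepsilon \|v^\varepsilon + (\MM^\varepsilon v)(\cdot, \cdot/\varepsilon)\|_{L^p}^p \ge 2^p\|v\|_{L^p(\Omega \times Y)}^p$, so that $\limsup_{\varepsilon\to 0}\|w^\varepsilon\|_{L^p}^p \le 0$. The range $1 < p < 2$ is handled analogously with the second (dual) Clarkson inequality. The final assertion of the lemma follows because, for admissible $v$, the uniform convergence $\MM^\varepsilon v \to v$ established in the proof of the claim allows $\MM^\varepsilon$ to be dropped without affecting the $L^p$-limit.
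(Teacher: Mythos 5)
The paper does not actually prove this lemma: it is quoted from Visintin \cite{visintinTwoscaleCalculus2006} (Proposition 2.3), and the closest argument in the paper is the proof of \cref{sec:preliminary-results-1}, which runs through the two-scale composition map $S^{\varepsilon}$ (an isometry between $L^p(\Omega)$ and $L^p(\Omega\times Y)$) together with the Radon--Riesz property. Your proposal is therefore an independent proof, and its core is correct, by a genuinely different route. The intermediate claim that $(\MM^{\varepsilon}v)(\cdot,\cdot/\varepsilon)$ strongly two-scale converges to $v$ is sound: uniform continuity handles the dense class $C(\bar{\Omega},C_{\per}(Y))$, and the cellwise Jensen contraction $\norm{(\MM^{\varepsilon}w)(\cdot,\cdot/\varepsilon)}_{L^p(\Omega)}\le \norm{w}_{L^p(\Omega\times Y)}$ (mind the boundary cells, where one extends by zero; this is harmless) lets you pass to general $v\in L^p(\Omega\times Y)$. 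The ``if'' direction is routine, and in the ``only if'' direction the combination of Clarkson's inequalities (first for $p\ge 2$, second for $1<p<2$) with lower semicontinuity of the $L^p(\Omega\times Y)$-norm along weak two-scale convergence plays exactly the role that the $S^{\varepsilon}$-isometry plus Radon--Riesz plays in Visintin's and the paper's argument. What your route buys is a proof entirely inside the two-scale formalism, with no unfolding operator; what it costs is the explicit appeal to uniform convexity and to the (standard, but worth citing) duality proof of two-scale lower semicontinuity.

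The one soft spot is the closing sentence. You justify dropping $\MM^{\varepsilon}$ by the uniform convergence $\MM^{\varepsilon}v\to v$, but that was established only for $v\in C(\bar{\Omega},C_{\per}(Y))$, whereas admissibility in \cref{def:admissible-function} is merely measurability of the traces plus the norm identity \eqref{eq:34}. What is needed is $\norm{v(\cdot,\cdot/\varepsilon)-(\MM^{\varepsilon}v)(\cdot,\cdot/\varepsilon)}_{L^p(\Omega)}\to 0$ for admissible $v$; within your own framework the clean fix is to apply the already-proved ``only if'' direction to the particular sequence $v^{\varepsilon}\coloneqq v(\cdot,\cdot/\varepsilon)$, which requires knowing that $v(\cdot,\cdot/\varepsilon)\wcv[2] v$ (its norm convergence being precisely \eqref{eq:34}). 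That weak two-scale convergence holds for the concrete admissible classes listed after \cref{def:admissible-function}, but it does not follow formally from \eqref{eq:34} alone, since the trace $v(x,x/\varepsilon)$ depends on the chosen representative of $v$; so either invoke that known fact for those classes or state the final assertion for them. This is a repair of one sentence; the main equivalence as you prove it stands.
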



\subsection{Formulation of the problem: the fine-scale coupled system}
\label{sec:fine-scale-coupled}
In this section, we set up the fine-scale problem.  Denote by 
$ \bfC\in\fraM_{\per}(\lambda_e,\Lambda_e)$ the {\it electrostriction} tensor and by $\bfB \in\fraM_{\mat}(\lambda_e,\Lambda_e)$ the {\it elasticity} tensor. 
We assume further
that $Y$ is a disjoint union of finite subdomains with piecewise $C^{1,\alpha}-$boundaries and $\bfB$ is
H\"{o}lder continuous on the closure of each subdomain.
Let
$\bfa\colon Y \times \RR^d \to \RR^d$ satisfying
\ref{cond:a-periodic}--\ref{cond:a-monotonicity}, which pertains to a {(nonlinear)} constitutive law between the electric displacement
and the electric field. Furthermore, let $\bg \in L^r(\Omega,\RR^d)$
for some $r > 1$, and $f \in L^{p'}(\Omega)$, where $p'$ is the
H\"{o}lder conjugate of $p$, i.e. $\frac{1}{p}+\frac{1}{p'}=1$. Then
the fine-scale displacement $\uu^{\varepsilon} \in \mathrm{BV} (\Omega,\RR^d)$
and the electrostatic potential
$\varphi^{\varepsilon} \in W_0^{1,p}(\Omega)$ satisfy the following
coupled system
\cite{tianDielectricElastomerComposites2012,francfortEnhancementElastodielectricsHomogenization2021,toupinElasticDielectric1956}:
\begin{subequations}
  \label{eq:p410} 
  \begin{align}
    \label{eq:p421}
  -\Div \left[ \bfa \left( \frac{x}{\varepsilon}, \nabla
  \varphi^{\varepsilon}  \right)\right]
  &=  f && \text{ in }\Omega,\\
\label{eq:p411}
  -\Div \left[ \bfB \left( \frac{x}{\varepsilon} \right) \nabla
  \uu^{\varepsilon} + 
  \bfC \left( \frac{x}{\varepsilon} \right) \left( \nabla \varphi^{\varepsilon} \otimes \nabla \varphi^{\varepsilon} \right) \right]
  &= \bg && \text{ in } \Omega,
\end{align}
\end{subequations}
together with the balance equations:
\begin{subequations}
 \label{eq:p413}
\begin{align}
  \label{eq:p414}
  \int_{\Gamma_{i}^{\varepsilon}} \left\llbracket
  \bfB \left( \frac{x}{\varepsilon} \right) \DD (\uu^{\varepsilon}) + 
  \bfC \left( \frac{x}{\varepsilon} \right) \left( \nabla \varphi^{\varepsilon} \otimes \nabla \varphi^{\varepsilon} \right)\right\rrbracket \nn_i\di \hmeas
  &=0,\\
  \label{eq:p422}
  \int_{\Gamma_{i}^{\varepsilon}} \left\{ \left\llbracket
  \bfB \left( \frac{x}{\varepsilon} \right) \DD
  (\uu^{\varepsilon}) + 
   \bfC \left( \frac{x}{\varepsilon} \right) \left( \nabla \varphi^{\varepsilon} \otimes \nabla \varphi^{\varepsilon} \right)\right\rrbracket\nn_i \right\} \times \nn_i\di
  \hmeas
  &=0,
\end{align}
\end{subequations}
and the boundary conditions:
\begin{subequations}
  \label{eq:p415} 
  \begin{align}
     \label{eq:p423}
  \varphi^{\varepsilon}
  &= 0, \text{ on }\partial \Omega,\\
\label{eq:p416}
  \uu^{\varepsilon}
  &= 0, \text{ on }\partial \Omega.
\end{align}
\end{subequations}
Here, $\left\llbracket \, \cdot \, \right\rrbracket$ denotes the jump
on the interfaces $\Gamma^{\varepsilon}_i$.

Next, we introduce the variational formulation for \eqref{eq:p410}-\eqref{eq:p415}. To
simplify the notation, we define 
\begin{align*}
  \maxss^{\varepsilon}
  \coloneqq \nabla \varphi^{\varepsilon} \otimes \nabla \varphi^{\varepsilon},
\end{align*}
which will be called the {\it Maxwell stress tensor} (it is worth
mentioning that the most general version of the Maxwell stress tensor has three
additional terms \cite{zangwillModernElectrodynamics2013}).  Then, we seek for
$\varphi^{\varepsilon} \in W_0^{1,p}(\Omega)$ and
$\uu^{\varepsilon} \in \mathrm{BV}(\Omega,\RR^d)$ such that, for all
$\eta \in W^{-1,p'}(\Omega)$ and
$\vv \in \calD(\Omega, \RR^d)$, the following holds
\begin{align}
  \label{eq:r5}
  \begin{split}
    \int_{\Omega} \bfa \left( \frac{x}{\varepsilon}, \nabla
  \varphi^{\varepsilon}  \right) \ynote{\cdot} \nabla \eta \di x
    + \int_{\Omega} \bfB \left( \frac{x}{\varepsilon} \right) \DD(\uu^{\varepsilon}) : \DD(\vv)
    \di x 
    +
    \int_{\Omega} \bfC \left( \frac{x}{\varepsilon} \right) \, \maxss^{\varepsilon}  :   \DD(\vv)\di x\\
    =  \int_{\Omega} \bg \cdot \vv \di x 
    + \int_{\Omega} f \, \eta \di x.
\end{split}
\end{align}

By setting $\vv = 0$, we seek for a unique solution
$\varphi^{\varepsilon}  \in W_0^{1,p}(\Omega)$ of the electrostatic problem
\begin{align}
  \label{eq:r6}
  \int_{\Omega} \bfa \left( \frac{x}{\varepsilon}, \nabla
  \varphi^{\varepsilon}  \right) \cdot \nabla \eta \di x
  =\int_{\Omega} f \, \eta \di x, \quad
  \forall \eta \in W^{-1,p'}(\Omega).
\end{align}

By setting $\eta = 0$, we seek for $\uu^{\varepsilon} \in \mathrm{BV}(\Omega,\RR^d)$ such
that
\begin{align}
\label{eq:r7}
\int_{\Omega} \bfB \left( \frac{x}{\varepsilon} \right) \, \DD(\uu^{\varepsilon}) : \DD(\vv)
  \di x 
  =\int_{\Omega} \bg \cdot \vv \di x
  - \int_{\Omega} \bfC \left( \frac{x}{\varepsilon} \right) \,
  \maxss^{\varepsilon} : \DD(\vv)\di x, \quad \forall \, \vv \in \calD(\Omega, \RR^d).
\end{align}

\section{Main results}
\label{sec:main-result}

This section is dedicated to presenting the main outcomes of this paper. Before we state our main theorem, which provides the homogenization result for the coupled system \eqref{eq:p410}, we will introduce a \emph{first-order corrector} for the solution $\varphi^{\varepsilon} \in
W_0^{1,p}(\Omega)$ of the electrostatic equation.

To begin, we consider the homogenization of the electrostatic equation
using its variational formulation \eqref{eq:r6}. The proof can
  be found in \cref{sec:proof-crefs-results-2}.
\begin{proposition}
  \label{sec:main-results-7}
  Let $\varphi^{\varepsilon} \in W_0^{1,p}(\Omega)$ be the unique solution of
  \eqref{eq:r6}. There exist $\varphi^0 \in W_0^{1,p}(\Omega)$ and $\varphi^1 \in
L^p(\Omega, W_{\per}^{1,p}(Y)/\RR)$ such that 
\begin{align}
\label{eq:4}
\varphi^{\varepsilon} \wcv[2] \varphi^0, \text{ and } \nabla
  \varphi^{\varepsilon} \wcv[2] \nabla \varphi^0 + \nabla_y \varphi^1,
\end{align}
where the two-scale limits $\varphi^0$
and $\varphi^1$ satisfy the
following system
\begin{subequations}
  \label{eq:31}
  \begin{align}
    \label{eq:26}
    -\Div \left[ \frac{1}{\abs{Y}}\int_Y \bfa\left(y, \nabla \varphi^0 +
    \nabla_{\yy} \varphi^1 \right)\di \yy \right]
    &= f, \text{ in } \Omega,\\
    \label{eq:28}
    -\Div_{\yy}  \bfa \left(y, \nabla \varphi^0 + \nabla_{\yy}
    \varphi^1\right) 
    &=0, \text{ in } \Omega \times Y.
  \end{align}
\end{subequations}
\end{proposition}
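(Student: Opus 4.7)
The plan is to follow the classical Tartar--Murat--Allaire strategy for periodic homogenization of monotone nonlinear divergence operators, adapted to the two-scale convergence framework: (i) derive a priori bounds and extract two-scale limits, (ii) pass to the limit in the weak formulation against suitable oscillating test functions, (iii) identify the nonlinear limit flux via Minty's monotonicity trick. For Step (i), I would test \eqref{eq:r6} with $\eta=\varphi^\varepsilon$; using \ref{cond:a-monotonicity} with $\xi_2=0$ together with the bound \ref{cond:a-bound} on $\bfa(\cdot,0)$ and absorbing lower-order terms via Young's and Poincar\'e's inequalities yields a uniform estimate $\|\varphi^\varepsilon\|_{W_0^{1,p}(\Omega)}\le C$. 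The continuity condition \ref{cond:a-continuity} in turn gives a uniform $L^{p'}(\Omega,\RR^d)$ bound on the flux $\bfa(\cdot/\varepsilon,\nabla\varphi^\varepsilon)$. Two-scale compactness for bounded $W^{1,p}$-sequences then produces, along a subsequence,
\[
\varphi^\varepsilon\wcv[2]\varphi^0,\quad \nabla\varphi^\varepsilon\wcv[2]\nabla\varphi^0+\nabla_y\varphi^1,\quad \bfa(\cdot/\varepsilon,\nabla\varphi^\varepsilon)\wcv[2]\xi^0,
\]
with $\varphi^0\in W_0^{1,p}(\Omega)$, $\varphi^1\in L^p(\Omega,W^{1,p}_{\per}(Y)/\RR)$, and some $\xi^0\in L^{p'}(\Omega\times Y,\RR^d)$.

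For Step (ii), I would insert oscillating test functions of the form $\eta_\varepsilon(x)=\eta(x)+\varepsilon\eta_1(x,x/\varepsilon)$ with $\eta\in\calD(\Omega)$ and $\eta_1\in\calD(\Omega,C_{\per}^\infty(Y))$ into \eqref{eq:r6}. Using \eqref{eq:2sc} together with the $L^{p'}$-bound on the flux, the $\varepsilon\nabla_x\eta_1$ term vanishes in the limit and I arrive at
\[
\frac{1}{|Y|}\int_\Omega\!\int_Y \xi^0(x,y)\cdot\bigl(\nabla\eta(x)+\nabla_y\eta_1(x,y)\bigr)\,dy\,dx=\int_\Omega f\,\eta\,dx.
\]
Choosing $\eta_1\equiv 0$ produces \eqref{eq:26}, and $\eta\equiv 0$ produces \eqref{eq:28}, both with $\xi^0(x,y)$ in place of $\bfa(y,\nabla\varphi^0+\nabla_y\varphi^1)$. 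It therefore only remains to identify $\xi^0$ with the latter expression.

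Step (iii) is the principal obstacle: the nonlinearity of $\bfa$ rules out any direct passage to the limit, so I would invoke Minty's trick. For arbitrary $\Phi\in\calD(\Omega,\RR^d)$ and $\psi\in\calD(\Omega,C^\infty_{\per}(Y,\RR^d))$, the monotonicity \ref{cond:a-monotonicity} yields
\[
\int_\Omega \bigl[\bfa(x/\varepsilon,\nabla\varphi^\varepsilon)-\bfa(x/\varepsilon,\Phi+\psi(\cdot,\cdot/\varepsilon))\bigr]\cdot\bigl[\nabla\varphi^\varepsilon-\Phi-\psi(\cdot,\cdot/\varepsilon)\bigr]\,dx\ge 0.
\]
Expanding this integral and passing to the limit requires three ingredients: (a) the energy identity $\int_\Omega \bfa(x/\varepsilon,\nabla\varphi^\varepsilon)\cdot\nabla\varphi^\varepsilon\,dx=\int_\Omega f\,\varphi^\varepsilon\,dx\to\int_\Omega f\,\varphi^0\,dx$, which via the macroscopic limit equation equals $\frac{1}{|Y|}\int_\Omega\!\int_Y\xi^0\cdot(\nabla\varphi^0+\nabla_y\varphi^1)\,dy\,dx$; (b) admissibility of the mapping $(x,y)\mapsto \bfa(y,\Phi(x)+\psi(x,y))$ (a Carath\'eodory consequence of \ref{cond:a-continuity}), which ensures that $\bfa(\cdot/\varepsilon,\Phi+\psi(\cdot,\cdot/\varepsilon))$ two-scale converges to $\bfa(y,\Phi+\psi)$; and (c) a density argument enlarging the class of admissible test fields. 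The limit inequality reads
\[
\frac{1}{|Y|}\int_\Omega\!\int_Y \bigl[\xi^0-\bfa(y,\Phi+\psi)\bigr]\cdot\bigl[\nabla\varphi^0+\nabla_y\varphi^1-\Phi-\psi\bigr]\,dy\,dx\ge 0,
\]
and choosing $\Phi+\psi=\nabla\varphi^0+\nabla_y\varphi^1-t\Theta$ for arbitrary admissible $\Theta$ and $t>0$, dividing by $t$, and letting $t\downarrow 0$ yields the pointwise identification $\xi^0(x,y)=\bfa(y,\nabla\varphi^0+\nabla_y\varphi^1)$. Uniqueness of solutions to \eqref{eq:31}, inherited from the strict monotonicity of the homogenized operator, promotes subsequential convergence to convergence of the full sequence. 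The delicate part of this last step is really (b)--(c): one must ensure $\bfa(x/\varepsilon,\Psi_\varepsilon)$ is measurable and admissible so that \eqref{eq:2sc} applies, and approximate every $\nabla_x\Phi+\nabla_y\psi$ in $L^p(\Omega\times Y,\RR^d)$ by such smooth fields.
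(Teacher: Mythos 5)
Your proposal is correct and follows essentially the same route as the paper: a priori bound via testing with $\varphi^{\varepsilon}$, two-scale compactness for $\varphi^{\varepsilon}$, its gradient and the flux, oscillating test functions $\eta+\varepsilon\eta_1(x,x/\varepsilon)$ to obtain the two-scale system with an unidentified limit flux, and Minty's monotonicity trick with admissible oscillating comparison fields, concluding by uniqueness for the whole sequence. The only cosmetic difference is that the paper builds its comparison field directly as $\nabla(\varphi^0+\varepsilon\eta^1(x,x/\varepsilon))+t\eta(x,x/\varepsilon)$ and then lets $\eta^1\to\varphi^1$ strongly before sending $t\to0$, whereas you use general smooth fields plus a density step — the same argument in substance.
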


We now state the first main result of this paper - an explicit \emph{corrector result}
for the electrostatic problem, whose proof can be found in
\cref{sec:proof-crefs-results}.

\begin{theorem}[Explicit corrector result]
\label{sec:main-results-1}
Suppose $p > 1.$ Let $(\varphi^0,\varphi^1) \in  W_0^{1,p}(\Omega) \times L^p(\Omega,
W^{1,p}_{\per}(Y)/\RR)$ be the solution of \eqref{eq:31}. Then for $1\leq i\leq d$,
\begin{align}
\label{eq:112}
  \frac{\partial \varphi^{\varepsilon}}{\partial x_i}
  \cv[2] \frac{\partial \varphi^0}{\partial x_i} + \frac{\partial
  \varphi^1}{\partial y_i} \qquad \text{ in } L^p(\Omega \times Y).
\end{align}
As a consequence,
\begin{align}
\label{eq:44}
\lim_{\varepsilon \to 0}\norm{\nabla \varphi^{\varepsilon}(x)  -\nabla \varphi^0(x) -
  \MM^{\varepsilon} (\nabla_y \varphi^1)\left( x,\frac{x}{\varepsilon} \right)}_{L^{p}(\Omega,\RR^d)}
  =0.
\end{align}
In particular, if $\varphi^1$ is admissible, then 
\begin{align}
\label{eq:23}
\lim_{\varepsilon \to 0}\norm{\nabla \varphi^{\varepsilon}(x)  -\nabla \varphi^0(x) -
  \nabla_y \varphi^1 \left( x,\frac{x}{\varepsilon} \right)}_{L^{p}(\Omega,\RR^d)}
  =0.
\end{align}


\end{theorem}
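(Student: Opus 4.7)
The plan is a Minty-type monotonicity argument executed at the two-scale level: the weak two-scale convergence of $\nabla\varphi^\varepsilon$ from \cref{sec:main-results-7} is upgraded to strong two-scale convergence by driving the natural ``monotonicity defect'' built from \ref{cond:a-monotonicity} to zero, and then converting this to an $L^p$ bound on the discrepancy. Because $\nabla\varphi^0+\nabla_y\varphi^1$ is not a priori admissible in the sense of \cref{def:admissible-function}, I would first choose, for each $\delta>0$, an approximation $\psi_\delta \in \calD(\Omega, C^\infty_\per(Y, \RR^d))$ with $\psi_\delta \to \nabla\varphi^0 + \nabla_y\varphi^1$ in $L^p(\Omega \times Y, \RR^d)$. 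Such $\psi_\delta$ is admissible, and under \ref{cond:a-bound}--\ref{cond:a-continuity} so is the composition $(x,y) \mapsto \bfa(y, \psi_\delta(x,y))$; hence both $\psi_\delta(x, x/\varepsilon)$ and $\bfa(x/\varepsilon, \psi_\delta(x, x/\varepsilon))$ two-scale converge strongly to their diagonal limits.

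Next, I would form the nonnegative quantity
\begin{equation*}
E^\varepsilon_\delta := \int_\Omega \Bigl[\bfa\bigl(\tfrac{x}{\varepsilon}, \nabla\varphi^\varepsilon\bigr) - \bfa\bigl(\tfrac{x}{\varepsilon}, \psi_\delta(x,\tfrac{x}{\varepsilon})\bigr)\Bigr] \cdot \Bigl[\nabla\varphi^\varepsilon - \psi_\delta(x,\tfrac{x}{\varepsilon})\Bigr] \di x \ge 0,
\end{equation*}
expand it into four pieces, and pass to the limit $\varepsilon \to 0$ in each. The diagonal piece $\int_\Omega \bfa(x/\varepsilon, \nabla\varphi^\varepsilon)\cdot\nabla\varphi^\varepsilon\, \di x$ equals $\int_\Omega f\varphi^\varepsilon\, \di x$ by \eqref{eq:r6} and tends to $\int_\Omega f\varphi^0\, \di x$; testing the two-scale system \eqref{eq:31} with $\varphi^0$ and $\varphi^1$ rewrites this as $\tfrac{1}{|Y|}\iint_{\Omega\times Y} \bfa(y, \nabla\varphi^0+\nabla_y\varphi^1)\cdot(\nabla\varphi^0+\nabla_y\varphi^1)\, \di y\, \di x$. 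The two mixed pieces converge to their two-scale limits via the weak two-scale limit of $\bfa(x/\varepsilon, \nabla\varphi^\varepsilon)$---identified through \eqref{eq:31} as $\bfa(y, \nabla\varphi^0+\nabla_y\varphi^1)$---paired with the strong two-scale convergence above, and the pure $\psi_\delta$ piece converges similarly. Assembling these produces
\begin{equation*}
\lim_{\varepsilon \to 0} E^\varepsilon_\delta = \frac{1}{|Y|}\int_\Omega \int_Y \bigl[\bfa(y, \nabla\varphi^0+\nabla_y\varphi^1) - \bfa(y, \psi_\delta)\bigr]\cdot\bigl[\nabla\varphi^0+\nabla_y\varphi^1 - \psi_\delta\bigr] \di y\, \di x,
\end{equation*}
which vanishes as $\delta \to 0$ by \ref{cond:a-continuity} and dominated convergence.

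Finally, I would harvest \eqref{eq:112} from the lower bound in \ref{cond:a-monotonicity}. When $p \ge 2$, an elementary comparison shows this bound reduces to $E^\varepsilon_\delta \gtrsim \int_\Omega |\nabla\varphi^\varepsilon - \psi_\delta(x, x/\varepsilon)|^p\, \di x$, so a diagonal extraction $\delta = \delta(\varepsilon) \to 0$ combined with the strong two-scale convergence of $\psi_\delta$ yields the strong two-scale convergence of $\nabla\varphi^\varepsilon$, proving \eqref{eq:112}; then \eqref{eq:44} is an immediate consequence of \cref{sec:two-scale-conv-5}, and \eqref{eq:23} is the admissible case. The main obstacle will be the subcritical range $1 < p < 2$, where \ref{cond:a-monotonicity} yields only the weighted estimate $(1+|\xi_1|^2+|\xi_2|^2)^{(p-2)/2}|\xi_1-\xi_2|^2$. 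I would convert this to an $L^p$-bound via the pointwise inequality
\begin{equation*}
|\xi_1-\xi_2|^p \le C\bigl[(1+|\xi_1|^2+|\xi_2|^2)^{(p-2)/2}|\xi_1-\xi_2|^2\bigr]^{p/2}\bigl(1+|\xi_1|^2+|\xi_2|^2\bigr)^{p(2-p)/4},
\end{equation*}
followed by H\"older's inequality with conjugate exponents $2/p$ and $2/(2-p)$, leveraging the uniform $L^p$-bound on $\nabla\varphi^\varepsilon$ supplied by \eqref{eq:r6} together with \ref{cond:a-monotonicity} tested at $\xi_2 = 0$.
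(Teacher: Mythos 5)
Your argument is correct, but it follows a genuinely different route from the paper. You run a Minty/energy argument directly at the two-scale level: approximate $\nabla\varphi^0+\nabla_y\varphi^1$ by smooth admissible $\psi_\delta$, expand the monotonicity defect $E^\varepsilon_\delta$, use the energy identity $\int_\Omega \bfa(\tfrac{x}{\varepsilon},\nabla\varphi^\varepsilon)\cdot\nabla\varphi^\varepsilon\,\di x=\int_\Omega f\varphi^\varepsilon\,\di x$ together with the two-scale system \eqref{eq:31} tested by $(\varphi^0,\varphi^1)$, and convert the defect into an $L^p$ bound via the weighted inequality (your pointwise estimate plus H\"older is exactly the paper's \cref{sec:some-inequalities-1}). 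The paper instead imports the Dal Maso--Defranceschi corrector theorem (\cref{sec:preliminary-results}), works with the cell-problem map $\bfp(y,\xi)=\xi+\nabla_y\eta_\xi$ and the piecewise-averaging operator $M^\varepsilon$, and proves a chain of quantitative estimates (\cref{sec:prel-estim-1}, \cref{sec:preliminary-results-5}, \cref{sec:preliminary-results-4}) to show $\lim_\varepsilon\norm{\nabla\varphi^\varepsilon}_{L^p(\Omega)}=\norm{\nabla\varphi^0+\nabla_y\varphi^1}_{L^p(\Omega\times Y)}$, then obtains componentwise strong two-scale convergence through Visintin's composition $S^\varepsilon$ (\cref{sec:preliminary-results-1}). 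Your route is more self-contained (no external corrector theorem, no $\bfp$, no $M^\varepsilon$ estimates) and in fact yields a stronger intermediate output -- a genuine $L^p(\Omega,\RR^d)$ bound on $\nabla\varphi^\varepsilon-\psi_\delta(x,x/\varepsilon)$ -- so the componentwise statement \eqref{eq:112} comes for free, whereas the paper's norm-convergence argument needs \cref{sec:preliminary-results-1} precisely to recover componentwise information; the paper's route, on the other hand, produces the quantitative control of $\bfp$ and of the averaged corrector that it reuses elsewhere (e.g.\ in \cref{sec:some-inter-corr}). Two small points you should spell out to make the write-up complete: (i) the mixed terms in $E^\varepsilon_\delta$ require the identification of the weak two-scale limit of the flux $\bfa(\tfrac{x}{\varepsilon},\nabla\varphi^\varepsilon)$ as $\bfa(y,\nabla\varphi^0+\nabla_y\varphi^1)$, which is established inside the proof of \cref{sec:main-results-7} (not in its bare statement), together with the observation that $(x,y)\mapsto\bfa(y,\psi_\delta(x,y))$ is a bounded Carath\'eodory function and hence admissible; and (ii) the final passage from $\limsup_\varepsilon\norm{\nabla\varphi^\varepsilon-\psi_\delta(x,\tfrac{x}{\varepsilon})}_{L^p(\Omega,\RR^d)}\le\omega(\delta)$ to \eqref{eq:44} and \eqref{eq:112} needs a short triangle inequality through $\MM^\varepsilon$ (or $S^\varepsilon$), using \cref{sec:two-scale-conv-5} for the fixed admissible function $\psi_\delta$ and the norm bound $\norm{\MM^\varepsilon w(x,\tfrac{x}{\varepsilon})}_{L^p(\Omega)}\le C\norm{w}_{L^p(\Omega\times Y)}$; both are routine with the tools already in the paper.
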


\begin{remark}
  \label{sec:main-results-6}
  Theorem \ref{sec:main-results-1} implies the corrector result for the linear case, i.e.,
  when $\bfa(y, \xi) = \bfb (y) \xi$ for some bounded, elliptic, and
  $Y$-periodic matrix $\bfb$. Indeed,  fine-scale equation 
\begin{align}
\label{eq:114}
-\Div \left( \bfb \left( \frac{x}{\varepsilon} \right) \nabla
  \varphi^{\varepsilon} \right) = f \text{ in }\Omega, \qquad
  \varphi^{\varepsilon} = 0 \text{ on }\partial \Omega,
\end{align}
has a unique solution $\varphi^{\varepsilon}\in H^1_0(\Omega)$ such that
\begin{align*}
  \varphi^{\varepsilon}
    &\wcv \varphi^0 \quad \text{ in } H_0^1(\Omega),\\
  \nabla \varphi^{\varepsilon}
    &\wcv \nabla \varphi^0 + \nabla_y \varphi^1 \quad \text{ in } L^2 \left(
    \Omega \times Y, \RR^d \right),
\end{align*}
for some $\varphi^0 \in H_0^1(\Omega)$ and $\varphi^1 \in L^2 \left(
  \Omega, H_{\per}^1(Y)/\RR \right)$ that satisfy
\begin{align*}
-\Div \left[ \frac{1}{\abs{Y}} \bfb (y) \left( \nabla \varphi^0 +
  \nabla_y \varphi^1 \right) \right]
  &= f, \qquad \text{ in }\Omega,\\
  -\Div_y \left[ \bfb(y) \left( \nabla \varphi^0 + \nabla_y \varphi^1
  \right) \right]
  &=0, \qquad \text{ in }\Omega \times Y.
\end{align*}
By introducing the cell problems 
\begin{align*}
  \omega^i \in H_{\per}^1(Y)/\RR,\qquad
  -\Div_y \left[ \bfb(y) (\ee^i +  \nabla_y \omega^i(y)) \right] = 0
  \quad \text{ in }Y, \quad 1\leq i\leq d,
\end{align*}
 we have $\varphi^0\in H^1_0(\Omega)$ and $\varphi^1\in L^2 \left(
  \Omega, H_{\per}^1(Y)/\RR \right)$ satisfy
\begin{align}
\label{eq:115}
  \begin{split}
  -\Div \left( \bfb^{\hom} \nabla \varphi^0  \right)
  &=f \text{ in }\Omega,\\
  \varphi^1(x,y)
  &= \frac{\partial \varphi^0}{\partial x_i}(x) \, \omega^i(y),
  \end{split}
\end{align}
where 
\begin{align*}
  \bfb^{\hom}_{jk}
  = \frac{1}{\abs{Y}} \int_Y \bfb(y) \left( \ee^k + \nabla \omega^k(y)
  \right) \cdot \left( \ee^j + \nabla \omega^j(y) \right) \di y, \quad
  1\leq j,k\leq d.
\end{align*}
Since $\bfb^{\hom}$ is a constant elliptic matrix, if $f$ is smooth
enough (e.g., $f \in L^{\infty}(\Omega)$), then $\varphi^0 \in C
(\bar{\Omega})$, and therefore, $\varphi^1 \in L^2(Y, C
(\bar{\Omega}))$, or $\varphi^1$ is admissible. Therefore,
\eqref{eq:23} holds, and we recover the classical corrector result for
the linear case.

In general, when $\bfa$ is nonlinear, $\varphi^1$ does not admit a
finite representation as in \eqref{eq:115}, and thus, the
admisssibility assumption is necessary.

\end{remark}




\begin{remark}
  \label{sec:electr-equat}
 In contrast to our \cref{sec:main-results-1}, in the existing literature, the corrector results for monotone operators are not explicit, in the sense that they are written in the following form:
\begin{align*}
\lim_{\varepsilon \to 0}\norm{\nabla \varphi^{\varepsilon} - \bfp\left(\frac{\cdot}{\varepsilon},
  M^{\varepsilon}(\nabla \varphi^0)\right)}_{L^p(\Omega,\RR^d)}
  = 0,
\end{align*}
where the functions $\bfp$ and $M^{\varepsilon}$ will be
defined in \cref{sec:prel-estim}, cf. e.g.,
\cite{dalmasoCorrectorsHomogenizationMonotone1990} (see also
\cite[Remark 3.7]{allaireHomogenizationTwoscaleConvergence1992}) and
references therein.
\end{remark}

\begin{remark}
  In \cite[Theorem 3.6]{allaireHomogenizationTwoscaleConvergence1992},
  the author obtained \eqref{eq:23} for $p =2$ under the assumption
  that $\varphi^1(x,y)$ is admissible in the sense of
  \cref{def:admissible-function}.  In \cref{sec:main-results-1}, we
  are able to prove that \eqref{eq:23} holds for any
  $p >1$. The corrector result obtained in
  \cref{sec:main-results-1} is fundamental to obtain the closed system
  in \cref{thm:main} below. It is also useful in 
  numerical computation 
  and large-scale regularity of
  homogenization of monotone problems
  \cite{efendievMeyersTypeEstimates2006,bunderEquationfreePatchScheme2021,bunderNonlinearEmergentMacroscale2021,clozeauQuantitativeNonlinearHomogenization2023}.

  Finally, the arguments used in 
  \cref{sec:main-results-1} can be employed to improve several
  classical results, which will be presented in
  \cref{sec:some-inter-corr}.
\end{remark}

We now introduce some auxiliary problems and definitions that will be
necessary for the statement of the main theorem.

To write the homogenized electrostatic equation, in \eqref{eq:28}, we
replace $\nabla \varphi^0$ by $\xi \in \RR^d$ and let
$\varphi^1 = \xi \eta_{\xi}$ for some
$\eta_{\xi} \in W^{1,p}_{\per}(Y)/\RR$ to obtain the cell problem
\begin{align}
\label{eq:13}
-\Div_y  \bfa \left( y, \xi + \nabla_y \eta_{\xi} \right) = 0.
\end{align}

For $1 \le i,j \le d,$ denote by
$\bfU^{ij}$ the vector defined by
$\bfU^{ij}_k \coloneqq y_j \delta_{ik}$ and consider
$\vec{\Upsilon}^{ij} \in W^{1,p}_{\per}(Y,\RR^d)/\RR$ solving
\begin{align}
\label{eq:504}
  \begin{split}
    \Div_{y} \left[ \bfB (y) \, \DD_{y} \left(\bfU^{ij} - \vec{\Upsilon}^{ij}
      \right)\right]
    &= 0 \text{ in } Y,\\
    \int_{\Gamma}  \left\llbracket
      \bfB(y) \, \DD_{y} \left( \bfU^{ij} - \vec{\Upsilon}^{ij}
      \right)
    \right\rrbracket \nn_{\Gamma} \di \hmeas 
    &=0,\\
    \int_{\Gamma}  \left\llbracket
      \bfB(y) \DD_{y} \left( \bfU^{ij} - \vec{\Upsilon}^{ij}
      \right)
    \right\rrbracket \nn_{\Gamma} \times \nn_{\Gamma} \di \hmeas 
    &=0,
    \end{split}
    \end{align}
and, also, consider $\vec{\chi}^{ij} \in W^{1,p}_{\per}(Y,\RR^d)/\RR$
solving
\begin{align}
\label{eq:504b}
  \begin{split}
    \Div_{y} \left[ \bfC(y) \, \DD_{y} \left( \vec{\chi}^{ij}
      \right) + \vec{\zeta}^{ij} \right]
    &= 0 \text{ in } Y,\\
    \int_{\Gamma}  \left\llbracket
      \bfC(y)\DD_{y} \left( \vec{\chi}^{ij}
      \right) +  \vec{\zeta}^{ij}
    \right\rrbracket \nn_{\Gamma} \di \hmeas  
    &=0,\\
    \int_{\Gamma}  \left\llbracket
      \bfC(y) \DD_{y} \left( \vec{\chi}^{ij}
      \right) +  \vec{\zeta}^{ij}
    \right\rrbracket \nn_{\Gamma} \times \nn_{\Gamma} \di \hmeas 
    &=0,
  \end{split}
\end{align}
with the matrix $\vec{\zeta}^{ij}$ given by 
\begin{equation}
    \label{zetaij}
    \vec{\zeta}^{ij}(y)
  \coloneqq
   (\ee^i + \nabla_{y} \eta_{\ee^i}(y))  \otimes ( \ee^j
    + \nabla_{y} \eta_{\ee^j}(y)),~ y \in Y,
\end{equation}
where $\eta_{\ee^j}$ given by \eqref{eq:13} denotes the microscopic electric stress tensor on $Y$. 

\begin{remark}
\label{sec:main-results}
The cell problem \eqref{eq:504} describes the local behavior of the elastic displacement without the electric effect. The electric effect is
captured in \eqref{eq:504b}. Note that the above cell problems (or
similar) were observed in
\cite{francfortEnhancementElastodielectricsHomogenization2021,dangHomogenizationNondiluteSuspension2021,dangGlobalGradientEstimate2022}.
\end{remark}

We also define: 
\begin{align}
\label{eq:513}
  \begin{split}
    \bfa^{\hom} (\xi)
  &\coloneqq \frac{1}{\abs{Y}} \int_Y \bfa \left( y, \xi + \nabla_y
    \eta_{\xi}(y) \right) \di y, \quad \xi \in \RR^d,\\
  \bfB^{\hom}_{ijmn}
  &\coloneqq \frac{1}{\abs{Y}} \int_Y 
  \bfB(y) \, \DD_{y}(\bfU^{ij}-\vec{\Upsilon}^{ij}) : \DD_{y}(\bfU^{mn}-\vec{\Upsilon}^{mn})
  \di y, \quad 1 \le i,j,m,n \le d,\\
    \bfC^{\hom}_{ij}
  &\coloneqq
    \frac{1}{\abs{Y}} \int_Y \left( \bfC(y) \, \DD_{y}(\vec{\chi}^{ij}) 
    +  \vec{\zeta}^{ij}\right)\di y, \quad 1 \le i,j \le d,
    \end{split}
\end{align}
where $\bfa^{\hom}$ is the \emph{effective electric conductivity},
which is monotone. The tensor $\bfB^{\hom} \coloneqq \left\{ \bfB^{\hom}_{ijmn}\right\}_{1 \le i,j,m,n \le
  d}$ is the \emph{effective elasticity}, and it is a fourth-rank
tensor that is symmetric and elliptic.
And lastly, $\bfC^{\hom}$ is the \emph{effective electrostriction tensor}.

We now state the main result of this paper, \cref{thm:main}, which provides
the homogenization for the coupled system \eqref{eq:p410}. The
proof of \cref{thm:main} is carried out in \cref{sec:proof-crefthm:main}.

\begin{theorem} \label{thm:main}
Suppose further that $p \ge 2$. The solution
  $(\varphi^{\varepsilon}, \uu^{\varepsilon}) \in W_0^{1,p}(\Omega)
  \times W_0^{1,1}(\Omega,\RR^d)$ of \eqref{eq:p410} satisfies
\begin{align*}
    \varphi^\varepsilon 
    \wcv \varphi^0 \text{ in } W_0^{1,p}(\Omega), \qquad \text{
  and } \qquad
    \uu^\varepsilon
    \wcv \uu^0 \text{ in distribution},
\end{align*}
where  $\uu^0\in W_0^{1,1}(\Omega,\RR^d)$ and $\varphi^0\in W_0^{1,p}(\Omega)$ are solutions of
\begin{align}
\label{summary-eqn}
\begin{aligned}
  -\Div \left[ \bfa^{\hom}( \nabla\varphi^0) \right] 
  &= f &&\text{ in }\Omega,\\
  -\Div \left[ \bfB^{\hom}\,\DD \left( \uu^0 \right)+\bfC^{\hom} (
  \nabla \varphi^0 \otimes \nabla \varphi^0 )\right]
  &= \bg &&\text{ in }\Omega,\\
  \uu^0
  = 0, \quad \varphi^0
  &= 0,  && \text{ on } \partial \Omega,
  \end{aligned}
\end{align}
with $\bfa^{\hom},\bfB^{\hom},$ and $\bfC^{\hom}$ defined in \eqref{eq:513}.

Moreover, 
\begin{itemize}
\item  $\bfa^{\hom}$
is continuous and monotone, i.e., for every $\xi_1, \xi_2 \in \RR^d$, the following holds
\begin{align}
\label{eq:48}
  \begin{split}
    \abs{\bfa^{\hom}(\xi_1) - \bfa^{\hom}(\xi_2)}
    &\le \Lambda_{\hom}\left( 1 + \abs{\xi_1}^2 + \abs{\xi_2}^2 \right)^{\frac{p-2-\theta}{2}} \abs{\xi_1-\xi_2}^{\theta},\\
    \left[ \bfa^{\hom}(\xi_1) - \bfa^{\hom}(\xi_2) \right]\cdot \left(
    \xi_1 -\xi_2 \right)
    &\ge \lambda_{\hom} \left( 1 + \abs{\xi_1}^2 + \abs{\xi_2}^2 \right)^{\frac{p-2}{2}} \abs{\xi_1-\xi_2}^{2},
  \end{split}
\end{align}
where $\theta = \frac{\alpha}{2-\alpha},$ and $\lambda_{\hom},
\Lambda_{\hom} > 0$ are constants depending on $d, p, \alpha, \beta,
\lambda_o, \Lambda_o, \Lambda_{*}$, which are defined in \cref{ss:setup}.
\item  $\bfB^{\hom}$ is symmetric and elliptic.
\end{itemize}

\end{theorem}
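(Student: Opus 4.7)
The plan is to combine the homogenization of the electrostatic part (\cref{sec:main-results-7}) with the explicit corrector of \cref{sec:main-results-1} in order to handle the quadratic electrostrictive coupling, and then pass to the limit in the (linear but low-regularity) elasticity equation by two-scale convergence. The first step is to derive $\varepsilon$-uniform a priori bounds: the monotonicity \ref{cond:a-monotonicity} together with \ref{cond:a-bound} yields $\|\varphi^\varepsilon\|_{W_0^{1,p}(\Omega)} \le C$, while the elasticity equation \eqref{eq:p411}, whose right-hand side involves $\bg \in L^r$ and the quadratic Maxwell term $\bfC(\cdot/\varepsilon)\maxss^\varepsilon$ (bounded in $L^{p/2}$ via the electrostatic bound, hence in $L^1$ since $p \ge 2$), is handled by a duality/SOLA argument combined with the $L^p$-gradient estimates of \cref{sec:appendix-1} and \cref{sec:appendix}, giving $\|\uu^\varepsilon\|_{W_0^{1,1}(\Omega,\RR^d)} \le C$. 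Along subsequences, therefore, $\varphi^\varepsilon \wcv \varphi^0$ in $W_0^{1,p}(\Omega)$ and $\uu^\varepsilon$ converges in the distributional sense to some $\uu^0 \in W_0^{1,1}(\Omega,\RR^d)$.

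Next, I would identify the homogenized electrostatic equation by exploiting the cell problem \eqref{eq:13}: setting $\xi = \nabla\varphi^0(x)$ and using unique solvability of \eqref{eq:28} modulo additive constants, \cref{sec:main-results-7} gives $\nabla_y \varphi^1(x,y) = \nabla_y \eta_{\nabla\varphi^0(x)}(y)$, so that \eqref{eq:26} reduces to $-\Div[\bfa^{\hom}(\nabla \varphi^0)] = f$ with $\bfa^{\hom}$ as in \eqref{eq:513}. The monotonicity and continuity bounds \eqref{eq:48} for $\bfa^{\hom}$ then follow by testing the difference of the cell problems for $\xi_1$ and $\xi_2$ against $\eta_{\xi_1} - \eta_{\xi_2}$ and invoking \ref{cond:a-continuity}--\ref{cond:a-monotonicity}; the exponent $\theta = \alpha/(2-\alpha)$ appears through a standard Hölder-type interpolation between the upper exponent $\alpha$ and the lower exponent $2$.

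The crux of the proof is passing to the limit in the elasticity equation. Here \cref{sec:main-results-1} is essential: it upgrades the mere two-scale convergence of $\nabla \varphi^\varepsilon$ to strong two-scale convergence in $L^p(\Omega\times Y)$, and since $p \ge 2$ this propagates to the quadratic Maxwell tensor, yielding
\[
  \maxss^\varepsilon \cv[2] \big(\nabla\varphi^0(x) + \nabla_y \eta_{\nabla\varphi^0(x)}(y)\big) \otimes \big(\nabla\varphi^0(x) + \nabla_y \eta_{\nabla\varphi^0(x)}(y)\big)
\]
strongly in $L^{p/2}(\Omega\times Y)$. Testing \eqref{eq:r7} with oscillating test functions $\vv(x) + \varepsilon \vv^1(x,x/\varepsilon)$, with $\vv \in \calD(\Omega,\RR^d)$ and $\vv^1 \in \calD(\Omega, C_{\per}^\infty(Y,\RR^d))$, the strong two-scale convergence of $\maxss^\varepsilon$ pairs with the (measure-valued) weak two-scale convergence of $\DD(\uu^\varepsilon)$ to produce a limiting two-scale variational identity. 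Its microscopic component decouples, by linearity, into the two cell problems \eqref{eq:504} (driven by the macroscopic strain $\DD(\uu^0)$) and \eqref{eq:504b} (driven by the microscopic Maxwell stress $\vec{\zeta}^{ij}$). Substituting the resulting expression for $\uu^1$ into the macroscopic component reconstructs the effective system \eqref{summary-eqn} with $\bfB^{\hom}$ and $\bfC^{\hom}$ as in \eqref{eq:513}; the symmetry and ellipticity of $\bfB^{\hom}$ are then immediate from its variational characterization.

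The main obstacle is precisely this quadratic coupling: weak convergence of $\nabla \varphi^\varepsilon$ alone does not allow passing to the limit in $\maxss^\varepsilon$, and the classical, implicit corrector recalled in \cref{sec:electr-equat} is incompatible with oscillating two-scale test functions in the elasticity equation. This is exactly what \cref{sec:main-results-1} resolves. A secondary subtlety is the low regularity of the elastic data: since $\bg \in L^r$ with $r > 1$ possibly small and the Maxwell stress lives only in $L^{p/2}$, the source term of \eqref{eq:r7} is generally outside $W^{-1,q}$ for $q$ close to $2$, which is why the BV-type two-scale convergence tools of \cite{amarTwoscaleConvergenceHomogenization1998,ferreiraReiteratedHomogenizationBV2012} and the stand-alone $L^p$-gradient estimates of the appendices are essential to make sense of the limit passage.
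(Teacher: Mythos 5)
Your overall strategy is the same as the paper's (electrostatic bound, identification of $\bfa^{\hom}$ via the cell problem, strong two-scale convergence of $\maxss^{\varepsilon}$ from \cref{sec:main-results-1}, oscillating test functions in \eqref{eq:r7}, decoupling into \eqref{eq:504}--\eqref{eq:504b}), but two steps are not right as stated. First, the uniform a priori bound you claim, $\norm{\uu^{\varepsilon}}_{W_0^{1,1}(\Omega,\RR^d)}\le C$, is not what the duality argument gives and is not established in the paper. Testing the elasticity equation against solutions of the adjoint problem \eqref{eq:89} with $\bg',\ff'\in\calD(\Omega')$ and using the $\varepsilon$-uniform Lipschitz estimate \eqref{eq:7} only controls $\uu^{\varepsilon}$ and $\DD(\uu^{\varepsilon})$ as linear functionals on $C_c$, i.e.\ it yields the local $\mathrm{BV}$ bound \eqref{eq:10}; by Riesz representation one gets measures, not $L^1$ gradients. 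The Sobolev regularity $\uu^{\varepsilon}\in W^{1,q(\varepsilon)}_{\loc}$ obtained through Phuc's estimate and the Riesz--Thorin interpolation comes with constants $C(\varepsilon)$ and exponents $q(\varepsilon)$ that degenerate with $\varepsilon$, so it cannot serve as a uniform bound either. The compactness you need is the $\mathrm{BV}$-type two-scale compactness of \cite{amarTwoscaleConvergenceHomogenization1998}, with limits that are a priori only measures, $\uu^0_{K_0}\in\mathrm{BV}$ and $\uu^1_{K_0}\in\calM(K,\mathrm{BV}_{\per}(Y,\RR^d))$.

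Second, and consequently, your passage from the limiting two-scale identity to the classical cell problems and to \eqref{summary-eqn} ``by linearity'' skips the step that actually carries the weight in the paper: one must upgrade the measure-valued two-scale limit to a function before the ansatz $\uu^1=-\DD(\uu^0)_{ij}\vec{\Upsilon}^{ij}+\partial_i\varphi^0\,\partial_j\varphi^0\,\vec{\chi}^{ij}$ and the substitution into the macroscopic equation make sense, and before one can even assert $\uu^0\in W_0^{1,1}(\Omega,\RR^d)$. The paper does this by applying Meyers' theorem to the cell problem \eqref{eq:28} and \cref{sec:main-results-4} to the homogenized electrostatic equation to get $\maxss^0\in L^{q^+}(\Omega\times Y)$ for some $q^+>1$, and then repeating the interpolation--duality argument of \cref{sec:normal-trace-elastic} at the two-scale level to show $\DD(\uu^0_{K_0})+\DD_y(\uu^1_{K_0})\in L^{q^+}$, after which \eqref{eq:14} becomes the classical identity \eqref{eq:94}, uniqueness (via a SOLA argument) lets one patch the local limits into a global pair $(\uu^0,\uu^1)$, and only then does the decoupling you describe go through. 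Without this regularity step your argument stalls at a measure-valued two-scale system with an $L^1$-type right-hand side. A minor further slip: the Maxwell term in \eqref{eq:r7} is paired with the oscillating test strain, not with $\DD(\uu^{\varepsilon})$, and for $p=2$ the paper only gets weak two-scale convergence of $\maxss^{\varepsilon}$ in $L^1$ (via Visintin's composition result), which suffices because the test field $\bfC(y)(\DD(\vv^0)+\DD_y(\vv^1))$ is admissible.
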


\begin{remark}
\label{sec:main-results-3}

The result in \cref{thm:main} holds also for non-homogeneous Dirichlet
boundary conditions. In fact, \cref{thm:main} holds if
$\varphi^{\varepsilon} = \varphi_b$ on $\partial \Omega$, for some
$\varphi_b \in W^{1,p^+}(\Omega), p^+ > p$, and \ref{cond:a-bound} is
  replaced by
\begin{align}
\label{eq:98}
\abs{\bfa(y, \nabla\varphi_b(x))} \le \Lambda_{*}, \qquad \text{a.e. }
  y \in Y, x\in \Omega;
\end{align}
with all the estimates changed accordingly to adapt to this
case. For example, the boundedness of $\varphi^{\varepsilon}$ can be
proven as follows. By the monotonicity \ref{cond:a-monotonicity} and the boundedness \ref{cond:a-bound} of $\bfa$, and using H\"{o}lder, Young and Poincar\'{e}
inequalities, we obtain
\begin{align*}
  \lambda_o \norm{\nabla \varphi^{\varepsilon}-\nabla \varphi_{b}}^p_{L^p}
  &\le \int_{\Omega} \left[  \bfa \left( \frac{x}{\varepsilon}, \nabla
    \varphi^{\varepsilon}  \right) - \bfa \left(
    \frac{x}{\varepsilon},\nabla\varphi_b \right) \right] \cdot (\nabla
    \varphi^{\varepsilon}-\nabla \varphi_b) \di x\\
 &=\int_{\Omega} f \left( \varphi^{\varepsilon} - \varphi_b \right)\di x-\int_{\Omega} \bfa \left(
    \frac{x}{\varepsilon},\nabla\varphi_b \right) \cdot \left( \nabla
    \varphi^{\varepsilon} - \nabla \varphi_b \right) \di x\\
  &\le
    \norm{f}_{L^{p'}}\norm{\varphi^{\varepsilon}-\varphi_b}_{L^p} + C \Lambda_{*}
    \norm{\nabla \varphi^{\varepsilon}-\nabla \varphi_b}_{L^p}\\
  &\le C (\norm{f}_{L^{p'}} + \Lambda_{*}) 
    \norm{\nabla \varphi^{\varepsilon}-\nabla \varphi_b}_{L^p}\\
  &\le \frac{C}{p'\mu} \left( \norm{f}_{L^{p'}} + \Lambda_{*}
    \right)^{p'} + C \, \frac{\mu}{p} \norm{\nabla\varphi^{\varepsilon} - \nabla \varphi_b}_{L^p}^p,
\end{align*}
for some $\mu > 0$. Choosing $\mu$ small enough, we conclude 
\begin{align*}
  \norm{\nabla \varphi^{\varepsilon} - \nabla \varphi_b}^p_{L^p}
  \le
  C \left( \norm{f}_{L^{p'}} + \Lambda_{*} \right)^{p'}
  \le C \left( \norm{f}_{L^{p'}}^{p'} + \Lambda_{*}^{p'} \right);
\end{align*}
and thus by Poincar\'{e} inequality, 
\begin{align*}
\norm{\varphi^{\varepsilon}}_{W^{1,p}}^p
  &\le C\norm{\varphi^{\varepsilon}- \varphi_b}_{W^{1,p}}^p
    + C\norm{\varphi_b}_{W^{1,p}}^p \\
  &\le C\norm{\nabla \varphi^{\varepsilon} - \nabla \varphi_b}^p_{L^p} +
  C\norm{\varphi_b}_{W^{1,p}}^p\\
  &\le
  C \left( \norm{f}^{p'}_{L^{p'}} + \norm{ \varphi_b}^p_{W^{1,p}}+ \Lambda_{*}^{p'} \right).
\end{align*}
\end{remark}

\begin{remark}
\label{sec:main-results-5}
The homogenization results of the coupled systems in \cite{francfortEnhancementElastodielectricsHomogenization2021,tianDielectricElastomerComposites2012} follow directly as particular cases of \cref{thm:main}, when $p=2$ and $\bfa$ is linear, i.e.,
$\bfa (y, \xi) = \bfb (y) \xi$ for some bounded, elliptic, and
periodic matrix $\bfb$.
\end{remark}

\begin{remark}
The enhancement effect, i.e., when the right hand side of \eqref{eq:p421} has the form $f(x) = \frac{1}{\varepsilon} f_1 (x) f_2 (\frac{x}{\varepsilon})$ for some $f_1$ and $f_2$, as considered in
\cite{francfortEnhancementElastodielectricsHomogenization2021}, can be
handled using the approach developed in this paper.
\end{remark}
\begin{remark}
The proof in this paper can be  straightforwardly extended to the case
when $\bfB$ is a function in $VMO (\Omega,\RR^d)$,
see \cref{sec:high-regul-exist}.
\end{remark}

\section{Proof of \cref{sec:main-results-7}}
\label{sec:proof-crefs-results-2}

In \eqref{eq:r6}, let
$\eta = \varphi^{\varepsilon} \in W^{1,p}_0(\Omega)$ and use \ref{cond:a-bound}, \ref{cond:a-monotonicity}, H\"{o}lder inequality, and
Poincar\'{e} inequality, to obtain
\begin{align*}
  \lambda_o \norm{\nabla \varphi^{\varepsilon}}^p_{L^p(\Omega,\RR^d)}
  &\le \int_{\Omega} \left[  \bfa \left( \frac{x}{\varepsilon}, \nabla
    \varphi^{\varepsilon}  \right) - \bfa \left(
    \frac{x}{\varepsilon},0 \right) \right] \cdot \nabla
    \varphi^{\varepsilon} \di x\\
  &=\int_{\Omega} f \, \varphi^{\varepsilon}\di x - \int_{\Omega}\bfa \left(
    \frac{x}{\varepsilon},0 \right) \cdot \nabla
    \varphi^{\varepsilon} \di x \\
    &\le
    \norm{f}_{L^{p'}(\Omega)}\norm{\varphi^{\varepsilon}}_{L^p(\Omega)} + C
      \norm{\nabla \varphi^{\varepsilon}}_{L^p(\Omega,\RR^d)}
    \le C \norm{f}_{L^{p'}(\Omega)}\norm{\nabla \varphi^{\varepsilon}}_{L^p(\Omega,\RR^d)}.
\end{align*}
The estimate above and Poincar\'{e} inequality deliver 
\begin{align}
\label{eq:32}
\norm{\varphi^{\varepsilon}}^p_{W_0^{1,p}(\Omega)} \le C \norm{f}^{p'}_{L^{p'}(\Omega)}.
\end{align}
Therefore, there exist $\varphi^0 \in W_0^{1,p}(\Omega)$ and $\varphi^1 \in
L^p(\Omega, W_{\per}^{1,p}(Y)/\RR)$ such that 
\begin{align*}
\varphi^{\varepsilon} \wcv[2] \varphi^0, \text{ and } \nabla
  \varphi^{\varepsilon} \wcv[2] \nabla \varphi^0 + \nabla_y \varphi^1.
\end{align*}

Since $F^{\varepsilon}(x) \coloneqq \bfa \left( \frac{x}{\varepsilon},
  \nabla \varphi^{\varepsilon}(x) \right)$ is
bounded in $L^{p'}(\Omega,\RR^d)$ by \eqref{eq:32} and
\ref{cond:a-continuity}, then there exists $F^0 \in L^{p'}(\Omega \times
Y)$ such that  
\begin{align}
\label{eq:125}
F^{\varepsilon} \wcv[2] F^0.
\end{align}
In \eqref{eq:r6}, by letting $\varepsilon \to 0$ and using \eqref{eq:125}, we
have
\begin{align*}
\frac{1}{\abs{Y}} \int_{\Omega \times Y} F^0(x,y) \cdot \nabla \eta (x) \di x
  \di y
  = \int_{\Omega} f(x) \eta (x) \di x, \qquad \text{ for all } \eta \in \calD (\Omega),
\end{align*}
and by applying Fubini's theorem and integration by parts, we obtain
\begin{align}
\label{eq:127}
  -\Div \left[ \frac{1}{\abs{Y}} \int_Y F^0(x,y) \di y \right]
  = f(x).
\end{align}

Let $\eta^0 \in \calD(\Omega)$ and $\eta^1 \in \calD (Y)$. In
\eqref{eq:r6}, choose
$\eta (x) = \varepsilon \eta^0(x)\eta^1 \left( \frac{x}{\varepsilon} \right)$,
then 
\begin{align*}
\int_{\Omega} \eta^0(x)F^{\varepsilon} \left( x \right) \cdot \nabla
  \eta^1 \left( \frac{x}{\varepsilon} \right) \di x + \varepsilon \int_{\Omega}
  \eta^1 \left( \frac{x}{\varepsilon} \right)F^{\varepsilon} \left( x \right) \cdot \nabla
  \eta^0 \left( x \right) \di x
  = \int_{\Omega} f (x) \varepsilon \eta^0(x) \eta^1 \left(
  \frac{x}{\varepsilon}\right) \di x.
\end{align*}
Letting $\varepsilon \to 0$ and using \eqref{eq:125}, we obtain
\begin{align*}
  \frac{1}{\abs{Y}}\int_{\Omega \times Y} \eta^0(x) F^0(x,y) \cdot \nabla
  \eta^1(y) \di x\di y = 0.
\end{align*}
By Fubini's theorem and the fundamental lemma of calculus of variation, we have
\begin{align*}
  \int_Y F^0(x,y) \cdot \nabla \eta^1(y) \di y
  = 0.
\end{align*}
Integration by parts delivers
\begin{align}
  \label{eq:126}
  \Div_y F^0(x,y)
   = 0.
\end{align}

We claim that
$F^0(x,y) = \bfa \left( y, \nabla \varphi^0(x) + \nabla_y
  \varphi^1(x,y) \right).$ Indeed, for $t > 0$, let
$\eta,\eta^1 \in \calD \left( \Omega, C_{\per}^{\infty}(Y)
\right)$ and define
\begin{align}
  \label{eq:128}
  \mu^{\varepsilon}(x)
  \coloneqq
  \nabla \left( \varphi^0(x) + \varepsilon \eta^1
  \left(x,\frac{x}{\varepsilon}\right) \right) + t \eta \left( x, \frac{x}{\varepsilon} \right),
\end{align}
then
\begin{align}
  \label{eq:130}
\mu^{\varepsilon} \wcv[2] \mu^0 \coloneqq \nabla \varphi^0(x) + \nabla_y
  \eta^1(x,y) + t \eta (x,y).
\end{align}
By monotonicity \ref{cond:a-monotonicity}, we obtain 
\begin{align*}
\int_{\Omega} \left( F^{\varepsilon} - \bfa
  \left(\frac{x}{\varepsilon}, \mu^{\varepsilon} \right) \right)
  \cdot \left( \nabla \varphi^{\varepsilon} - \mu^{\varepsilon}
  \right) \di x
  \ge 0, 
\end{align*}
or equivalently,
\begin{align*}
\int_{\Omega} \left( -\varphi^{\varepsilon}\Div F^{\varepsilon} - \bfa
  \left( \frac{x}{\varepsilon}, \mu^{\varepsilon} \right) \cdot \nabla
  \varphi^{\varepsilon}
  -F^{\varepsilon} \cdot \mu^{\varepsilon} + \bfa \left(
  \frac{x}{\varepsilon}, \mu^{\varepsilon} \right) \cdot \mu^{\varepsilon}
  \right) \di x
  \ge 0.
\end{align*}
We have $-\Div F^{\varepsilon} = f$ by \eqref{eq:p421} and, by  \ref{cond:a-continuity}
and \eqref{eq:128}, we have $\bfa
\left( \frac{x}{\varepsilon}, \mu^{\varepsilon} \right)$ and $\mu^{\varepsilon}$ are admissible in the sense of
\cref{def:admissible-function}. Therefore, letting $\varepsilon \to 0$, we obtain
\begin{align*}
  \frac{1}{\abs{Y}} \int_{\Omega \times Y} \left( f \varphi^0
  -\bfa (y, \mu^0) \cdot \left( \nabla \varphi^0 + \nabla_y \varphi^1
  \right) - F^0 \cdot \mu^0 + \bfa (y, \mu^0) \cdot \mu^0
  \right)
  \di x \di y \ge 0,
\end{align*}
or equivalently, 
\begin{align}
\label{eq:129}
  \frac{1}{\abs{Y}} \int_{\Omega \times Y} \left( f \varphi^0
  - F^0 \cdot \mu^0 + \bfa (y, \mu^0) \cdot \left(
  \nabla_y(\eta^1-\varphi^1) + t \eta \right)
  \right)
  \di x \di y \ge 0,
\end{align}
where $\mu^0$ is given by \eqref{eq:130}. In \eqref{eq:129}, choose a sequence $\eta^1$ that strongly
converges to $\varphi^1$ in $L^p \left( \Omega, W^{1,p}_{\per}(Y)/\RR
\right)$, we conclude that 
\begin{align}
  \label{eq:131}
  \begin{split}
&\frac{1}{\abs{Y}} \int_{\Omega \times Y}  f \varphi^0
  - F^0 \cdot \left( \nabla \varphi^0 + \nabla_y
   \varphi^1 + t \eta  \right) \di x \di y\\
  &\quad + \frac{1}{\abs{Y}} \int_{\Omega \times Y} \bfa (y, \nabla \varphi^0 + \nabla_y
  \varphi^1 + t \eta ) \cdot t \eta  
    \di x \di y \ge 0.
  \end{split}
\end{align}
By \eqref{eq:127} and \eqref{eq:126}, \eqref{eq:131} becomes
\begin{align*}
\frac{1}{\abs{Y}} \int_{\Omega \times Y}  
  - F^0 \cdot  t \eta +  \bfa (y, \nabla \varphi^0 + \nabla_y
  \varphi^1 + t \eta ) \cdot t \eta  
    \di x \di y \ge 0.
\end{align*}
Dividing both sides by $t > 0$, then letting $t \to 0$ and using
\ref{cond:a-continuity}, we obtain
\begin{align}
\label{eq:133}
\frac{1}{\abs{Y}} \int_{\Omega \times Y}  
  - F^0 \cdot  \eta +  \bfa (y, \nabla \varphi^0 + \nabla_y
  \varphi^1) \cdot  \eta  
    \di x \di y \ge 0,
\end{align}
for all $\eta \in \calD \left( \Omega, C_{\per}^{\infty}(Y) \right).$
Therefore, $F^0 = \bfa \left( y, \nabla \varphi^0 + \nabla_y \varphi^1
\right)$ and the two-scale limits $\varphi^0$
and $\varphi^1$ satisfy the
following system
\begin{subequations}
  \begin{align*}
    -\Div \left[ \frac{1}{\abs{Y}}\int_Y \bfa\left(y, \nabla \varphi^0 +
    \nabla_{\yy} \varphi^1 \right)\di \yy \right]
    &= f, \text{ in } \Omega,\\
    -\Div_{\yy}  \bfa \left(y, \nabla \varphi^0 + \nabla_{\yy}
    \varphi^1\right) 
    &=0, \text{ in } \Omega \times Y.
  \end{align*}
\end{subequations}
Since the system above has a unique solution by monotonicity
\ref{cond:a-monotonicity}, the entire sequence
$\varphi^{\varepsilon}$ is convergent. \hfill $\Box$

\section{Proof of \cref{sec:main-results-1}}
\label{sec:proof-crefs-results}
The proof adopts ideas from
\cite{dalmasoCorrectorsHomogenizationMonotone1990}, with improvements
by generalizing several estimates obtained in the cited paper and
 using two-scale convergence.

\subsection{Preliminary results}
\label{sec:prel-estim}
Let $p > 1 $. We establish some estimates that will be needed later in the proof of \cref{sec:main-results-1}.
\begin{lemma}
  \label{sec:some-inequalities-1}
  Let $Z$ be a metric space. There exists $C = C(d,p) > 0$ such that
  for any $\vv, \ww \in L^p(Z,\RR^d),$ we have
\begin{align}
  \label{eq:n80}
  \begin{split}
  &\norm{\vv - \ww}_{L^p(Z,\RR^d)}^p\\
  &\quad\le
    C \left(
    \int_{Z}  \left( 1 + \abs{\vv}^2 + \abs{\ww}^2
    \right)^{\frac{p-2}{2}}\abs{\vv -\ww}^2 \di z
    \right)^{\frac{1}{2}}\left(
    \abs{Z} +\norm{\vv}_{L^p(Z,\RR^d)}^p + \norm{\ww}_{L^p(Z,\RR^d)}^p
    \right)^{\frac{1}{2}}.
  \end{split}
\end{align}

\end{lemma}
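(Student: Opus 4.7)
The plan is to prove the pointwise inequality by an algebraic split and then apply Cauchy--Schwarz on $Z$. The natural factorization to try is
\begin{align*}
  |\vv-\ww|^p = \bigl[(1+|\vv|^2+|\ww|^2)^{(p-2)/2}|\vv-\ww|^2\bigr]^{1/2} \cdot \bigl[(1+|\vv|^2+|\ww|^2)^{-(p-2)/2}|\vv-\ww|^{2(p-1)}\bigr]^{1/2},
\end{align*}
which is an identity for any $p>1$. The first bracket is exactly the integrand appearing inside the first factor on the right-hand side of \eqref{eq:n80}; the role of the second bracket is to be controlled by $1+|\vv|^p+|\ww|^p$ after integration.

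First I would integrate over $Z$ and apply the Cauchy--Schwarz inequality to the product, which gives
\begin{align*}
  \int_Z |\vv-\ww|^p\, dz \le A^{1/2}\, B^{1/2},
\end{align*}
where $A$ is the monotonicity-type integral in \eqref{eq:n80} and
\begin{align*}
  B \coloneqq \int_Z (1+|\vv|^2+|\ww|^2)^{-(p-2)/2}|\vv-\ww|^{2(p-1)}\, dz.
\end{align*}
Next I would bound $B$ by combining the two factors. Since $|\vv-\ww|^2 \le 2(|\vv|^2+|\ww|^2) \le 2(1+|\vv|^2+|\ww|^2)$, raising to the $(p-1)$-th power yields $|\vv-\ww|^{2(p-1)} \le 2^{p-1}(1+|\vv|^2+|\ww|^2)^{p-1}$, so the exponents collapse to $p/2$:
\begin{align*}
  B \le 2^{p-1}\int_Z (1+|\vv|^2+|\ww|^2)^{p/2}\, dz.
\end{align*}
Then the elementary inequality $(1+a+b)^{p/2} \le C_p(1+a^{p/2}+b^{p/2})$ for $a,b\ge 0$, applied with $a=|\vv|^2$ and $b=|\ww|^2$, delivers $B \le C(|Z|+\|\vv\|_{L^p(Z,\RR^d)}^p+\|\ww\|_{L^p(Z,\RR^d)}^p)$, and plugging into the Cauchy--Schwarz bound yields \eqref{eq:n80}.

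I do not foresee a serious obstacle: the whole argument is a pointwise algebraic identity followed by one application of Cauchy--Schwarz and an elementary convexity bound. The only subtlety is choosing the exponents $1/2, 1/2$ in the factorization so that (i) the first bracket reproduces the monotonicity integrand verbatim and (ii) the surplus power $-(p-2)/2$ in the second bracket is exactly compensated by $|\vv-\ww|^{2(p-1)}$ to give the clean exponent $p/2$. The argument works uniformly in $p>1$ and does not use any structure of $Z$ beyond it being a measure space, so the constant $C$ depends only on $d$ and $p$, as claimed.
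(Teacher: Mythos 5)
Your proof is correct and is essentially the paper's own argument: the paper applies H\"older's inequality with general parameters $\nu,\mu,\lambda$ and then chooses $\nu=2$, $\mu=p-1$, $\lambda=\frac{2-p}{4}$, which produces exactly your factorization $\abs{\vv-\ww}^p=\bigl[(1+\abs{\vv}^2+\abs{\ww}^2)^{\frac{p-2}{2}}\abs{\vv-\ww}^2\bigr]^{1/2}\bigl[(1+\abs{\vv}^2+\abs{\ww}^2)^{\frac{2-p}{2}}\abs{\vv-\ww}^{2(p-1)}\bigr]^{1/2}$ followed by Cauchy--Schwarz, and then controls the second factor via $\abs{\vv-\ww}^{2(p-1)}\le C(1+\abs{\vv}^2+\abs{\ww}^2)^{p-1}$ just as you do (valid since $p>1$). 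The only cosmetic difference is that you state the chosen exponents directly rather than deriving them from the general parameterization.
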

\begin{proof}[Proof of \cref{sec:some-inequalities-1}]
  Let $\nu > 1$, $\mu \in [0,p]$, and $\lambda \in \RR$ be
parameters that will be chosen later, then using H\"{o}lder's inequality we obtain
\begin{align*}
  &\norm{\vv -\ww}_{L^p(Z,\RR^d)}^p\\ 
  &~= \int_{Z} {\abs{\vv -\ww}^{p-\mu}}{ \left( 1 +
    \abs{\vv}^2 + \abs{\ww}^{2} \right)^{-\lambda}} \cdot \abs{\vv-\ww}^{\mu}\left( 1 +
    \abs{\vv}^2 + \abs{\ww}^2 \right)^{\lambda} \di z
   \\
  &~\le
    \left(
    \int_{Z} \abs{\vv -\ww}^{\nu \left( p-\mu \right)} \left( 1 +
    \abs{\vv}^2 + \abs{\ww}^2 \right)^{-\lambda \nu} \di z
    \right)^{\frac{1}{\nu}}
    \left(
    \int_{Z} \abs{\vv -\ww}^{\frac{\mu \nu }{\nu -1}} \left(
    1+ \abs{\vv}^2 + \abs{\ww}^2 \right)^{\frac{\lambda \nu}{\nu-1}}
    \di z
    \right)^{\frac{\nu-1}{\nu}}.
\end{align*}
We want $\nu (p - \mu) =2$ and $-\lambda \nu = \frac{p-2}{2}$ (to
mimic \eqref{eq:n80}), so we let $\nu = 2,~\mu= p-1,$ and
$\lambda = \frac{2 -p}{4}$. With these choices, the estimate above becomes
\begin{align*}
  &\norm{\vv -\ww}_{L^p(Z,\RR^d)}^p\\ 
  &~\le
    \left(
    \int_{Z} \abs{\vv -\ww}^2 \left( 1 +
    \abs{\vv}^2 + \abs{\ww}^2 \right)^{\frac{p-2}{2}} \di z
    \right)^{\frac{1}{2}}
    \left(
    \int_{Z} \abs{\vv -\ww}^{2(p-1)} \left(
    1+ \abs{\vv}^2 + \abs{\ww}^2 \right)^{\frac{2-p}{2}}
    \di z
    \right)^{\frac{1}{2}}\\
  &~\le
    C\left(
    \int_{Z} \abs{\vv -\ww}^2 \left( 1 +
    \abs{\vv}^2 + \abs{\ww}^2 \right)^{\frac{p-2}{2}} \di z
    \right)^{\frac{1}{2}}
    \left(
    \int_{Z} \left( \abs{\vv}^2 + \abs{\ww}^2 \right)^{p-1} \left(
    1+ \abs{\vv}^2 + \abs{\ww}^2 \right)^{\frac{2-p}{2}}
    \di z
    \right)^{\frac{1}{2}}\\
  &~\le
    C\left(
    \int_{Z} \abs{\vv -\ww}^2 \left( 1 +
    \abs{\vv}^2 + \abs{\ww}^2 \right)^{\frac{p-2}{2}} \di z
    \right)^{\frac{1}{2}}
    \left(
    \int_{Z} \left( 1+ \abs{\vv}^2 + \abs{\ww}^2 \right)^{p-1} \left(
    1+ \abs{\vv}^2 + \abs{\ww}^2 \right)^{\frac{2-p}{2}}
    \di z
    \right)^{\frac{1}{2}}\\
  &~\le
    C\left(
    \int_{Z} \abs{\vv -\ww}^2 \left( 1 +
    \abs{\vv}^2 + \abs{\ww}^2 \right)^{\frac{p-2}{2}} \di z
    \right)^{\frac{1}{2}}
    \left(
    \int_{Z}  \left(
    1+ \abs{\vv}^2 + \abs{\ww}^2 \right)^{\frac{p}{2}}
    \di z
    \right)^{\frac{1}{2}}\\
  &~\le
    C\left(
    \int_{Z} \abs{\vv -\ww}^2 \left( 1 +
    \abs{\vv}^2 + \abs{\ww}^2 \right)^{\frac{p-2}{2}} \di z
    \right)^{\frac{1}{2}}
    \left(
    \abs{Z} +\norm{\vv}_{L^p(Z,\RR^d)}^p + \norm{\ww}_{L^p(Z,\RR^d)}^p
    \right)^{\frac{1}{2}},
\end{align*}
thus \eqref{eq:n80} is proved.
\end{proof}

Following Dal Maso and Defranceschi
\cite{dalmasoCorrectorsHomogenizationMonotone1990}, we start by
defining the function
\begin{align*}
M^{\varepsilon} \colon L^p(\Omega,\RR^d) \to L^p(\Omega,\RR^d)
\end{align*}
by 
\begin{align}
\label{eq:8}
  \left( M^{\varepsilon} \vv \right) (x)
  \coloneqq \sum_{i \in I^{\varepsilon}} \charfn_{Y^{\varepsilon}_i}
  (x) \, \frac{1}{\abs{Y^{\varepsilon}_i}} \int_{Y^{\varepsilon}_i}
  \vv(z) \di z,
\end{align}
where $\charfn_{Y^{\varepsilon}_i} (x)$ is the characteristic function
of the set $Y^{\varepsilon}_i$.  It can be shown that $M^{\varepsilon} \vv$ converges
to $\vv$ a.e. on $\Omega$ and strongly in $L^p$, cf. e.g., \cite[Chap. 6,
Prop. 9]{roydenRealAnalysis1988}, that is,
\begin{align}
\label{eq:21}
 \lim_{\varepsilon\to 0}\norm{M^{\varepsilon}\vv - \vv}_{L^p(\Omega,\RR^d)} 
  =0. 
\end{align}
Moreover, we also have
\begin{align*}
\norm{M^{\varepsilon} \vv}_{L^p (\Omega,\RR^d) } \le \norm{\vv}_{L^p (\Omega,\RR^d)}.
\end{align*}

Next, we define the function
\begin{align}
  \label{eq:20}
  \begin{split}
    &\bfp \colon Y \times \RR^d \to
      \RR^d,\qquad
      \bfp (y, \xi)
      \coloneqq \xi + \nabla_y \eta_{\xi} \left( y \right),
  \end{split}
\end{align}
where $\eta_{\xi}$ is the solution of \eqref{eq:13}.
It follows that
\begin{align}
\label{eq:67}
\int_Y \bfa \left( y, \bfp (y, \xi) \right) \cdot\bfp \left( y, \xi \right)
  \di y
  = \int_Y \bfa \left( y, \bfp \left( y, \xi \right) \right)\cdot \xi \di
  y,\qquad \text{ for all } \xi \in \RR^d.
\end{align}

We adapt an important corrector result by Dal Maso and Defranceschi
\cite[Theorem 2.1]{dalmasoCorrectorsHomogenizationMonotone1990} to our setting:

\begin{lemma}
\label{sec:preliminary-results}
We have
\begin{align}
\label{eq:113}
  \lim_{\varepsilon \to 0}\norm{\nabla \varphi^{\varepsilon}(x) - \bfp \left(\frac{x}{\varepsilon},
  M^{\varepsilon}(\nabla \varphi^0(x))\right)}_{L^p(\Omega,\RR^d)}
  = 0.
\end{align}
\end{lemma}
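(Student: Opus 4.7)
The plan is to combine the quantitative $L^p$ bound in \cref{sec:some-inequalities-1} with the monotonicity hypothesis \ref{cond:a-monotonicity} to reduce \eqref{eq:113} to showing that a single ``energy-type'' integral vanishes in the limit, and then to evaluate that integral via two-scale convergence together with the cell equation \eqref{eq:13} and the identity \eqref{eq:67}. Set $\psi^{\varepsilon}(x) := \bfp(x/\varepsilon, M^{\varepsilon}(\nabla \varphi^0)(x))$. Both $\nabla \varphi^{\varepsilon}$ and $\psi^{\varepsilon}$ are bounded in $L^p(\Omega,\RR^d)$, the former by \eqref{eq:32} and the latter by \ref{cond:a-bound}--\ref{cond:a-continuity} applied to the cell problem defining $\bfp$. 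Applying \cref{sec:some-inequalities-1} with $\vv = \nabla \varphi^{\varepsilon}$, $\ww = \psi^{\varepsilon}$, and using \ref{cond:a-monotonicity} to control the integrand of the first factor in \eqref{eq:n80} from above by $\lambda_o^{-1}[\bfa(x/\varepsilon,\nabla \varphi^{\varepsilon}) - \bfa(x/\varepsilon,\psi^{\varepsilon})]\cdot(\nabla \varphi^{\varepsilon} - \psi^{\varepsilon})$, it suffices to prove
\begin{equation*}
I^{\varepsilon} := \int_{\Omega}\bigl[\bfa(x/\varepsilon,\nabla\varphi^{\varepsilon}) - \bfa(x/\varepsilon,\psi^{\varepsilon})\bigr]\cdot\bigl(\nabla \varphi^{\varepsilon} - \psi^{\varepsilon}\bigr)\,\di x \longrightarrow 0.
\end{equation*}

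I would then split $I^{\varepsilon} = A^{\varepsilon} - B^{\varepsilon} - C^{\varepsilon} + D^{\varepsilon}$ into its four bilinear pieces and evaluate each limit. By uniqueness of the cell problem \eqref{eq:13} (guaranteed by \ref{cond:a-monotonicity}), the two-scale corrector from \cref{sec:main-results-7} satisfies $\nabla_y \varphi^1(x,y) = \nabla_y \eta_{\nabla \varphi^0(x)}(y)$, so that $\bfp(y,\nabla\varphi^0(x)) = \nabla \varphi^0(x) + \nabla_y \varphi^1(x,y)$. Since $M^{\varepsilon}(\nabla\varphi^0)\to \nabla\varphi^0$ strongly in $L^p(\Omega,\RR^d)$ by \eqref{eq:21} and $M^{\varepsilon}$ is piecewise constant on the cells $Y_i^{\varepsilon}$, \cref{sec:two-scale-conv-5} yields the strong two-scale convergence $\psi^{\varepsilon}\cv[2]\bfp(y,\nabla\varphi^0(x))$; applying \ref{cond:a-continuity} then gives $\bfa(x/\varepsilon,\psi^{\varepsilon})\cv[2] F^0$, where $F^0(x,y) = \bfa(y,\nabla\varphi^0 + \nabla_y\varphi^1)$ is the two-scale limit identified in \cref{sec:main-results-7}. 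Pairing these strong two-scale limits with the weak two-scale limits $\nabla \varphi^{\varepsilon}\wcv[2]\nabla \varphi^0 + \nabla_y\varphi^1$ and $\bfa(x/\varepsilon,\nabla\varphi^{\varepsilon})\wcv[2] F^0$ shows that each of $B^{\varepsilon}, C^{\varepsilon}, D^{\varepsilon}$ converges to $\frac{1}{\abs{Y}}\int_{\Omega\times Y} F^0\cdot(\nabla \varphi^0 + \nabla_y\varphi^1)\,\di y\,\di x$. For $A^{\varepsilon}$, I would test \eqref{eq:r6} with $\eta = \varphi^{\varepsilon}$ and pass to the weak limit to obtain $\int_{\Omega} f\varphi^0\,\di x$, then rewrite this by testing the homogenized equation \eqref{eq:26} against $\varphi^0$ and using the identity \eqref{eq:67} with $\xi = \nabla \varphi^0(x)$ to recover the same expression. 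Hence $I^{\varepsilon}\to A - B - C + D = 0$.

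The main obstacle will be the rigorous justification of the two strong two-scale convergences for $\psi^{\varepsilon}$ and $\bfa(\cdot/\varepsilon,\psi^{\varepsilon})$, since the natural candidate limit $\bfp(y,\nabla\varphi^0(x))$ need not be admissible in the sense of \cref{def:admissible-function}: under \ref{cond:a-periodic}--\ref{cond:a-continuity} the map $y\mapsto\bfp(y,\xi)$ is only measurable. This is precisely why the averaging operator $M^{\varepsilon}$ is inserted rather than writing $\bfp(x/\varepsilon,\nabla\varphi^0(x))$ directly: on each cell $Y_i^{\varepsilon}$, $\psi^{\varepsilon}$ coincides with $\bfp(\cdot/\varepsilon,\xi_i^{\varepsilon})$ for the constant $\xi_i^{\varepsilon} = \frac{1}{\abs{Y_i^{\varepsilon}}}\int_{Y_i^{\varepsilon}}\nabla\varphi^0\,\di z$, so $\psi^{\varepsilon}$ agrees with $(\MM^{\varepsilon}F)(x,x/\varepsilon)$ for $F(x,y) = \bfp(y,\nabla\varphi^0(x))$ up to an $L^p$-error controlled by the continuity of $\xi\mapsto\bfp(\cdot,\xi)$ from $\RR^d$ into $L^p(Y,\RR^d)$ (a consequence of \ref{cond:a-continuity}--\ref{cond:a-monotonicity} applied to \eqref{eq:13}). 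Then \cref{sec:two-scale-conv-5} delivers the required strong two-scale convergence without any admissibility hypothesis on the limit, bypassing the obstruction and closing the argument.
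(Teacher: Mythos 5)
Your proposal is necessarily a different route from the paper's, because the paper does not prove \cref{sec:preliminary-results} at all: it is taken over, with adapted hypotheses, from Dal Maso--Defranceschi \cite[Theorem 2.1]{dalmasoCorrectorsHomogenizationMonotone1990}, and the paper's own effort goes into the surrounding estimates (\cref{sec:prel-estim-1}, \cref{sec:preliminary-results-3}, \cref{sec:preliminary-results-5}, \cref{sec:preliminary-results-4}) that upgrade this non-explicit corrector to the explicit one in \cref{sec:main-results-1}. What you propose is a self-contained Minty-type energy argument in the two-scale framework: reduce \eqref{eq:113} via \cref{sec:some-inequalities-1} and \ref{cond:a-monotonicity} to $I^{\varepsilon}\to 0$, then evaluate the four bilinear pieces using the weak two-scale limits of \cref{sec:main-results-7}, the identities \eqref{eq:67} and \eqref{eq:110}, and the strong two-scale convergence of $\psi^{\varepsilon}=\bfp(\cdot/\varepsilon,M^{\varepsilon}(\nabla\varphi^0))$. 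This is a sound strategy, and it is not circular: the ingredients you invoke (the identification $F^0=\bfa(y,\nabla\varphi^0+\nabla_y\varphi^1)$, the uniqueness giving \eqref{eq:109}, Visintin's characterization in \cref{sec:preliminary-results-1}/\cref{sec:two-scale-conv-5}, and the continuity of $\xi\mapsto\bfp(\cdot,\xi)$ into $L^p(Y,\RR^d)$) are all established independently of the lemma. What each approach buys: the paper gets \eqref{eq:113} for free from the literature and spends its effort only on what is new; your route re-derives the Dal Maso--Defranceschi result, which has the merit of making the argument self-contained under exactly the hypotheses \ref{cond:a-periodic}--\ref{cond:a-monotonicity}, at the cost of essentially reproducing the quantitative estimates the paper proves anyway.

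Two caveats on where the real work sits in your sketch. First, the identity you use to invoke \cref{sec:two-scale-conv-5} is not exact: $(\MM^{\varepsilon}F)(x,x/\varepsilon)$ with $F(x,y)=\bfp(y,\nabla\varphi^0(x))$ averages $\bfp(\cdot,\nabla\varphi^0(z))$ over the cell, whereas $\psi^{\varepsilon}$ is $\bfp$ evaluated at the cell average of $\nabla\varphi^0$; the discrepancy is controlled exactly by the H\"{o}lder-type continuity estimate \eqref{eq:108} summed over cells (i.e.\ \cref{sec:preliminary-results-3} and \eqref{eq:132}), and you must also treat the boundary cells, where $M^{\varepsilon}(\nabla\varphi^0)=0$, using bounds of the type \eqref{eq:122} and \eqref{eq:134}. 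Second, the claim $\bfa(\cdot/\varepsilon,\psi^{\varepsilon})\cv[2]F^0$ strongly in $L^{p'}$ needs the unfolding argument spelled out: composing with $S^{\varepsilon}$ and using \ref{cond:a-periodic} reduces it to Nemytskii continuity of $\xi\mapsto\bfa(y,\xi)$ from $L^p$ to $L^{p'}$, which does follow from \ref{cond:a-continuity}, and the weak--strong pairing used for $B^{\varepsilon},C^{\varepsilon},D^{\varepsilon}$ should likewise be justified through the composition identity rather than distributional convergence alone, so that the test function $1$ on all of $\Omega$ is legitimate. With these points filled in, your argument closes; as written, it is a correct and genuinely different (more self-contained, two-scale based) proof of a statement the paper simply cites.
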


By comparing \eqref{eq:28} and \eqref{eq:13}, we obtain by uniqueness
that 
\begin{align}
\label{eq:109}
\bfp (y, \nabla \varphi^0(x)) = \nabla \varphi^0(x) + \nabla_y \varphi^1(x,y).
\end{align}

From \eqref{eq:28} and the fact that $\varphi^1$ is periodic with
respect to $y$, we obtain the following identity, similar to
\eqref{eq:67}, that holds for a.e. $x \in \Omega$,
\begin{align}
\label{eq:110}
\int_Y \bfa \left( y, \bfp(y,\nabla \varphi^0(x))\right) \cdot \bfp (y,
  \nabla \varphi^0(x)) \di y
  = \int_Y \bfa \left(y, \bfp(y, \nabla \varphi^0(x))\right) \cdot \nabla \varphi^0(x)
  \di y.
\end{align}

We will need the following generalized version of \cite[Lemma 3.2 and 3.4]{dalmasoCorrectorsHomogenizationMonotone1990}:

\begin{proposition} 
  \label{sec:prel-estim-1}
  There exists $C > 0$ depending on $d, p, \alpha,
\lambda_o, \Lambda_o, \Lambda_{*}$, which were introduced in Section
\ref{ss:setup}, such that for each $\varepsilon > 0$, we have
\begin{subequations}
  \label{eq:105}
  \begin{align}
    \label{eq:106}
    &\norm{\bfp \left( y,M^{\varepsilon} (\nabla \varphi^0)(x)
    \right)}_{L^p(Y_i^{\varepsilon}\times Y,\RR^d)}^p
      \le C \left( \abs{Y_i^{\varepsilon}\times Y} +\norm{M^{\varepsilon
      }(\nabla \varphi^0)}_{L^p(Y_i^{\varepsilon}\times Y,\RR^d)}^p \right),\\
    \label{eq:111}
    &\norm{\bfp \left( y,\nabla \varphi^0(x)
    \right)}_{L^p(Y_i^{\varepsilon}\times Y,\RR^d)}^p
      \le C \left( \abs{Y_i^{\varepsilon}\times Y} +\norm{\nabla
      \varphi^0}_{L^p(Y_i^{\varepsilon}\times Y,\RR^d)}^p \right),\\
    \label{eq:108}
    &\norm{\bfp\left( y,M^{\varepsilon}(\nabla \varphi^0)(x) \right) -
    \bfp \left( y,\nabla \varphi^0(x)
    \right)}_{L^p(Y_i^{\varepsilon}\times Y,\RR^d)}^p
       \\ \nonumber
    &\le C  \left( \abs{Y_i^{\varepsilon}} +
      \norm{M^{\varepsilon}(\nabla
      \varphi^0)}_{L^p(Y_i^{\varepsilon},\RR^d)}^p + \norm{\nabla \varphi^0}_{L^p(Y_i^{\varepsilon},\RR^d)}^p
    \right)^{\frac{2p-\alpha-1}{2p-\alpha}}
     \norm{M^{\varepsilon}(\nabla \varphi^0)-\nabla \varphi^0}_{L^p(Y_i^{\varepsilon},\RR^d)}^{\frac{p}{2p-\alpha}},
\end{align}
\end{subequations}
\end{proposition}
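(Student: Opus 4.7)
My plan is to derive all three estimates from the energy identity for the cell problem \eqref{eq:13} combined with \cref{sec:some-inequalities-1}, the key idea being to apply the final H\"older inequality globally on $Z \coloneqq Y_i^\varepsilon \times Y$ rather than pointwise in $x$.

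For \eqref{eq:106} and \eqref{eq:111}, the core bound I aim for is
\[
\int_Y \abs{\bfp(y,\xi)}^p \di y \le C(1+\abs{\xi}^p), \qquad \xi \in \RR^d.
\]
I would obtain this by testing \eqref{eq:13} with $\eta_\xi$, yielding $\int_Y \bfa(y,\bfp(y,\xi)) \cdot (\bfp(y,\xi)-\xi)\,\di y = 0$; combining with the monotonicity \ref{cond:a-monotonicity} at $(\bfp(y,\xi),0)$ and the boundedness/continuity \ref{cond:a-bound}--\ref{cond:a-continuity} produces $\int_Y (1+\abs{\bfp}^2)^{(p-2)/2}\abs{\bfp}^2\,\di y \le C\abs{\xi}(1+\norm{\bfp}^{p-1}_{L^p(Y,\RR^d)}) + C\norm{\bfp}_{L^p(Y,\RR^d)}$, and Young's inequality absorbs the nonlinear terms on the right. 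Estimate \eqref{eq:106} is then immediate since $M^\varepsilon(\nabla\varphi^0)$ is piecewise constant on each $Y_i^\varepsilon$, and \eqref{eq:111} follows by integrating the pointwise bound over $x\in Y_i^\varepsilon$.

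For the harder estimate \eqref{eq:108}, let $\vv(x,y) \coloneqq \bfp(y, M^\varepsilon(\nabla\varphi^0)(x))$, $\ww(x,y)\coloneqq \bfp(y, \nabla\varphi^0(x))$, and set
\[
\bar{B}\coloneqq\norm{\vv-\ww}^p_{L^p(Z,\RR^d)},\quad T\coloneqq\abs{Y_i^\varepsilon}+\norm{M^\varepsilon(\nabla\varphi^0)}^p_{L^p(Y_i^\varepsilon,\RR^d)}+\norm{\nabla\varphi^0}^p_{L^p(Y_i^\varepsilon,\RR^d)},\quad D\coloneqq\norm{M^\varepsilon(\nabla\varphi^0)-\nabla\varphi^0}^p_{L^p(Y_i^\varepsilon,\RR^d)}.
\]
Applying \cref{sec:some-inequalities-1} on $Z$ gives $\bar{B}\le C\bar{A}^{1/2}\bar{S}^{1/2}$, where $\bar{A}\coloneqq \int_Z(1+\abs{\vv}^2+\abs{\ww}^2)^{(p-2)/2}\abs{\vv-\ww}^2\,\di z$ and, by \eqref{eq:106}--\eqref{eq:111}, $\bar{S}\coloneqq\abs{Z}+\norm{\vv}^p_{L^p(Z)}+\norm{\ww}^p_{L^p(Z)}\le CT$. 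To bound $\bar{A}$, I subtract the cell problems for $\vv$ and $\ww$ (pointwise in $x$), test with $\eta_{M^\varepsilon(\nabla\varphi^0)(x)}-\eta_{\nabla\varphi^0(x)}$ and integrate in $x$; \ref{cond:a-monotonicity} on the LHS and \ref{cond:a-continuity} on the RHS then give
\[
\lambda_o\,\bar{A}\le\Lambda_o\int_Z\abs{M^\varepsilon(\nabla\varphi^0)-\nabla\varphi^0}\,(1+\abs{\vv}^2+\abs{\ww}^2)^{(p-1-\alpha)/2}\abs{\vv-\ww}^\alpha\,\di z.
\]
A H\"older inequality on $Z$ with the three exponents $(p,\,p/\alpha,\,p/(p-1-\alpha))$ (whose reciprocals sum to $1$), together with $\int_Z(1+\abs{\vv}^2+\abs{\ww}^2)^{p/2}\le C\bar{S}\le CT$, yields $\bar{A}\le CD^{1/p}\bar{B}^{\alpha/p}T^{(p-1-\alpha)/p}$. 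Combining with $\bar{B}^2\le C\bar{A}\bar{S}\le C\bar{A}T$ and absorbing gives $\bar{B}^{(2p-\alpha)/p}\le CD^{1/p}T^{(2p-\alpha-1)/p}$, which raised to the power $p/(2p-\alpha)$ is precisely \eqref{eq:108}.

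The main obstacle is the choice of H\"older exponents. A pointwise-in-$x$ derivation produces the bound $\bar{B}\le C\int_{Y_i^\varepsilon}\abs{M^\varepsilon(\nabla\varphi^0)(x)-\nabla\varphi^0(x)}^{p/(2p-\alpha)}\,S(x)\,\di x$ with $S(x)\le C(1+\abs{M^\varepsilon(\nabla\varphi^0)(x)}^p+\abs{\nabla\varphi^0(x)}^p)$ entering to the first power; a subsequent H\"older in $x$ would then require a higher $L^{pq}$-norm of $\nabla\varphi^0$ with $q=(2p-\alpha)/(2p-\alpha-1)>1$ than is available. Performing the H\"older chain globally on $Z$ with the three-exponent choice above is what replaces the exponent $1$ on $T$ by $(p-1-\alpha)/p$, producing the correct final exponent $(2p-\alpha-1)/(2p-\alpha)$. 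The borderline case $\alpha=p-1$, where the third exponent degenerates to $\infty$, is handled by observing that the factor $(1+\abs{\vv}^2+\abs{\ww}^2)^{0}\equiv 1$ drops out and the chain reduces to a two-exponent H\"older yielding the same final exponents.
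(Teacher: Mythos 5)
Your proposal is correct and follows essentially the same route as the paper: the bounds \eqref{eq:106}--\eqref{eq:111} come from the cell-problem energy identity \eqref{eq:67} (resp. \eqref{eq:110}) combined with \ref{cond:a-bound}--\ref{cond:a-monotonicity}, \cref{sec:some-inequalities-1} and Young's inequality, and \eqref{eq:108} from applying \cref{sec:some-inequalities-1} on $Y_i^{\varepsilon}\times Y$, replacing $\vv-\ww$ by $M^{\varepsilon}(\nabla\varphi^0)-\nabla\varphi^0$ via the same identities, and closing with the three-exponent H\"older $(p,\,p/\alpha,\,p/(p-1-\alpha))$, which reproduces the paper's exponents $\frac{2p-\alpha-1}{2p-\alpha}$ and $\frac{p}{2p-\alpha}$ exactly. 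The only differences (testing the subtracted cell problems with $\eta_{\xi_1}-\eta_{\xi_2}$ instead of invoking \eqref{eq:67}, \eqref{eq:110} directly, and obtaining \eqref{eq:111} by integrating a pointwise-in-$x$ bound) are presentational, not substantive.
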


\begin{proof}
  \begin{enumerate}[wide]
\item \emph{Proof of \eqref{eq:106}}

  On $Y_i^{\varepsilon} \times Y$, note that
$M^{\varepsilon}(\nabla \varphi^0)(x)$ is independent of
  $x \in Y_i^{\varepsilon}$ and $y \in Y$ by definition. Thus, on $Y_i^{\varepsilon}$,  we let
  $\xi^i \coloneqq M^{\varepsilon}(\nabla \varphi^0(\cdot)) \in
  \RR^d$. By Fubini's Theorem, estimate
  \eqref{eq:106} can be written as 
\begin{align}
\label{eq:107}
\norm{\bfp \left( y,\xi^i)
    \right)}_{L^p(Y,\RR^d)}^p
      \le C \left( \abs{Y} + \abs{\xi^i}^p \right).
\end{align}

Thus we only need to prove \eqref{eq:107}. The proof of this inequality is similar to
\cite{dalmasoCorrectorsHomogenizationMonotone1990}, abeit our
assumption on $\bfa$ is different. Applying
\cref{sec:some-inequalities-1} and Young's inequality, we obtain
\begin{align*}
  \norm{\bfp \left( \cdot,\xi^{i} \right)}_{L^p(Y,\RR^d)}^p
  &\le C \left(
    \int_{Y} \abs{\bfp (\cdot,\xi^{i})}^2 \left( 1+
    \abs{\bfp(\cdot,\xi^{i})}^2 \right)^{\frac{p-2}{2}} \di y
    \right)^{\frac{1}{2}}
    \left( \abs{Y} + \norm{\bfp
    (\cdot,\xi^{i})}_{L^p(Y,\RR^d)}^p \right)^{\frac{1}{2}}\\
    &\le C \cdot 2C
    \int_{Y} \abs{\bfp (\cdot,\xi^{i})}^2 \left( 1+
      \abs{\bfp(\cdot,\xi^{i})}^2 \right)^{\frac{p-2}{2}} \di y
      +
    C \cdot \frac{1}{2C}\left( \abs{Y} + \norm{\bfp (\cdot,\xi^{i})}_{L^p(Y,\RR^d)}^p \right),
\end{align*}
so by rearranging terms and then using
monotonicity \ref{cond:a-monotonicity}, we have
\begin{align*}
  \norm{\bfp (\cdot,\xi^{i})}_{L^p(Y,\RR^d)}^p
  &\le C \left( \int_{Y} \abs{\bfp(\cdot,\xi^{i})}^2 \left( 1 +
    \abs{\bfp (\cdot,\xi^{i})}^2 \right)^{\frac{p-2}{2}}\di y + \abs{Y}\right) \\
  &\le C \left( \int_{Y} \left[ \bfa (y, \bfp(y, \xi^{i})) - \bfa
    (y,0)\right]\cdot \bfp(y,\xi^{i}) \di y  + \abs{Y}\right).
\end{align*}
By \eqref{eq:67}, the boundedness condition \ref{cond:a-bound}, and the continuity condition \ref{cond:a-continuity}, we obtain 
\begin{align*}
  \norm{\bfp (\cdot,\xi^{i})}_{L^p(Y,\RR^d)}^p
  &\le C \left(
    \int_Y \abs{\bfa (y, \bfp(y,\xi^{i})) \cdot \xi^{i}} \di y
    + \int_Y \abs{\bfa(y,0)\cdot \bfp (y,\xi^{i})} \di y + \abs{Y}
    \right)\\
  &\le C
    \left( \int_Y \left( 1 + \abs{\bfp
    (y,\xi^{i})}^2\right)^{\frac{p-1}{2}} \abs{\xi^{i}} \di y
    + \int_Y \abs{\bfp(\cdot,\xi^{i})} \di y + \abs{Y}
    \right).
\end{align*}
Let $\nu  > 0$ and $\mu > 0$ to be specified later. Applying Young's
inequality, we have
\begin{align*}
  \norm{\bfp (\cdot,\xi^{i})}_{L^p(Y,\RR^d)}^p
  &\le
    C \left(
    \int_Y   \nu^{\frac{p}{p-1}}\left( 1 +  \abs{\bfp(y,\xi^{i})}^2
    \right)^{\frac{p}{2}} \di y + \int_Y \frac{1}{ \nu^p}\abs{\xi^{i}}^p  \di y
  \right.\\
    &\quad+ \left.
    \int_Y  {\mu^p}\abs{\bfp(\cdot,\xi^{i})}^p \di y+ \int_Y\frac{1}{\mu^{\frac{p}{p-1}}} \di y
    +\abs{Y}
      \right)\\
   &\le
    C \left(
    \int_Y  \nu^{\frac{p}{p-1}}\left( 1 +  \abs{\bfp(y,\xi^{i})}^p
    \right)\di y + \int_Y\frac{1}{ \nu^p}\abs{\xi^{i}}^p  \di y
  \right.\\
    &\quad+ \left.
    \int_Y  {\mu^p}\abs{\bfp(\cdot,\xi^{i})}^p \di y+ \int_Y\frac{1}{\mu^{\frac{p}{p-1}}}  \di y
    +\abs{Y}
      \right) \\
  &\le C
    \left( \left( \nu^{\frac{p}{p-1}} + \mu^p \right)
    \norm{\bfp(\cdot,\xi^{i})}_{L^p(Y,\RR^d)}^p + \frac{1}{\nu^p}
    \norm{\xi^{i}}_{L^p(Y,\RR^d)}^p + \frac{\abs{Y}}{\mu^{\frac{p}{p-1}}} + \abs{Y} \right).
\end{align*}
Choose $\nu$ and $\mu$ small enough such that $\left(
  \nu^{\frac{p}{p-1}} + \mu^p \right) \le \frac{1}{2C}$, we conclude that 
\begin{align*}
  \norm{\bfp(\cdot,\xi^{i})}_{L^p(Y,\RR^d)}^p
  \le
  C \left( \abs{Y} + \norm{\xi^{i}}_{L^p(Y,\RR^d)}^p \right),
\end{align*}
which is  \eqref{eq:107}.

\item \emph{Proof of \eqref{eq:111}}
  
Applying \cref{sec:some-inequalities-1} and Young's
inequality, we obtain
\begin{align*}
  &\norm{\bfp \left( y,\nabla \varphi^0 (x) \right)}_{L^p(Y_i^{\varepsilon}\times Y,\RR^d)}^p\\
  &\le C \left(
    \int_{Y_i^{\varepsilon}\times Y} \abs{\bfp (y,\nabla \varphi^0 (x))}^2 \left( 1+
    \abs{\bfp(y,\nabla \varphi^0 (x))}^2 \right)^{\frac{p-2}{2}} \di x \di y
    \right)^{\frac{1}{2}}\\
    &\quad \cdot \left( \abs{Y_i^{\varepsilon}\times Y} + \norm{\bfp
    (y,\nabla \varphi^0 (x))}_{L^p(Y_i^{\varepsilon}\times Y,\RR^d)}^p \right)^{\frac{1}{2}}\\
    &\le C \cdot 2C
    \int_{Y_i^{\varepsilon}\times Y} \abs{\bfp (y,\nabla \varphi^0 (x))}^2 \left( 1+
      \abs{\bfp(y,\nabla \varphi^0 (x))}^2 \right)^{\frac{p-2}{2}} \di x \di y
  \\
  &\quad+
    C \cdot \frac{1}{2C}\left( \abs{Y_i^{\varepsilon}\times Y} + \norm{\bfp (y,\nabla \varphi^0 (x))}_{L^p(Y_i^{\varepsilon}\times Y,\RR^d)}^p \right),
\end{align*}
so by rearranging terms and then using
monotonicity \ref{cond:a-monotonicity}, we have
\begin{align*}
  &\norm{\bfp (y,\nabla \varphi^0 (x))}_{L^p(Y_i^{\varepsilon}\times Y,\RR^d)}^p\\
  &\le C \left( \int_{Y_i^{\varepsilon}\times Y} \abs{\bfp(y,\nabla \varphi^0 (x))}^2 \left( 1 +
    \abs{\bfp (y,\nabla \varphi^0 (x))}^2 \right)^{\frac{p-2}{2}}\di x \di y + \abs{Y_i^{\varepsilon}\times Y}\right) \\
  &\le C \left( \int_{Y_i^{\varepsilon}\times Y} \left[ \bfa (y, \bfp(y, \nabla \varphi^0 (x))) - \bfa
    (y,0)\right]\cdot \bfp(y,\nabla \varphi^0 (x)) \di x \di y  + \abs{Y_i^{\varepsilon}\times Y}\right).
\end{align*}
By \eqref{eq:110}, the boundedness condition \ref{cond:a-bound}, and the continuity condition \ref{cond:a-continuity}, we obtain 
\begin{align*}
  &\norm{\bfp (y,\nabla \varphi^0 (x))}_{L^p(Y_i^{\varepsilon}\times Y,\RR^d)}^p\\
  &\le C \left(
    \int_{Y_i^{\varepsilon}\times Y} \abs{\bfa (y, \bfp(y,\nabla \varphi^0 (x))) \cdot \nabla \varphi^0 (x)} \di x \di y
    + \int_{Y_i^{\varepsilon}\times Y} \abs{\bfa(y,0)\cdot \bfp (y,\nabla \varphi^0 (x))} \di x \di y + \abs{{Y_i^{\varepsilon}\times Y}}
    \right)\\
  &\le C
    \left( \int_{Y_i^{\varepsilon}\times Y} \left( 1 + \abs{\bfp
    (y,\nabla \varphi^0 (x))}^2\right)^{\frac{p-1}{2}} \abs{\nabla \varphi^0 (x)} \di x \di y
    + \int_{Y_i^{\varepsilon}\times Y} \abs{\bfp(y,\nabla \varphi^0 (x))} \di x \di y + \abs{{Y_i^{\varepsilon}\times Y}}
    \right).
\end{align*}
Let $\nu  > 0$ and $\mu > 0$ to be specified later. Applying Young's
inequality, we have
\begin{align*}
  &\norm{\bfp (y,\nabla \varphi^0 (x))}_{L^p({Y_i^{\varepsilon}\times Y},\RR^d)}^p\\
  &\le
    C \left(
    \int_{Y_i^{\varepsilon}\times Y}   \nu^{\frac{p}{p-1}}\left( 1 +  \abs{\bfp(y,\nabla \varphi^0 (x))}^2
    \right)^{\frac{p}{2}} \di x \di y + \int_{Y_i^{\varepsilon}\times Y} \frac{1}{ \nu^p}\abs{\nabla \varphi^0 (x)}^p  \di x \di y
  \right.\\
    &\quad+ \left.
    \int_{Y_i^{\varepsilon}\times Y}  {\mu^p}\abs{\bfp(y,\nabla \varphi^0 (x))}^p \di x \di y+ \int_{Y_i^{\varepsilon}\times Y}\frac{1}{\mu^{\frac{p}{p-1}}} \di x \di y
    +\abs{{Y_i^{\varepsilon}\times Y}}
      \right)\\
   &\le
    C \left(
    \int_{Y_i^{\varepsilon}\times Y}  \nu^{\frac{p}{p-1}}\left( 1 +  \abs{\bfp(y,\nabla \varphi^0 (x))}^p
    \right)\di x \di y + \int_{Y_i^{\varepsilon}\times Y}\frac{1}{ \nu^p}\abs{\nabla \varphi^0 (x)}^p  \di x \di y
  \right.\\
    &\quad+ \left.
    \int_{Y_i^{\varepsilon}\times Y}  {\mu^p}\abs{\bfp(y,\nabla \varphi^0 (x))}^p \di x \di y+ \int_{Y_i^{\varepsilon}\times Y}\frac{1}{\mu^{\frac{p}{p-1}}}  \di x \di y
    +\abs{{Y_i^{\varepsilon}\times Y}}
      \right) \\
  &\le C
    \left( \left( \nu^{\frac{p}{p-1}} + \mu^p \right)
    \norm{\bfp(y,\nabla \varphi^0 (x))}_{L^p({Y_i^{\varepsilon}\times Y},\RR^d)}^p + \frac{1}{\nu^p}
    \norm{\nabla \varphi^0 (x)}_{L^p({Y_i^{\varepsilon}\times Y},\RR^d)}^p + \frac{\abs{{Y_i^{\varepsilon}\times Y}}}{\mu^{\frac{p}{p-1}}} + \abs{{Y_i^{\varepsilon}\times Y}} \right).
\end{align*}
Choose $\nu$ and $\mu$ small enough such that $\left(
  \nu^{\frac{p}{p-1}} + \mu^p \right) \le \frac{1}{2C}$, we conclude that 
\begin{align*}
  \norm{\bfp(y,\nabla \varphi^0 (x))}_{L^p({Y_i^{\varepsilon}\times Y},\RR^d)}^p
  \le
  C \left( \abs{{Y_i^{\varepsilon}\times Y}} + \norm{\nabla \varphi^0 (x)}_{L^p({Y_i^{\varepsilon}\times Y},\RR^d)}^p \right),
\end{align*}
which is  \eqref{eq:111}.

\item \emph{Proof of \eqref{eq:108}}
  
By \cref{sec:some-inequalities-1}, we have 
\begin{align*}
&\norm{\bfp\left( y,M^{\varepsilon} (\nabla \varphi^0)(x) \right) -
    \bfp \left( y,\nabla \varphi^0 (x)
  \right)}_{L^p({Y_i^{\varepsilon}\times Y},\RR^d)}^p\\
  &\le
     C \left(
    \int_{{Y_i^{\varepsilon}\times Y}}  \left( 1 + \abs{\bfp\left( y,M^{\varepsilon} (\nabla \varphi^0)(x) \right)}^2 + \abs{\bfp\left( y,\nabla \varphi^0 (x) \right)}^2
    \right)^{\frac{p-2}{2}}\right.\\
  &\quad \quad \cdot \left. \vphantom{\int_{Y_i^{\varepsilon}\times Y}}\abs{\bfp\left( y,M^{\varepsilon} (\nabla \varphi^0)(x) \right) -\bfp\left( y,\nabla \varphi^0 (x) \right)}^2 \di x \di y
    \right)^{\frac{1}{2}}\\
  &\quad \cdot \left(
    1 +\norm{\bfp\left( y,M^{\varepsilon} (\nabla \varphi^0)(x) \right)}_{L^p({Y_i^{\varepsilon}\times Y},\RR^d)}^p + \norm{\bfp\left( y,\nabla \varphi^0 (x) \right)}_{L^p({Y_i^{\varepsilon}\times Y},\RR^d)}^p
    \right)^{\frac{1}{2}}.
\end{align*}
Using monotonicity \ref{cond:a-monotonicity}, \eqref{eq:106}, and
\eqref{eq:111}, we obtain
\begin{align}
  \label{eq:n81}
  \begin{split}
&\norm{\bfp\left( y,M^{\varepsilon} (\nabla \varphi^0)(x) \right) -
    \bfp \left( y,\nabla \varphi^0 (x)
  \right)}_{L^p({Y_i^{\varepsilon}\times Y},\RR^d)}^p\\
  &\le
    C \left( \int_{Y_i^{\varepsilon}\times Y}
    \left[ \bfa(y,\bfp(y, M^{\varepsilon} (\nabla \varphi^0)(x))) -
    \bfa (y,\bfp(y,\nabla \varphi^0 (x))) \right] \right.\\
    &\quad \quad
    \cdot \left. \vphantom{\int_{Y_i^{\varepsilon}\times Y}}\left( \bfp(y,M^{\varepsilon} (\nabla \varphi^0)(x)) - \bfp (y,\nabla \varphi^0 (x)) \right) \di x \di y
    \right)^{\frac{1}{2}}\\
  &\quad \cdot \left(
    \abs{{Y_i^{\varepsilon}\times Y}} + \norm{M^{\varepsilon} (\nabla \varphi^0)(x)}_{L^p({Y_i^{\varepsilon}\times Y},\RR^d)}^p + \norm{\nabla \varphi^0 (x)}_{L^p({Y_i^{\varepsilon}\times Y},\RR^d)}^p
    \right)^{\frac{1}{2}}\\
  &\le
    C \left( \int_{Y_i^{\varepsilon}\times Y}
    \left[ \bfa(y,\bfp(y, M^{\varepsilon} (\nabla \varphi^0)(x))) - \bfa (y,\bfp(y,\nabla \varphi^0 (x))) \right]
    \cdot \left( M^{\varepsilon} (\nabla \varphi^0)(x) - \nabla \varphi^0 (x) \right) \di x \di y
    \right)^{\frac{1}{2}}\\
  &\quad \cdot \left(
    \abs{{Y_i^{\varepsilon}\times Y}} + \norm{M^{\varepsilon} (\nabla \varphi^0)(x)}_{L^p({Y_i^{\varepsilon}\times Y},\RR^d)}^p + \norm{\nabla \varphi^0 (x)}_{L^p({Y_i^{\varepsilon}\times Y},\RR^d)}^p
    \right)^{\frac{1}{2}},
  \end{split}
\end{align}
where we use \eqref{eq:67} and \eqref{eq:110} in the last estimate. Observe that
by continuity \ref{cond:a-continuity} and H\"{o}lder's inequality
\begin{align*} 
  &\int_{Y_i^{\varepsilon}\times Y}
    \left[ \bfa(y,\bfp(y, M^{\varepsilon} (\nabla \varphi^0)(x))) - \bfa (y,\bfp(y,\nabla \varphi^0 (x))) \right]
    \cdot \left( M^{\varepsilon} (\nabla \varphi^0)(x) - \nabla \varphi^0 (x) \right) \di x \di y\\
  &~\le
    C \int_{Y_i^{\varepsilon}\times Y} \left( 1 + \abs{ \bfp (y,M^{\varepsilon} (\nabla \varphi^0)(x))}^2 +
    \abs{\bfp(y,\nabla \varphi^0 (x))}^2
    \right)^{\frac{p-1-\alpha}{2}}\\
  &~\quad \cdot 
    \abs{\bfp (y,M^{\varepsilon} (\nabla \varphi^0)(x)) -\bfp(y,\nabla \varphi^0 (x))}^{\alpha} \abs{M^{\varepsilon} (\nabla \varphi^0)(x) -
    \nabla \varphi^0 (x)}\di x \di y\\
  &~\le
    C \left( \int_{Y_i^{\varepsilon}\times Y} \left( 1 + \abs{ \bfp (y,M^{\varepsilon} (\nabla \varphi^0)(x))}^2 +
    \abs{\bfp(y,\nabla \varphi^0 (x))}^2 \right)^{\frac{p}{2}} \di x \di y
    \right)^{\frac{p-\alpha-1}{p}}\\
  &~\quad \cdot\left( \int_{Y_i^{\varepsilon}\times Y}
    \abs{\bfp (y,M^{\varepsilon} (\nabla \varphi^0)(x)) -\bfp(y,\nabla \varphi^0 (x))}^{p}
    \di x \di y \right)^{\frac{\alpha}{p}}
    \left( \int_{Y_i^{\varepsilon}\times Y}
    \abs{M^{\varepsilon} (\nabla \varphi^0)(x) -
    \nabla \varphi^0 (x)}^{p}\di x \di y \right)^{\frac{1}{p}}\\
  &~\le
    C \left( \int_{Y_i^{\varepsilon}\times Y} \left( 1 + \abs{ \bfp (y,M^{\varepsilon} (\nabla \varphi^0)(x))}^p +
    \abs{\bfp(y,\nabla \varphi^0 (x))}^p \right)\di x \di y
    \right)^{\frac{p-\alpha-1}{p}}\\
  &~\quad \cdot
    \norm{\bfp(y,M^{\varepsilon} (\nabla \varphi^0)(x)) -
    \bfp(y,\nabla \varphi^0 (x))}_{L^p(\Omega,\RR^d)}^{\alpha}
    \norm{M^{\varepsilon} (\nabla \varphi^0)(x)-\nabla \varphi^0 (x)}_{L^p({Y_i^{\varepsilon}\times Y},\RR^d)}\\
  &~\le
    C \left( \abs{{Y_i^{\varepsilon}\times Y}} + \norm{M^{\varepsilon} (\nabla \varphi^0)(x)}_{L^p({Y_i^{\varepsilon}\times Y},\RR^d)}^p + \norm{\nabla \varphi^0 (x)}_{L^p({Y_i^{\varepsilon}\times Y},\RR^d)}^p
    \right)^{\frac{p-\alpha-1}{p}}\\
    &~\quad \cdot \norm{\bfp(y,M^{\varepsilon} (\nabla \varphi^0)(x)) - \bfp(y,\nabla \varphi^0 (x))}_{L^p({Y_i^{\varepsilon}\times Y},\RR^d)}^{\alpha} \norm{M^{\varepsilon} (\nabla \varphi^0)(x)-\nabla \varphi^0 (x)}_{L^p({Y_i^{\varepsilon}\times Y},\RR^d)},
\end{align*}
where we also use \eqref{eq:106} and \eqref{eq:111} in the last inequality. Substituting
to \eqref{eq:n81}, we obtain 
\begin{align*}
&\norm{\bfp\left( y,M^{\varepsilon} (\nabla \varphi^0)(x) \right) -
    \bfp \left( y,\nabla \varphi^0 (x)
  \right)}_{L^p({Y_i^{\varepsilon}\times Y},\RR^d)}^p\\
  &\le
    C \left( \abs{{Y_i^{\varepsilon}\times Y}} + \norm{M^{\varepsilon} (\nabla \varphi^0)(x)}_{L^p({Y_i^{\varepsilon}\times Y},\RR^d)}^p + \norm{\nabla \varphi^0 (x)}_{L^p({Y_i^{\varepsilon}\times Y},\RR^d)}^p
    \right)^{\frac{p-\alpha-1}{2p}}\\
  &\quad \cdot
    \norm{\bfp(y,M^{\varepsilon} (\nabla \varphi^0)(x)) -
    \bfp(y,\nabla \varphi^0 (x))}_{L^p(\Omega,\RR^d)}^{\frac{\alpha}{2}}
    \norm{M^{\varepsilon} (\nabla \varphi^0)(x)-\nabla \varphi^0 (x)}_{L^p({Y_i^{\varepsilon}\times Y},\RR^d)}^{\frac{1}{2}}
    \\
&\quad
  \cdot \left( \abs{{Y_i^{\varepsilon}\times Y}} + \norm{M^{\varepsilon} (\nabla \varphi^0)(x)}_{L^p({Y_i^{\varepsilon}\times Y},\RR^d)}^p + \norm{\nabla \varphi^0 (x)}_{L^p({Y_i^{\varepsilon}\times Y},\RR^d)}^p \right)^{\frac{1}{2}},
\end{align*}
so
\begin{align*}
&\norm{\bfp\left( y,M^{\varepsilon} (\nabla \varphi^0)(x) \right) -
    \bfp \left( y,\nabla \varphi^0 (x)
  \right)}_{L^p({Y_i^{\varepsilon}\times Y},\RR^d)}^{\frac{2p-\alpha}{2}}\\
  &\le
    C\left( \abs{{Y_i^{\varepsilon}\times Y}} + \norm{M^{\varepsilon} (\nabla \varphi^0)(x)}_{L^p({Y_i^{\varepsilon}\times Y},\RR^d)}^p + \norm{\nabla \varphi^0 (x)}_{L^p({Y_i^{\varepsilon}\times Y},\RR^d)}^p
    \right)^{\frac{2p-\alpha-1}{2p}}\\
    &\quad \cdot \norm{M^{\varepsilon} (\nabla \varphi^0)(x)-\nabla \varphi^0 (x)}_{L^p({Y_i^{\varepsilon}\times Y},\RR^d)}^{\frac{1}{2}},
\end{align*}
which implies \eqref{eq:108}.
\end{enumerate}
\end{proof}

\begin{proposition}
  \label{sec:preliminary-results-3}
  Let $\Omega' \subset\subset \Omega$, there exists $C > 0$ depending
  on $d, p, \alpha, \lambda_o, \Lambda_o, \Lambda_{*},\diam(\Omega')$,
  which were introduced in Section \ref{ss:setup}, such that for each
  $\varepsilon > 0$, we have
\begin{align}
\label{eq:117}
    \begin{split}
    &\norm{\bfp\left( y,M^{\varepsilon}(\nabla \varphi^0)(x) \right) -
    \bfp \left( y,\nabla \varphi^0(x)
    \right)}_{L^p(  \Omega'\times Y,\RR^d)}^p
       \\ 
    &\quad\le C  \left( \abs{\Omega} +
      \norm{M^{\varepsilon}(\nabla
      \varphi^0)}_{L^p(\Omega,\RR^d)}^p +
      \norm{\nabla \varphi^0}_{L^p(\Omega,\RR^d)}^p
    \right)^{\frac{2p-\alpha-1}{2p-\alpha}}\\
     &\quad \quad \cdot\norm{M^{\varepsilon}(\nabla \varphi^0)-\nabla
      \varphi^0}_{L^p(\Omega,\RR^d)}^{\frac{p}{2p-\alpha}}.
    \end{split}
\end{align}
\end{proposition}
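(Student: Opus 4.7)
This proposition is the global counterpart of the cellwise estimate \eqref{eq:108} from \cref{sec:prel-estim-1}, so the plan is simply to sum \eqref{eq:108} over the cells $Y_i^\varepsilon$ that touch $\Omega'$ and apply a discrete Hölder inequality to repackage the resulting sums into the $L^p(\Omega)$ norms on the right-hand side.

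First, since $\Omega' \subset\subset \Omega$, I would fix $\varepsilon_0 > 0$ depending on $\dist(\Omega',\partial \Omega)$ and $\diam(\Omega')$ so that, for all $\varepsilon < \varepsilon_0$, the set
$I^\varepsilon_{\Omega'} \coloneqq \{i \in I^\varepsilon : Y_i^\varepsilon \cap \Omega' \neq \varnothing\}$
is nonempty, each cell $Y_i^\varepsilon$ with $i \in I^\varepsilon_{\Omega'}$ sits inside $\Omega$, and $\Omega' \subset \bigcup_{i \in I^\varepsilon_{\Omega'}} Y_i^\varepsilon$. For $\varepsilon \ge \varepsilon_0$, the estimate is trivial after absorbing the finitely many cases into the constant (this is where the dependence on $\diam(\Omega')$ is convenient). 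Writing $u \coloneqq M^\varepsilon(\nabla\varphi^0)$, $v \coloneqq \nabla\varphi^0$, and setting
\[
A_i \coloneqq \abs{Y_i^\varepsilon} + \norm{u}_{L^p(Y_i^\varepsilon,\RR^d)}^p + \norm{v}_{L^p(Y_i^\varepsilon,\RR^d)}^p,\qquad
B_i \coloneqq \norm{u-v}_{L^p(Y_i^\varepsilon,\RR^d)}^p,
\]
the cellwise bound \eqref{eq:108} reads $\norm{\bfp(y,u)-\bfp(y,v)}_{L^p(Y_i^\varepsilon \times Y,\RR^d)}^p \le C\, A_i^{s_1} B_i^{s_2}$ with $s_1 \coloneqq \frac{2p-\alpha-1}{2p-\alpha}$ and $s_2 \coloneqq \frac{p}{2p-\alpha}$.

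Second, by disjointness of the cells and the covering property,
\[
\norm{\bfp(y,u)-\bfp(y,v)}_{L^p(\Omega' \times Y,\RR^d)}^p
\le \sum_{i \in I^\varepsilon_{\Omega'}} \norm{\bfp(y,u)-\bfp(y,v)}_{L^p(Y_i^\varepsilon \times Y,\RR^d)}^p
\le C \sum_{i \in I^\varepsilon_{\Omega'}} A_i^{s_1} B_i^{s_2}.
\]
The key step is to apply Hölder's inequality for sums with conjugate exponents $q \coloneqq \frac{2p-\alpha}{2p-\alpha-1}$ and $q' \coloneqq 2p-\alpha$, chosen precisely so that $s_1 q = 1$ and $s_2 q' = p$:
\[
\sum_i A_i^{s_1} B_i^{s_2} \le \Bigl(\sum_i A_i\Bigr)^{s_1}\Bigl(\sum_i B_i^{p}\Bigr)^{\frac{1}{2p-\alpha}}.
\]
Since $p \ge 1$ and $B_i \ge 0$, the elementary $\ell^p$–$\ell^1$ comparison $\sum_i B_i^p \le (\sum_i B_i)^p$ then yields $\sum_i A_i^{s_1} B_i^{s_2} \le (\sum_i A_i)^{s_1}(\sum_i B_i)^{s_2}$.

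Third, the disjointness of the cells together with their inclusion in $\Omega$ gives
\[
\sum_{i \in I^\varepsilon_{\Omega'}} A_i \le \abs{\Omega} + \norm{u}_{L^p(\Omega,\RR^d)}^p + \norm{v}_{L^p(\Omega,\RR^d)}^p,
\qquad
\sum_{i \in I^\varepsilon_{\Omega'}} B_i \le \norm{u-v}_{L^p(\Omega,\RR^d)}^p,
\]
and substituting delivers \eqref{eq:117}. The only subtle step is the bookkeeping for the Hölder exponents and verifying that the inequality $\sum B_i^p \le (\sum B_i)^p$ goes in the right direction for $p \ge 1$; everything else is a routine summation of the cellwise estimate and a covering argument near $\partial\Omega'$.
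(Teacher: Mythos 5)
Your overall strategy — sum the cellwise estimate \eqref{eq:108} over the cells covering $\Omega'$ and repackage via a discrete H\"{o}lder inequality — is exactly the paper's argument, but your exponent bookkeeping is off, and as written the chain does not produce \eqref{eq:117}. With your definition $B_i \coloneqq \norm{u-v}_{L^p(Y_i^{\varepsilon},\RR^d)}^p$, the estimate \eqref{eq:108} reads
\begin{align*}
\norm{\bfp(y,u)-\bfp(y,v)}_{L^p(Y_i^{\varepsilon}\times Y,\RR^d)}^p
\le C\, A_i^{s_1}\, \norm{u-v}_{L^p(Y_i^{\varepsilon},\RR^d)}^{\frac{p}{2p-\alpha}}
= C\, A_i^{s_1}\, B_i^{\frac{1}{2p-\alpha}},
\end{align*}
not $C A_i^{s_1} B_i^{s_2}$ with $s_2=\frac{p}{2p-\alpha}$: since $B_i$ already carries the $p$-th power, $B_i^{s_2}=\norm{u-v}_{L^p(Y_i^{\varepsilon})}^{p^2/(2p-\alpha)}$, which overstates the cellwise bound by a factor $p$ in the exponent. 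Carrying your chain to the end, the last factor you obtain after substituting $\sum_i B_i \le \norm{u-v}_{L^p(\Omega,\RR^d)}^p$ is $\norm{u-v}_{L^p(\Omega,\RR^d)}^{p^2/(2p-\alpha)}$, whereas \eqref{eq:117} has the exponent $\frac{p}{2p-\alpha}$; so the final ``substituting delivers \eqref{eq:117}'' does not go through. The auxiliary step $\sum_i B_i^p \le (\sum_i B_i)^p$ is a symptom of the same confusion: with the correct cellwise exponent one applies H\"{o}lder with the conjugate pair $\bigl(\tfrac{2p-\alpha}{2p-\alpha-1},\,2p-\alpha\bigr)$ directly to $A_i^{s_1} B_i^{1/(2p-\alpha)}$, so that the second sum is exactly $\sum_i B_i \le \norm{u-v}_{L^p(\Omega,\RR^d)}^p$ and no $\ell^p$--$\ell^1$ comparison is needed. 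This is precisely the paper's proof (with $\gamma=\tfrac{1}{2p-\alpha}$, $a_i^{1-\gamma}b_i^{\gamma}$ summed by H\"{o}lder), so your argument is repairable by fixing the exponent on $B_i$ and deleting the extra comparison, but as written it proves a different inequality.

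A secondary caveat: your dismissal of the regime $\varepsilon\ge\varepsilon_0$ as ``trivial, absorb into the constant'' is not quite right, since there are infinitely many such $\varepsilon$ and the admissible constant may not depend on $\varphi^0$ or $f$; the paper sidesteps this by (implicitly) working with $\varepsilon$ small enough that $\Omega'\subset\bigcup_{i\in I^{\varepsilon}}Y_i^{\varepsilon}\subset\Omega$, which is all that is used later, so if you want to cover all $\varepsilon>0$ you would need a genuine (if crude) argument based on \eqref{eq:106} and \eqref{eq:111} rather than an appeal to finiteness.
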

\begin{proof}
Let $\gamma \coloneqq \frac{1}{2p - \alpha}. $
For a fixed $\Omega' \subset\subset \Omega$, choose $\varepsilon > 0$ small
enough such that 
\begin{align*}
\Omega' \subset \bigcup_{i \in I^{\varepsilon}}
  Y_i^{\varepsilon}\subset \Omega,
\end{align*}
where $I^{\varepsilon}$ is defined in \eqref{eq:116}.
By \cref{sec:prel-estim-1}, 
 we obtain
\begin{align}
  \label{eq:82}
  \begin{split}
    &\norm{\bfp\left( y,M^{\varepsilon}(\nabla \varphi^0)(x) \right) -
    \bfp \left( y,\nabla \varphi^0(x)
    \right)}_{L^p(\Omega'\times Y,\RR^d)}^p\\
    &\le \norm{\bfp\left( y,M^{\varepsilon}(\nabla \varphi^0)(x) \right) -
    \bfp \left( y,\nabla \varphi^0(x)
    \right)}^p_{L^p
      \left(\bigcup_{i \in I^{\varepsilon}}
      Y_i^{\varepsilon},\RR^d\right)}
    \\
    &= \sum_{i \in I^{\varepsilon} } \norm{\bfp\left( y,M^{\varepsilon}(\nabla \varphi^0)(x) \right) -
    \bfp \left( y,\nabla \varphi^0(x)
    \right)}^p_{L^p
      (Y_i^{\varepsilon}\times Y,\RR^d)} \\
    &\le
      C \sum_{i \in I^{\varepsilon} }  \left(
      \abs{Y_i^{\varepsilon}} + \norm{M^{\varepsilon}(\nabla
      \varphi^0)}_{L^p(Y_i^{\varepsilon},\RR^d)}^p + \norm{\nabla \varphi^0}_{L^p(Y_i^{\varepsilon},\RR^d)}^p
      \right)^{1-\gamma}
      \norm{M^{\varepsilon}(\nabla \varphi^0)-\nabla
      \varphi^0}_{L^p(Y_i^{\varepsilon},\RR^d)}^{p\gamma}
  \end{split}
\end{align}

To estimate the last term, for $i\in
I^{\varepsilon}$, we define
\begin{align*}
  a_i
  &\coloneqq \abs{Y_i^{\varepsilon}} + \norm{M^{\varepsilon}(\nabla
      \varphi^0)}_{L^p(Y_i^{\varepsilon},\RR^d)}^p + \norm{\nabla
    \varphi^0}_{L^p(Y_i^{\varepsilon},\RR^d)}^p,\\
  b_i
  &\coloneqq
      \norm{M^{\varepsilon}(\nabla \varphi^0)-\nabla
      \varphi^0}_{L^p(Y_i^{\varepsilon},\RR^d)}^p,
\end{align*}
then
\begin{align}
  \label{eq:118}
  \begin{split}
  \sum_{i \in I^{\varepsilon} } a_i
  &\le \abs{\Omega} + \norm{M^{\varepsilon}(\nabla
      \varphi^0)}_{L^p(\Omega,\RR^d)}^p + \norm{\nabla
    \varphi^0}_{L^p(\Omega,\RR^d)}^p,\\
  \sum_{i \in I^{\varepsilon} } b_i
  &\le
      \norm{M^{\varepsilon}(\nabla \varphi^0)-\nabla
      \varphi^0}_{L^p(\Omega,\RR^d)}^p.
  \end{split}
\end{align}

Observe that $a_i,b_i \ge 0$ and $\gamma \in (0,1)$.  Let
$s \coloneqq \frac{1}{\gamma}$, $r \coloneqq \frac{1}{1-\gamma}$,
$c_i \coloneqq a_i^{1-\gamma}$, and $d_i \coloneqq b_i^{\gamma}$. Then
$\frac{1}{s} + \frac{1}{r} = 1$ and
by using H\"{o}lder's inequality with respect to the counting
measure, we obtain
\begin{align*}
\sum_{i \in I^{\varepsilon}} c_i d_i \le \left( \sum_{i \in I^{\varepsilon}} c_i^r
  \right)^{\frac{1}{r}} \left( \sum_{i \in I^{\varepsilon}} d_i^s \right)^{\frac{1}{s}},
\end{align*}
or equivalently, 
\begin{align}
\label{eq:86}
  \sum_{i \in I^{\varepsilon}} a_i^{1-\gamma}b_i^{\gamma}
  &\le \left( \sum_{i \in I^{\varepsilon}} a_i
    \right)^{1-\gamma} \left( \sum_{i \in I^{\varepsilon}} b_i \right)^{\gamma}.
\end{align}

From \eqref{eq:82}, \eqref{eq:118}, and \eqref{eq:86}, we conclude
\begin{align*}
  \begin{split}
&\norm{\bfp\left( y,M^{\varepsilon}(\nabla \varphi^0)(x) \right) -
    \bfp \left( y,\nabla \varphi^0(x)
    \right)}_{L^p(\Omega'\times Y,\RR^d)}^p\\
    &\le
      C \left( \abs{\Omega} + \norm{M^{\varepsilon}(\nabla
      \varphi^0)}_{L^p(\Omega,\RR^d)}^p + \norm{\nabla
      \varphi^0}_{L^p(\Omega,\RR^d)}^p \right)^{1-\gamma}
       \norm{M^{\varepsilon}(\nabla \varphi^0)-\nabla
      \varphi^0}_{L^p(\Omega,\RR^d)}^{p\gamma}.
  \end{split}
\end{align*}
\end{proof}

\begin{proposition}
\label{sec:preliminary-results-5}
We have 
\begin{align}
\label{eq:132}
  \lim_{\varepsilon \to 0}
  \norm{\bfp\left( y,M^{\varepsilon}(\nabla \varphi^0)(x) \right) -
    \bfp \left( y,\nabla \varphi^0(x)
  \right)}_{L^p(\Omega\times Y,\RR^d)}
  = 0.
\end{align}
\end{proposition}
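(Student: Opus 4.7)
The plan is to combine the interior estimate of \cref{sec:preliminary-results-3} with a boundary-layer control derived from the uniform bounds \eqref{eq:106} and \eqref{eq:111}. The driving observation is that $M^{\varepsilon}(\nabla\varphi^0) \to \nabla\varphi^0$ strongly in $L^p(\Omega,\RR^d)$ by \eqref{eq:21}, so once the boundary strip is absorbed, everything will reduce to this already-known convergence.

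First, I will fix $\delta > 0$ and choose an auxiliary $\eta > 0$ to be adjusted at the end. Since $\nabla\varphi^0 \in L^p(\Omega,\RR^d)$, absolute continuity of the Lebesgue integral yields an open $\Omega' \subset\subset \Omega$ such that both $|\Omega\setminus\Omega'|$ and $\|\nabla\varphi^0\|_{L^p(\Omega\setminus\Omega',\RR^d)}^p$ are smaller than $\eta$. I then split
\begin{equation*}
\norm{\bfp(y,M^{\varepsilon}(\nabla\varphi^0)(x)) - \bfp(y,\nabla\varphi^0(x))}_{L^p(\Omega\times Y,\RR^d)}^p = \mathcal{I}_1^{\varepsilon} + \mathcal{I}_2^{\varepsilon},
\end{equation*}
where $\mathcal{I}_1^{\varepsilon}$ and $\mathcal{I}_2^{\varepsilon}$ are the contributions from $\Omega'\times Y$ and $(\Omega\setminus\Omega')\times Y$ respectively. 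For $\mathcal{I}_1^{\varepsilon}$, \cref{sec:preliminary-results-3} gives, with $\gamma = \tfrac{1}{2p-\alpha}$,
\begin{equation*}
\mathcal{I}_1^{\varepsilon} \le C\left(|\Omega| + \norm{M^{\varepsilon}(\nabla\varphi^0)}_{L^p(\Omega,\RR^d)}^p + \norm{\nabla\varphi^0}_{L^p(\Omega,\RR^d)}^p\right)^{1-\gamma}\norm{M^{\varepsilon}(\nabla\varphi^0)-\nabla\varphi^0}_{L^p(\Omega,\RR^d)}^{p\gamma}.
\end{equation*}
The first factor is uniformly bounded in $\varepsilon$ because $M^{\varepsilon}$ is non-expansive in $L^p$, and the second tends to $0$ by \eqref{eq:21}, so $\mathcal{I}_1^{\varepsilon} \to 0$ for the fixed $\Omega'$.

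For $\mathcal{I}_2^{\varepsilon}$, I apply the triangle inequality and bound $\norm{\bfp(\cdot,M^{\varepsilon}(\nabla\varphi^0))}_{L^p((\Omega\setminus\Omega')\times Y,\RR^d)}^p$ and $\norm{\bfp(\cdot,\nabla\varphi^0)}_{L^p((\Omega\setminus\Omega')\times Y,\RR^d)}^p$ separately by summing \eqref{eq:106} and \eqref{eq:111} over those cells $Y_i^{\varepsilon}$ that intersect $\Omega\setminus\Omega'$. For $\varepsilon$ small enough, the union of such cells lies inside an $O(\varepsilon)$-enlargement $\tilde{\Omega}$ of $\Omega\setminus\Omega'$, so both $|\tilde{\Omega}|$ and $\norm{\nabla\varphi^0}_{L^p(\tilde{\Omega},\RR^d)}^p$ stay bounded by, say, $2\eta$. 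Combined with the cellwise Jensen-type contraction $\norm{M^{\varepsilon}\vv}_{L^p(A)} \le \norm{\vv}_{L^p(A)}$ on any union $A$ of cells, this delivers $\mathcal{I}_2^{\varepsilon} \le C\eta$ uniformly for small $\varepsilon$. Passing to $\limsup$ and then letting $\eta \to 0$ concludes the argument.

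The main technical obstacle is the boundary-layer bookkeeping: $M^{\varepsilon}(\nabla\varphi^0)$ vanishes on cells that are not entirely contained in $\Omega$, and the estimates \eqref{eq:106}, \eqref{eq:111} are stated cell-by-cell, so the strip $\Omega\setminus\Omega'$ has to be thickened by $O(\varepsilon)$ to align with the cubic partition before one can sum. Once this enlargement is handled and the continuity of the Lebesgue integral of $|\nabla\varphi^0|^p$ is invoked, the remainder of the proof is a routine interior-plus-boundary decomposition.
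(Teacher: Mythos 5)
Your proof is correct and follows essentially the same route as the paper's: an interior/boundary-layer splitting in which the interior contribution is controlled by \cref{sec:preliminary-results-3} together with the strong $L^p$ convergence $M^{\varepsilon}(\nabla \varphi^0)\to\nabla\varphi^0$ from \eqref{eq:21}, and the thin remaining region is absorbed via the growth bounds on $\bfp$ and absolute continuity of $\int_{\Omega}\abs{\nabla\varphi^0}^p\di x$. The only (cosmetic) difference is that the paper takes the boundary layer to be the $\varepsilon$-dependent union of cut cells $\bigcup_{i\in J^{\varepsilon}}Z_i^{\varepsilon}$, where $M^{\varepsilon}(\nabla\varphi^0)=0$ so that \eqref{eq:122} and the pointwise bound \eqref{eq:134} apply directly, whereas you fix an $\varepsilon$-independent $\Omega'\subset\subset\Omega$ and thicken the strip by $O(\varepsilon)$, invoking \eqref{eq:106}--\eqref{eq:111} and the $L^p$-contractivity of $M^{\varepsilon}$ there.
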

\begin{proof}
  By monotonicity \ref{cond:a-monotonicity}, \eqref{eq:110}, and
  continuity \ref{cond:a-continuity}, we have for a.e. $x \in \Omega$,
\begin{align*}
  &\lambda_o \int_Y \abs{\bfp \left( y, \nabla \varphi^0(x) \right)}^p
    \di y\\
  &\le \int_Y \left[ \bfa \left( y,\bfp \left( y, \nabla \varphi^0(x)
    \right) \right) - \bfa (y,0) \right] \cdot \bfp \left( y, \nabla
    \varphi^0(x) \right) \di y\\
  &= \int_Y \bfa \left( y, \bfp \left( y, \nabla \varphi^0(x) \right)
    \right) \cdot \nabla \varphi^0 (x) \di y - \int_Y \bfa (y,0)
    \cdot \bfp \left( y, \nabla \varphi^0(x) \right) \di y\\
  &\le \int_Y \left[ \bfa \left( y, \bfp \left( y, \nabla \varphi^0(x) \right)
    \right) - \bfa (y,0)  \right] \cdot \nabla \varphi^0 (x) \di y
  \\
  &\quad{}+{} \int_Y \bfa (y,0) \cdot \nabla \varphi^0 (x) \di y - 
    \int_Y \bfa (y,0) \cdot \bfp \left( y, \nabla \varphi^0(x) \right)
    \di y\\
  &\le \int_Y \Lambda_o \left( 1 + \abs{\bfp \left( y, \nabla
    \varphi^0(x) \right)}^2 \right)^{\frac{p-1-\alpha}{2}} \abs{\bfp
    \left( y, \nabla \varphi^0(x) \right)}^{\alpha}
    \abs{\nabla \varphi^0 (x)} \di y
  \\
  &\quad{}+{} \int_Y \bfa (y,0) \cdot \nabla \varphi^0 (x) \di y - 
    \int_Y \bfa (y,0) \cdot \bfp \left( y, \nabla \varphi^0(x) \right)
    \di y.
\end{align*}
If $\abs{\bfp \left( y, \nabla \varphi^0(x) \right)} < 1$, then 
\begin{align*}
\left( 1 + \abs{\bfp \left( y, \nabla
    \varphi^0(x) \right)}^2 \right)^{\frac{p-1-\alpha}{2}} \abs{\bfp
  \left( y, \nabla \varphi^0(x) \right)}^{\alpha}
  \le C \abs{\bfp
    \left( y, \nabla \varphi^0(x) \right)}^{\alpha} \le C,
\end{align*}
and when $\abs{\bfp \left( y, \nabla \varphi^0(x) \right)}
\ge 1$, 
\begin{align*}
\left( 1 + \abs{\bfp \left( y, \nabla
    \varphi^0(x) \right)}^2 \right)^{\frac{p-1-\alpha}{2}} \abs{\bfp
  \left( y, \nabla \varphi^0(x) \right)}^{\alpha}
  \le C \abs{\bfp
    \left( y, \nabla \varphi^0(x) \right)}^{p-1}.
\end{align*}
Therefore, 
\begin{align*}
  &\lambda_o \int_Y \abs{\bfp \left( y, \nabla \varphi^0(x) \right)}^p
    \di y\\
  &\le C \int_Y  \left( 1 + \abs{\bfp \left( y, \nabla
    \varphi^0(x) \right)}^{{p-1}} \right)
    \abs{\nabla \varphi^0 (x)} \di y
  \\
  &\quad{}+{} \int_Y \bfa (y,0) \cdot \nabla \varphi^0 (x) \di y - 
    \int_Y \bfa (y,0) \cdot \bfp \left( y, \nabla \varphi^0(x) \right)
    \di y,
\end{align*}
and thus by boundedness \ref{cond:a-bound}, 
\begin{align*}
  &\lambda_o \int_Y \abs{\bfp \left( y, \nabla \varphi^0(x) \right)}^p
    \di y
  \\
  &\le C \left( \int_Y   \abs{\bfp \left( y, \nabla
    \varphi^0(x) \right)}^{{p-1}} 
    \abs{\nabla \varphi^0 (x)} \di y
    +  \abs{\nabla \varphi^0(x)} + 
    \int_Y \abs{ \bfp \left( y, \nabla \varphi^0(x) \right)} \di y \right).
\end{align*}
Applying Young inequality for the first and the last integrands, we
obtain for a.e. $x \in \Omega$,
\begin{align}
\label{eq:134}
\int_Y \abs{\bfp \left( y, \nabla \varphi^0(x) \right)}^p
  \di y
  \le
  C_{*} \left( \abs{\nabla \varphi^0(x)}^p + \abs{\nabla \varphi^0(x)} + 1 \right),
\end{align}
for some constant $C_{*} > 0$ independent of $\varepsilon > 0$.

In
  \eqref{eq:13}, consider the special case
  $\xi =0$, then
\begin{align}
\label{eq:123}
-\Div_y \bfa (y, \nabla_{y} \eta _0) = 0, \quad \eta_0 \in W_{\per}^{1,p}(Y)/\RR
\end{align}
  By
  \ref{cond:a-bound}, \ref{cond:a-monotonicity}, \eqref{eq:123}, and H\"{o}lder
  inequality we obtain
\begin{align*}
  \lambda_o \norm{\nabla_y \eta_0}_{L^p}^p
  &\le \int_{Y} \left[  \bfa \left( y, \nabla_y
    \eta_0  \right) - \bfa \left(
    y,0 \right) \right] \cdot \nabla_y
    \eta_0 \di y
    = - \int_{Y}\bfa \left(
    y,0 \right) \cdot \nabla_y
    \eta_0 \di y
    \le
    C\Lambda_{*}
      \norm{\nabla_y \eta_0}_{L^p(Y)}. 
\end{align*}
Therefore,
\begin{align}
\label{eq:122}
 \int_Y \abs{\bfp (y,0)}^p \di y  = \norm{\nabla_y\eta_0}_{L^p(Y,\RR^d)}^p \le C
(d,p,\lambda_o, \Lambda_{*}) \eqqcolon C_{**}.
\end{align}

Fix $\delta > 0$. Since $\varphi^0 \in W^{1,p}(\Omega)$, there exists
$\varepsilon_{\delta} > 0$ such that whenever $\varepsilon \in (0,
\varepsilon_{\delta}),$ we have 
\begin{align}
\label{eq:135}
2^p \left( C_{*} \int_{\bigcup_{i\in J^{\varepsilon}} Z_i^{\varepsilon}} \left(
  \abs{\nabla \varphi^0(x)}^p + \abs{\nabla \varphi^0(x)} + 1 \right)
  \di x
   + C_{**} \abs{\bigcup_{i\in J^{\varepsilon}} Z_i^{\varepsilon}}
   \right)
  \le \delta.
\end{align}

In \cref{sec:preliminary-results-3}, let
$\Omega' = \left( \bigcup_{i \in I^{\varepsilon}} Y_i^{\varepsilon}
\right),$ we have
\begin{align*}
  &\norm{\bfp\left( y,M^{\varepsilon}(\nabla \varphi^0)(x) \right) -
    \bfp \left( y,\nabla \varphi^0(x)
    \right)}_{L^p(\Omega\times Y,\RR^d)}^p
  \\
  &= \norm{\bfp\left( y,M^{\varepsilon}(\nabla \varphi^0)(x) \right) -
    \bfp \left( y,\nabla \varphi^0(x)
    \right)}_{L^p \left(\Omega' \times Y,\RR^d\right)}^p\\
  &\qquad + \int_{\left( \bigcup_{i \in J^{\varepsilon}}
    Z_i^{\varepsilon} \right) \times Y} \abs{\bfp\left( y,M^{\varepsilon}(\nabla \varphi^0)(x) \right) -
    \bfp \left( y,\nabla \varphi^0(x)
    \right)}^p \di x \di y
  \\
  &\le C  \left( \abs{\Omega} +
    \norm{M^{\varepsilon}(\nabla
    \varphi^0)}_{L^p(\Omega,\RR^d)}^p +
    \norm{\nabla \varphi^0}_{L^p(\Omega,\RR^d)}^p
    \right)^{\frac{2p-\alpha-1}{2p-\alpha}}\cdot\norm{M^{\varepsilon}(\nabla \varphi^0)-\nabla
    \varphi^0}_{L^p(\Omega,\RR^d)}^{\frac{p}{2p-\alpha}}\\
  &\qquad + 2^p \int_{\left( \bigcup_{i \in J^{\varepsilon}}
    Z_i^{\varepsilon} \right) \times Y} \abs{\bfp\left(
    y,0) \right)}^p + \abs{
    \bfp \left( y,\nabla \varphi^0(x)
    \right)}^p \di x \di y
  \\
  &\le C  \left( \abs{\Omega} +
    \norm{M^{\varepsilon}(\nabla
    \varphi^0)}_{L^p(\Omega,\RR^d)}^p +
    \norm{\nabla \varphi^0}_{L^p(\Omega,\RR^d)}^p
    \right)^{\frac{2p-\alpha-1}{2p-\alpha}}\cdot\norm{M^{\varepsilon}(\nabla \varphi^0)-\nabla
    \varphi^0}_{L^p(\Omega,\RR^d)}^{\frac{p}{2p-\alpha}}\\
  &\qquad + 2^p \int_{\bigcup_{i \in J^{\varepsilon}}
    Z_i^{\varepsilon}} \left(  \int_Y \abs{\bfp\left(
    y,0) \right)}^p \di y + \int_Y \abs{
    \bfp \left( y,\nabla \varphi^0(x)
    \right)}^p  \di y  \right)  \di x 
  \\
  &\le C  \left( \abs{\Omega} +
    \norm{M^{\varepsilon}(\nabla
    \varphi^0)}_{L^p(\Omega,\RR^d)}^p +
    \norm{\nabla \varphi^0}_{L^p(\Omega,\RR^d)}^p
    \right)^{\frac{2p-\alpha-1}{2p-\alpha}}\cdot\norm{M^{\varepsilon}(\nabla \varphi^0)-\nabla
    \varphi^0}_{L^p(\Omega,\RR^d)}^{\frac{p}{2p-\alpha}} + \delta,
\end{align*}
where we have used Fubini theorem, \eqref{eq:134}, \eqref{eq:122}, and
\eqref{eq:135} in the last estimate. Letting $\varepsilon \cv 0$, we
obtain
\begin{align*}
  \lim_{\varepsilon \to 0}
  \norm{\bfp\left( y,M^{\varepsilon}(\nabla \varphi^0)(x) \right) -
    \bfp \left( y,\nabla \varphi^0(x)
  \right)}_{L^p(\Omega\times Y,\RR^d)}^p
  \le \delta,
\end{align*}
since we have $M^{\varepsilon}(\nabla \varphi^0) \cv \nabla \varphi^0$
in $L^p$-norm, see \eqref{eq:21}. Because $\delta > 0$ is arbitrary,
we conclude that
\begin{align*}
  \lim_{\varepsilon \to 0}
  \norm{\bfp\left( y,M^{\varepsilon}(\nabla \varphi^0)(x) \right) -
    \bfp \left( y,\nabla \varphi^0(x)
  \right)}_{L^p(\Omega\times Y,\RR^d)}^p
  =0.
\end{align*}
\end{proof}

\begin{proposition}
  \label{sec:preliminary-results-4}
  We have
\begin{align}
  \label{eq:119}
  \lim_{\varepsilon\to 0} \left( 
\norm{\bfp \left(\frac{x}{\varepsilon},
  M^{\varepsilon}(\nabla \varphi^0)(x)\right)}_{L^p(\Omega,\RR^d)}^p
  - \norm{\bfp \left(y,
  M^{\varepsilon}(\nabla \varphi^0)(x)\right)}_{L^p(\Omega \times
  Y,\RR^d)}^p  \right) = 0.
\end{align}
\end{proposition}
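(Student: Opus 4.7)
The plan is to exploit two structural features of the objects at play. First, $M^\varepsilon(\nabla\varphi^0)$ is piecewise constant: it equals a constant vector $\xi^i \coloneqq \frac{1}{|Y_i^\varepsilon|}\int_{Y_i^\varepsilon}\nabla\varphi^0\,dz$ on each interior cell $Y_i^\varepsilon$ with $i\in I^\varepsilon$, and vanishes identically on the boundary strip $\bigcup_{i\in J^\varepsilon}Z_i^\varepsilon$ by the very definition \eqref{eq:8}. Second, for every fixed $\xi\in\RR^d$, the map $y\mapsto \bfp(y,\xi) = \xi + \nabla_y\eta_\xi(y)$ is $Y$-periodic, because $\eta_\xi \in W^{1,p}_{\per}(Y)/\RR$ solves \eqref{eq:13} and $\bfa(\cdot,\xi)$ is $Y$-periodic by \ref{cond:a-periodic}. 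These two observations together will collapse the interior contributions to the two $L^p$-norms onto each other, leaving only a boundary remainder to estimate.

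Concretely, I would split both norms according to $\Omega = \bigl(\bigcup_{i\in I^\varepsilon}Y_i^\varepsilon\bigr)\cup \bigl(\bigcup_{i\in J^\varepsilon}Z_i^\varepsilon\bigr)$. On each interior cell $Y_i^\varepsilon$, constancy of $\xi^i$ in $x$ together with the change of variables $x = \varepsilon(y+i)$ and the $Y$-periodicity of $\bfp(\cdot,\xi^i)$ yield
\begin{align*}
\int_{Y_i^\varepsilon}\Bigl|\bfp\Bigl(\tfrac{x}{\varepsilon},\xi^i\Bigr)\Bigr|^p\,dx
 = \varepsilon^d\int_Y|\bfp(y,\xi^i)|^p\,dy
 = \int_{Y_i^\varepsilon\times Y}\bigl|\bfp(y,M^\varepsilon(\nabla\varphi^0)(x))\bigr|^p\,dx\,dy.
\end{align*}
Summing over $i\in I^\varepsilon$, the interior pieces of the two $L^p$-norms cancel exactly.

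It then remains to show that the boundary contributions vanish as $\varepsilon\to 0$. On $\bigcup_{i\in J^\varepsilon}Z_i^\varepsilon$ one has $M^\varepsilon(\nabla\varphi^0)\equiv 0$, so both integrands reduce to $|\bfp(\cdot,0)|^p$; invoking the uniform bound \eqref{eq:122}, namely $\int_Y|\bfp(y,0)|^p\,dy\le C_{**}$, and using the $Y$-periodicity of $\bfp(\cdot,0)$ once more (together with $Z_i^\varepsilon \subset Y_i^\varepsilon$), I would obtain
\begin{align*}
\int_{\bigcup_{i\in J^\varepsilon}Z_i^\varepsilon}\Bigl|\bfp\Bigl(\tfrac{x}{\varepsilon},0\Bigr)\Bigr|^p\,dx
 + \int_{\bigcup_{i\in J^\varepsilon}Z_i^\varepsilon\times Y}|\bfp(y,0)|^p\,dx\,dy
 \le 2C_{**}\Bigl|\bigcup_{i\in J^\varepsilon}Y_i^\varepsilon\Bigr|.
\end{align*}
Because $\Omega$ is a bounded $C^{1,1}$ domain, the boundary strip has measure of order $\varepsilon$ and so the right-hand side tends to $0$, which closes the argument.

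The proof is essentially bookkeeping: neither the monotonicity \ref{cond:a-monotonicity} nor the continuity \ref{cond:a-continuity} beyond what was already used to construct $\bfp$ enters here. The only (mild) technical point is to make sure that every cell contributing to the interior sum is fully contained in $\Omega$ so that the change-of-variables identity applies without truncation, which is exactly what the partition $I^\varepsilon\cup J^\varepsilon$ from \eqref{eq:116} is designed to guarantee.
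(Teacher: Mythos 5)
Your proof is correct and follows essentially the same route as the paper: split $\Omega$ into the interior cells $Y_i^\varepsilon$, $i\in I^\varepsilon$, where constancy of $M^\varepsilon(\nabla\varphi^0)$ plus $Y$-periodicity of $\bfp(\cdot,\xi)$ make the two contributions coincide after the change of variables, and bound the boundary-strip remainder (where $M^\varepsilon(\nabla\varphi^0)=0$) by $C\varepsilon^d\,\#J^\varepsilon\le C\varepsilon$ using \eqref{eq:122}. No gaps.
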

\begin{proof}
  For $\varepsilon > 0$, let $y = \frac{x}{\varepsilon}-i$ and
  $Z_i = \varepsilon^{-1}(Z_i^{\varepsilon}-i).$ Note that $\bfp$ is
  periodic with respect to the $y$ variable and $Z_i^{\varepsilon} =
  Y_i^{\varepsilon}$, $Z_i=Y$ whenever $i \in I^{\varepsilon}$. On $Y_i^{\varepsilon} \times Y$, note that
$M^{\varepsilon}(\nabla \varphi^0)(x)$ is independent of
  $x \in Y_i^{\varepsilon}$ and $y \in Y$ by definition. Thus, on $Y_i^{\varepsilon}$,  we let
  $\xi^i \coloneqq M^{\varepsilon}(\nabla \varphi^0(\cdot)) \in
  \RR^d$.
  We have
\begin{align*}
\int_{\Omega} \abs{\bfp \left( \frac{x}{\varepsilon}, M^{\varepsilon}
  \left( \nabla \varphi^0 \right)(x) \right)}^p \di x
  &= \sum_{i \in I^{\varepsilon} \cup J^{\varepsilon}}
    \int_{Z_i^{\varepsilon}} \abs{\bfp \left( \frac{x}{\varepsilon},
    M^{\varepsilon} \left( \nabla \varphi^0 \right)(x) \right)}^p\di x
  \\
  &=  \sum_{i \in I^{\varepsilon}}
    \int_{Z_i^{\varepsilon}} \abs{\bfp \left( \frac{x}{\varepsilon},
    \xi^i \right)}^p \di x
   +  \sum_{i \in  J^{\varepsilon}}
    \int_{Z_i^{\varepsilon}} \abs{\bfp \left( \frac{x}{\varepsilon},
    0 \right)}^p \di x
    \\
  &=  \sum_{i \in I^{\varepsilon}}
    \int_{Y_i^{\varepsilon}} \abs{\bfp \left( \frac{x}{\varepsilon},
    \xi^i \right)}^p \di x
   +  \sum_{i \in  J^{\varepsilon}}
    \int_{Z_i^{\varepsilon}} \abs{\bfp \left( \frac{x}{\varepsilon},
    0 \right)}^p \di x\\
  &=  \sum_{i \in I^{\varepsilon} } \int_{Y}
    \abs{\bfp (y,\xi^i)} ^p \di y~ \varepsilon^d
  +  \sum_{i \in  J^{\varepsilon}}
    \int_{Z_i} \abs{\bfp \left( y,
    0 \right)}^p \di y~ \varepsilon^d \\
  &=  \sum_{i \in I^{\varepsilon} }
    \int_{Y_i^{\varepsilon}\times Y} \abs{\bfp \left( y, \xi^i
    \right)}^p \di x \di y
  +  \sum_{i \in  J^{\varepsilon}}
    \int_{Z_i} \abs{\bfp \left( y,
    0 \right)}^p \di y~ \varepsilon^d\\
  &=  \sum_{i \in I^{\varepsilon}}
    \int_{Y_i^{\varepsilon}\times Y} \abs{\bfp(y,
    M^{\varepsilon}(\nabla \varphi^0)(x))}^p \di x\di y
  + \sum_{i \in J^{\varepsilon}}
    \int_{Z_i^{\varepsilon}\times Y} \abs{\bfp(y,
    0)}^p \di x\di y \\
  &\qquad +  \sum_{i \in  J^{\varepsilon}}
    \int_{Z_i} \abs{\bfp \left( y,
    0 \right)}^p \di y~ \varepsilon^d -  \sum_{i \in J^{\varepsilon}}
    \int_{Z_i^{\varepsilon}\times Y} \abs{\bfp(y,
    0)}^p \di x\di y\\
  &= \int_{\Omega \times Y} \abs{\bfp(y,
    M^{\varepsilon}(\nabla \varphi^0)(x))}^p \di x\di y\\
  &\qquad +  \sum_{i \in  J^{\varepsilon}}
    \int_{Z_i} \abs{\bfp \left( y,
    0 \right)}^p \di y~ \varepsilon^d -  \sum_{i \in J^{\varepsilon}}
    \int_{Z_i^{\varepsilon}\times Y} \abs{\bfp(y,
    0)}^p \di x\di y.
\end{align*}
Therefore,
\begin{align*}
  &\abs{\norm{\bfp \left(\frac{x}{\varepsilon},
    M^{\varepsilon}(\nabla \varphi^0)(x)\right)}_{L^p(\Omega,\RR^d)}^p
    - \norm{\bfp \left(y,
    M^{\varepsilon}(\nabla \varphi^0)(x)\right)}_{L^p(\Omega \times
    Y,\RR^d)}^p}\\
  &\le \abs{\sum_{i \in  J^{\varepsilon}}
    \int_{Z_i} \abs{\bfp \left( y,
    0 \right)}^p \di y~ \varepsilon^d -  \sum_{i \in J^{\varepsilon}}
    \int_{Z_i^{\varepsilon}\times Y} \abs{\bfp(y,
    0)}^p \di x\di y}\\
  &\le \sum_{i \in  J^{\varepsilon}}
    \int_{Y} \abs{\bfp \left( y,
    0 \right)}^p \di y~ \varepsilon^d +  \sum_{i \in J^{\varepsilon}}
    \int_{Y} \abs{\bfp(y,
    0)}^p \di y~\varepsilon^d \abs{Z_i}\\
  &\le 2 \sum_{i \in  J^{\varepsilon}}
    \int_{Y} \abs{\bfp \left( y,
    0 \right)}^p \di y~ \varepsilon^d\\
  &\le 2 \varepsilon^d\sum_{i \in J^{\varepsilon}}^{} C \qquad \text{(by
    \eqref{eq:122})}\\
  &\le 2C \varepsilon^d \cdot \frac{\abs{\partial
    \Omega}}{\varepsilon^{d-1}}\\
  &\le C \varepsilon \quad\cv 0 \text{ as } \varepsilon \to 0.
\end{align*}


\end{proof}

\begin{proposition}[Characterization of strong two-scale convergence]
\label{sec:preliminary-results-1}
Let $1 < p < \infty$. Define the \emph{two-scale composition function} by 
\begin{align*}
  S^{\varepsilon} (x,y)
  \coloneqq \varepsilon \left[ \frac{x}{\varepsilon} \right] +
  \varepsilon y, \qquad (x, y) \in \Omega \times Y.
\end{align*}
A sequence $\left\{ v^{\varepsilon} \right\}_{\varepsilon>0} \subset L^p(\Omega)$
strongly two-scale converges to $v \in L^p(\Omega \times Y)$ if and
only if 
\begin{align}
\label{eq:75}
v^{\varepsilon} \circ S^{\varepsilon} \cv v \quad \text{ in
  }L^p(\Omega\times Y).
\end{align} 
\end{proposition}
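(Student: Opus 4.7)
My plan is to reduce the statement to \cref{sec:two-scale-conv-5}, which characterizes strong two-scale convergence as $\|v^\varepsilon - (\MM^\varepsilon v)(\cdot, \cdot/\varepsilon)\|_{L^p(\Omega)} \to 0$. I will show this is equivalent to $\|v^\varepsilon \circ S^\varepsilon - v\|_{L^p(\Omega \times Y)} \to 0$ via a cell-by-cell change of variables identifying $v^\varepsilon \circ S^\varepsilon$ as the periodic unfolding of $v^\varepsilon$.

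The key observation is that on any full cell $Y_i^\varepsilon$ with $i \in I^\varepsilon$, the map $x \mapsto S^\varepsilon(x, y) = \varepsilon i + \varepsilon y$ is constant in $x$, and moreover $(\MM^\varepsilon v)(\cdot, \cdot/\varepsilon) \circ S^\varepsilon(x, y) = (\MM^\varepsilon v)(x, y)$. The latter identity follows from the $Y$-periodicity of $v$ in its second slot: setting $z = \varepsilon i + \varepsilon y$ gives $[z/\varepsilon]_Y = i$ and $\{z/\varepsilon\}_Y = y$, so
\begin{align*}
(\MM^\varepsilon v)(z, z/\varepsilon) = \int_Y v(\varepsilon i + \varepsilon \xi, y)\,d\xi = (\MM^\varepsilon v)(x, y).
\end{align*}
A direct change of variables $z = \varepsilon i + \varepsilon y$ cell by cell then yields the basic identity
\begin{align*}
\norm{(v^\varepsilon - (\MM^\varepsilon v)(\cdot, \cdot/\varepsilon)) \circ S^\varepsilon}_{L^p(\bigcup_{i \in I^\varepsilon} Y_i^\varepsilon \times Y)} = \norm{v^\varepsilon - (\MM^\varepsilon v)(\cdot, \cdot/\varepsilon)}_{L^p(\bigcup_{i \in I^\varepsilon} Y_i^\varepsilon)}.
\end{align*}

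Combined with the standard approximation $\norm{\MM^\varepsilon v - v}_{L^p(\Omega \times Y)} \to 0$, the triangle inequality applied in both directions gives
\begin{align*}
\norm{v^\varepsilon \circ S^\varepsilon - v}_{L^p(\Omega \times Y)} = \norm{v^\varepsilon - (\MM^\varepsilon v)(\cdot, \cdot/\varepsilon)}_{L^p(\Omega)} + o(1) + \mathcal{O}(\text{boundary})
\end{align*}
as $\varepsilon \to 0$. Invoking \cref{sec:two-scale-conv-5} then yields both implications simultaneously.

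The main technical obstacle is controlling the boundary contribution from cells $Z_j^\varepsilon$ with $j \in J^\varepsilon$. Although $\abs{\bigcup_j Z_j^\varepsilon} = \mathcal{O}(\varepsilon)$, $L^p$-boundedness alone does not force $\int_{\bigcup_j Z_j^\varepsilon} \abs{v^\varepsilon}^p \to 0$. However, under the strong two-scale hypothesis, \cref{sec:two-scale-conv-5} gives $v^\varepsilon \approx (\MM^\varepsilon v)(\cdot, \cdot/\varepsilon)$ in $L^p(\Omega)$, and $(\MM^\varepsilon v)(\cdot, \cdot/\varepsilon)$ is controlled by $v \in L^p(\Omega \times Y)$ via Jensen's inequality, so absolute continuity of the Lebesgue integral provides the needed vanishing on the thin boundary layer. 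A symmetric argument handles the converse direction starting from strong unfolded $L^p$ convergence, and the same estimates accommodate whatever convention (e.g.\ zero extension of $v^\varepsilon$ off $\Omega$) is used to define $v^\varepsilon \circ S^\varepsilon$ on boundary cells.
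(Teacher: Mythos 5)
Your plan has two genuine problems, one structural and one technical. The structural one: reducing the statement to \cref{sec:two-scale-conv-5} is circular in substance. That lemma is imported from \cite{visintinTwoscaleCalculus2006}, where ``strong two-scale convergence'' is \emph{defined} by \eqref{eq:75}; the $\MM^{\varepsilon}$-criterion proved there characterizes the unfolded notion, not the norm-convergence notion of \cref{def:two-scale}. The equivalence of these two notions is precisely what \cref{sec:preliminary-results-1} asserts (see the remark immediately following it), so invoking \cref{sec:two-scale-conv-5} as a characterization of \cref{def:two-scale} presupposes the statement you are proving, or at best buries the entire difficulty inside that lemma. A telling symptom is that none of your steps uses $1<p<\infty$ (the cell-by-cell change of variables, $\MM^{\varepsilon}v\to v$ in $L^p$, and Jensen plus absolute continuity all work for $p=1$), yet the necessary direction of the proposition fails for $p=1$. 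The paper's proof is where this hypothesis enters: it establishes weak convergence $v^{\varepsilon}\circ S^{\varepsilon}\rightharpoonup v$ in $L^p(\Omega\times Y)$, the norm identity $\norm{v^{\varepsilon}\circ S^{\varepsilon}}_{L^p(\Omega\times Y)}=\norm{v^{\varepsilon}}_{L^p(\Omega)}$, and then concludes by the Radon--Riesz property of the uniformly convex space $L^p$, $1<p<\infty$. Your identities ($(\MM^{\varepsilon}v)(\cdot,\cdot/\varepsilon)\circ S^{\varepsilon}=\MM^{\varepsilon}v$ on full cells, and the resulting cell-wise isometry) are correct and useful for passing between the $\MM^{\varepsilon}$-criterion and the unfolded formulation, but they cannot replace the uniform-convexity step.

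The technical gap is in the claimed ``symmetric argument'' for the converse. Your Jensen/absolute-continuity estimate does show $\int_{\cup_{j}Z_j^{\varepsilon}}\abs{(\MM^{\varepsilon}v)(x,x/\varepsilon)}^p\di x\to 0$ and $\int_{(\cup_j Z_j^{\varepsilon})\times Y}\abs{v}^p\to 0$, and it closes the boundary layer in the direction where you already know $\norm{v^{\varepsilon}-(\MM^{\varepsilon}v)(\cdot,\cdot/\varepsilon)}_{L^p(\Omega)}\to 0$. In the converse direction, however, the hypothesis $v^{\varepsilon}\circ S^{\varepsilon}\to v$ in $L^p(\Omega\times Y)$ sees each boundary cell only with the weight $\abs{Z_j^{\varepsilon}}/\varepsilon^d$: with zero extension, $\int_{Z_j^{\varepsilon}\times Y}\abs{v^{\varepsilon}\circ S^{\varepsilon}}^p\di x\di y=\left(\abs{Z_j^{\varepsilon}}/\varepsilon^d\right)\int_{Z_j^{\varepsilon}}\abs{v^{\varepsilon}}^p\di x$, and this weight can be arbitrarily small. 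Hence the hypothesis does not yield $\int_{\cup_j Z_j^{\varepsilon}}\abs{v^{\varepsilon}}^p\to 0$, which your argument needs; a sequence concentrated on thin slivers of boundary cells has unfoldings tending to $0$ in $L^p(\Omega\times Y)$ while $\norm{v^{\varepsilon}}_{L^p(\Omega)}=1$. Closing this requires fixing the convention as in Visintin---unfold the zero extension and measure it over all cells meeting $\Omega$ (equivalently over $\RR^d\times Y$), where the change of variables is an exact isometry cell by cell---and arguing from that; it is not obtained by ``the same estimates'' as in the forward direction.
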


\begin{proof}
  The sufficient condition
  is proved in \cite[Proposition
  2.7]{visintinTwoscaleCalculus2006}. 

To prove the necessary condition, we assume that $v^{\varepsilon}$ strongly two-scale converges to
  $v$, i.e., $v^{\varepsilon} \wcv[2] v$ in $L^p(\Omega \times Y)$ and
  $\norm{v^{\varepsilon}}_{L^p(\Omega)} \cv \norm{v}_{L^p(\Omega
    \times Y)}.$ On the one hand, \cite[Proposition
  2.5]{visintinTwoscaleCalculus2006} implies that
  $v^{\varepsilon} \circ S^{\varepsilon} \wcv v$ in
  $L^p(\Omega \times Y)$. On the other hand, \cite[Lemma
  1.1]{visintinTwoscaleCalculus2006} implies that
  $\norm{v^{\varepsilon} \circ S^{\varepsilon}}_{L^p(\Omega \times Y)}
  = \norm{v^{\varepsilon}}_{L^p(\Omega)}$, hence $\norm{v^{\varepsilon}
    \circ S^{\varepsilon}}_{L^p(\Omega \times Y)} \cv
  \norm{v}_{L^p(\Omega \times Y)}$. By the result in real analysis
  \cite[Page 124]{brezisFunctionalAnalysisSobolev2011}, we conclude 
\begin{align*}
v^{\varepsilon} \circ S^{\varepsilon} \cv v \qquad \text{ in }
  L^p(\Omega \times Y).
\end{align*}
\end{proof}

There are two different definitions for strong two-scale convergence:
we use the one in \cref{def:two-scale}, while author of
\cite{visintinTwoscaleCalculus2006} uses \eqref{eq:75} as the
definition of strong two-scale
convergence. \cref{sec:preliminary-results-1} shows that the two
definitions are equivalent when $p > 1$. The necessary direction does
not hold when $p = 1.$

\subsection{Proof of \eqref{eq:112}, \eqref{eq:44}, and \eqref{eq:23}}
\label{sec:proof-eqrefeq:5}

From \cref{sec:preliminary-results}, \cref{sec:preliminary-results-5},
and \cref{sec:preliminary-results-4}, we conclude that 
\begin{align}
\label{eq:120}
\lim_{\varepsilon \to 0} \norm{\nabla
  \varphi^{\varepsilon}}_{L^p(\Omega,\RR^d)}
  = \norm{\bfp \left(y,
  \nabla \varphi^0(x)\right)}_{L^p(\Omega \times
  Y,\RR^d)}
  = \norm{\nabla \varphi^0(x) + \nabla_y \varphi^1(x,y)}_{L^p(\Omega \times
  Y,\RR^d)}.
\end{align}
Using this with \eqref{eq:4}, we conclude that
\begin{align*}
\nabla \varphi^{\varepsilon}(x) \cv[2] \nabla \varphi^0(x) + \nabla_y
  \varphi^1(x,y) \qquad \text{ in }
L^p(\Omega \times Y).
\end{align*}
The above strong two-scale convergence, however, does not immediately imply
\eqref{eq:112} because \eqref{eq:120} does not guarantee 
\begin{align*}
\lim_{\varepsilon \to 0} \norm{\frac{\partial
  \varphi^{\varepsilon}}{\partial x_i}}_{L^p(\Omega)}
  =  \norm{\frac{\partial \varphi^0}{\partial x_i} + \frac{\partial
  \varphi^1}{\partial y_i}}_{L^p(\Omega \times Y)}.
      \end{align*}
By \cref{sec:preliminary-results-1}, we have 
$(\nabla \varphi^{\varepsilon}) \circ S^{\varepsilon} \cv \nabla
\varphi^0 + \nabla_y \varphi^1$ in $L^p(\Omega \times Y, \RR^d)$, so
$\frac{\partial \varphi^{\varepsilon}}{ \partial x_i} \circ
S^{\varepsilon} \cv \frac{\partial \varphi^0}{\partial x_i} +
\frac{\partial\varphi^1}{\partial y_i}$ in $L^p(\Omega \times Y)$, and
thus, $\frac{\partial \varphi^{\varepsilon}}{ \partial x_i} \cv[2] \frac{\partial \varphi^0}{\partial x_i} +
\frac{\partial\varphi^1}{\partial y_i}$ in $L^p(\Omega \times Y)$,
which is \eqref{eq:112}. Identities  \eqref{eq:44} and \eqref{eq:23} now follows from \cref{sec:two-scale-conv-5}.

\hfill$\Box$

\section{Proof of \cref{thm:main}}
\label{sec:proof-crefthm:main}

The weak convergence of $\varphi^{\varepsilon} $ to $\varphi^{0}$ in $W_0^{1,p}(\Omega)$ follows from \eqref{eq:32}, together with the homogenized equation \eqref{eq:31} for the electrostatic problem, and results obtained in previous sections. Therefore, at this point, it only remains to deal with the homogenization of
the elastic equation \eqref{eq:r7}.  The roadmap to obtain this homogenization result is as follows. In \cref{sec:bound-elast-displ}, we demonstrate that the sequence of
solutions $\{ \uu^{\varepsilon} \}_{\varepsilon >0}$ is uniformly
bounded in $\mathrm{BV}(\Omega',\RR^d)$, with $\Omega'\subset\subset\Omega$,
which will be used later for the two-scale convergence
argument. Then, we discuss additional regularity for the solution
$\uu^{\varepsilon} \in W_{\loc}^{1,1}(\Omega,\RR^d)$ of
\eqref{eq:p410}-\eqref{eq:p415} and derive an a priori estimate for $\uu^{\varepsilon}$. After
that, 
 we discuss existence of such a solution
$\uu^{\varepsilon}$. 
And
finally, in \cref{sec:two-scale-homog}, we adapt two-scale convergence
to derive the homogenization system for
$ \uu^{\varepsilon}$. The solution of the obtained two-scale homogenized problem exhibits additional
regularity of $W^{1,1},$
which allows one to explicitly write the cell and effective systems.


\subsection{An a priori estimate}
\label{sec:bound-elast-displ}

We now prove the boundedness of the sequence of elastic displacements
$\uu^{\varepsilon}$, which is the solution of
\eqref{eq:p410}-\eqref{eq:p415}, in the $\mathrm{BV}-$norm. Suppose $\Omega'
\subset\subset \Omega$ with $C^{1,\kappa}-$boundary, for some $0<\kappa<1$. 
Consider the following auxiliary problem:\\ For given functions
$\bg' \in L^2(\Omega, \RR^d)$ and
$\ff'\in L^2(\Omega, \RR^{d \times d})$, let
$\ww^{\varepsilon} \in W_0^{1,2}(\Omega,\RR^d)$ be the solution of
\begin{align}
\label{eq:89}
\int_{\Omega} \bfB \left( \frac{x}{\varepsilon} \right) \, \DD(\ww^{\varepsilon}) : \DD(\uu)
  \di x 
  = \int_{\Omega} \bg' \cdot \uu \di x
  - \int_{\Omega} \ff' : \DD(\uu) \di x
\end{align}
for all $\uu \in \calD (\Omega,\RR^d)$. This solution
$\ww^{\varepsilon}\in W^{1,2}_0(\Omega,\RR^d)$ exists and is unique
by the Lax-Milgram theorem 
and
\begin{align}
\label{eq:29}
  \norm{\DD \ww^{\varepsilon}}_{L^2(\Omega,\RR^{d\times d})}
  \le C_2 \left( \norm{\bg'}_{L^2 (\Omega,\RR^{d})} +
  \norm{\ff'}_{L^2 (\Omega,\RR^{d\times d})} \right), 
\end{align}
for some constant $C_2 > 0$ independent of $\varepsilon > 0$. Then,
Korn's inequality implies 
\begin{align}
\label{eq:91}
  \norm{\ww^{\varepsilon}}_{W^{1,2}(\Omega,\RR^{d})}
  \le C_2 \left( \norm{\bg'}_{L^2 (\Omega,\RR^{d})} +
  \norm{\ff'}_{L^2 (\Omega,\RR^{d\times d})} \right).
\end{align}

We now consider the problem \eqref{eq:89} with
$\bg' \in L^\infty(\Omega, \RR^d)$ and
$\ff' \in L^\infty(\Omega,\RR^{d \times d})$, then by  \cite[Theorem
1.1 and 1.9]{liEstimatesEllipticSystems2003} (see also
\cite{avellanedaCompactnessMethodsTheory1989,avellanedaCompactnessMethodsTheory1987,shenPeriodicHomogenizationElliptic2018,dangGlobalGradientEstimate2022,prangeUniformEstimatesHomogenization2014}), $\DD\ww^{\varepsilon}$ is
piecewise continuous, and the following Lipschitz estimate holds:
\begin{align}
  \label{eq:7}
  \begin{split}
    \norm{\ww^{\varepsilon}}_{W^{1,\infty}(\Omega',\RR^d)}
    &\le C_1 \left( \norm{\ww^{\varepsilon}}_{L^2(\Omega,\RR^d)} + \norm{\bg'}_{L^{\infty}(\Omega,\RR^{d})} +
      \norm{\ff'}_{L^{\infty}(\Omega,\RR^{d\times d})} \right)\\
    &\le C_1 \left( \norm{\bg'}_{L^{\infty}(\Omega,\RR^{d})} +
      \norm{\ff'}_{L^{\infty}(\Omega,\RR^{d\times d})} \right),
  \end{split}
\end{align}
for some $C_1 = C_1(d,\kappa,\lambda_e,\Lambda_e, \Omega', \Omega)> 0$
independent of $\varepsilon > 0$. In the last estimate, we also use
\eqref{eq:91} and the fact that $\Omega$ is bounded.

By \eqref{eq:32} and an extension of H\"{o}lder inequality \cite[Remark
2, p. 93]{brezisFunctionalAnalysisSobolev2011}, we obtain 
\begin{align}
  \label{eq:92}  \norm{\maxss^{\varepsilon}}_{L^{p/2}} = \norm{\nabla
  \varphi^{\varepsilon}\otimes \nabla \varphi^{\varepsilon}}_{L^{p/2}}
  &\le \norm{\nabla \varphi^{\varepsilon}}_{L^p} \norm{\nabla
    \varphi^{\varepsilon}}_{L^p} \le C \norm{f}_{L^{p'}}^2.
\end{align}

Consider the problem \eqref{eq:89} with
$\bg' \in \calD(\Omega',\RR^d), \ff' \in \calD(\Omega',\RR^{d \times
  d})$, then $\ww^{\varepsilon} = 0$ on $\Omega \setminus \Omega'$.
From \eqref{eq:r7}, \eqref{eq:89}, \eqref{eq:7}, and \eqref{eq:92}, we
have
\begin{align}
\label{eq:90}
  \begin{split}
    &\abs{\int_{\Omega'} \bg' \cdot \uu^{\varepsilon} \di x
    - \int_{\Omega'} \ff' : \DD(\uu^{\varepsilon}) \di x}\\
    &=\abs{\int_{\Omega} \bg' \cdot \uu^{\varepsilon} \di x
  - \int_{\Omega} \ff' : \DD(\uu^{\varepsilon}) \di x}\\
    &= \abs{\int_{\Omega} \bfB \left( \frac{x}{\varepsilon} \right)\DD(\ww^{\varepsilon}) : \DD(\uu^{\varepsilon})
      \di x} \\
    &=\abs{\int_{\Omega} \bg \cdot \ww^{\varepsilon} \di x
  - \int_{\Omega} \bfC \left( \frac{x}{\varepsilon} \right)\maxss^{\varepsilon}: \DD(\ww^{\varepsilon})\di x} 
  \\
    &= \abs{\int_{\Omega'} \bg \cdot \ww^{\varepsilon} \di x
  - \int_{\Omega'} \bfC \left( \frac{x}{\varepsilon} \right)\maxss^{\varepsilon}: \DD(\ww^{\varepsilon})\di x}
  \\
    &\le C \left( \norm{\bg'}_{L^{\infty}(\Omega',\RR^d)} +
      \norm{\ff'}_{L^{\infty}(\Omega',\RR^{d\times d})}\right) \left(
      \norm{\bg}_{L^1(\Omega',\RR^d)} +
      \norm{\maxss^{\varepsilon}}_{L^1(\Omega',\RR^{d\times
      d})}\right)\\
      &\le C \left( \norm{\bg'}_{L^{\infty}(\Omega',\RR^d)} +
      \norm{\ff'}_{L^{\infty}(\Omega',\RR^{d\times d})}\right) \left(
        \norm{\bg}_{L^1(\Omega,\RR^d)} +
        \norm{f}_{L^{p'}(\Omega,\RR^d)}^2\right).
  \end{split}
\end{align}

It follows that $\uu^{\varepsilon} \in \mathrm{BV}_{\loc}(\Omega,\RR^d)$, see
\cite{attouchVariationalAnalysisSobolev2014}. Moreover, by
  choosing $\bg' = 0$ and $\ff = 0$ alternatively, then applying Riesz
theorem, we conclude
\begin{align}
\label{eq:10}
\norm{\uu^{\varepsilon}}_{\mathrm{BV}(\Omega',\RR^d)}
  &\le C (d,\kappa,\lambda_e,\Lambda_e,\lambda_o,\Lambda_o,\Omega',\Omega) \left( \norm{\bg}_{L^r(\Omega,\RR^d)}
    +  \norm{f}_{L^{p'}(\Omega,\RR^{d \times d})}^2\right).
\end{align}

\subsection{Higher regularity and existence}
\label{sec:normal-trace-elastic}
The a priori estimate \eqref{eq:10} shows that the solution
$\uu^{\varepsilon}$ of \eqref{eq:p410}-\eqref{eq:p415}, if it exists, is
merely a bounded variation function. However, this result does not
guarantee that the normal traces appearing in \eqref{eq:p413} are
well-defined, cf. e.g.,
\cite{chenTracesExtensionsBounded2020,phucCharacterizationsSignedMeasures2017,chenGaussGreenTheoremWeakly2009}. Therefore,
we will show next that $\uu^{\varepsilon}$ possesses higher
regularity. Indeed, for each $\varepsilon > 0$, the (unique) solution
$\uu^{\varepsilon}$ of \eqref{eq:p410}-\eqref{eq:p415} belongs to
$W_{\loc}^{1,q(\varepsilon)}(\Omega,\RR^d)$, for some
$1 < q(\varepsilon) < 2$ that will be given below.

To that end, we need to adapt a useful global gradient estimate,
obtained by N. C. Phuc \cite[Theorem
1.1]{phucGlobalIntegralGradient2014} to our current setting, as
follows:
\begin{proposition}
\label{sec:main-results-4}
Suppose $2-\frac{1}{d}<p \le d$ and $\bfa\colon Y \times \RR^d \to \RR^d$ satisfies
\ref{cond:a-periodic}--\ref{cond:a-monotonicity}. Let
$\mu \in L^s(\Omega)$ for some
$d > s >  \frac{1}{1 - \frac{1}{p}+\frac{1}{d}}.$ Then there
exists $\delta = \delta (d, p, \lambda_o,\Lambda_o) > 0$ such that for
any weak solution $\phi \in W_0^{1,p}(\Omega)$ of 
\begin{align}
  \label{eq:56}
  -\Div \left( \bfa (x, \nabla \phi) \right) = \mu \text{ in } \Omega,
  \qquad
  \phi = 0 \text{ on }\partial \Omega,
\end{align}
the following estimate holds:
\begin{align}
\label{eq:55}
  \int_{\Omega} \abs{\nabla \phi}^{p+\delta} \di x
  \le C\left( \int_{\Omega} \abs{\mu}^s \di x \right)^{\frac{p-1}{s(p+\delta)}}.
\end{align}
where $C = C (d, p, s, \lambda_o,\Lambda_o,\diam(\Omega))  >0.$
\end{proposition}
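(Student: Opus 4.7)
The plan is to invoke the nonlinear Calder\'{o}n--Zygmund estimate of \cite{phucGlobalIntegralGradient2014} essentially as a black box, verifying that our hypotheses fit its structural framework and then tracking the dependence of constants. First I would check that (A2) and (A3), applied at $\xi_2 = 0$, give the standard $p$-growth
\begin{align*}
|\bfa(x,\xi)| \le \Lambda_{*} + \Lambda_o (1+|\xi|^2)^{(p-1-\alpha)/2} |\xi|^\alpha \le C (1 + |\xi|^{p-1}),
\end{align*}
and that (A4) is exactly the degenerate $p$-monotonicity assumed in Phuc's theorem. The boundary $\partial \Omega \in C^{1,1}$ trivially satisfies the Reifenberg-flatness hypothesis required there for global estimates, and measurability of $\bfa(\cdot,\xi)$ is granted by (A1).

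Second, I would observe that the parameter constraints $2 - \frac{1}{d} < p \le d$ and $s > \frac{1}{1 - 1/p + 1/d}$ are precisely the subcritical regime in which Phuc obtains a weak solution in $W^{1,p}_0(\Omega)$ together with the self-improvement $p \mapsto p + \delta$ for some $\delta = \delta(d,p,\lambda_o,\Lambda_o) > 0$. Since we assume the weak solution $\phi \in W^{1,p}_0(\Omega)$ is already given, only the a priori bound is needed, so the SOLA existence machinery of \cite{boccardoNonlinearEllipticParabolic1989} can be bypassed here. The continuity exponent $\alpha$ in (A3) enters only through the constant $C$ in \eqref{eq:55} and not through $\delta$, which is governed by the monotonicity/coercivity structure alone; this means the statement of \cref{sec:main-results-4} agrees verbatim with the output of Phuc's theorem up to a relabeling of constants.

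Third, the exponent $\frac{p-1}{s(p+\delta)}$ on the right-hand side of \eqref{eq:55} is fixed by a scaling argument: the formal substitution $(\phi,\mu) \mapsto (\lambda^{1/(p-1)}\phi, \lambda\mu)$ preserves the equation to leading order, since the operator is $(p-1)$-homogeneous up to the controlled lower-order term $\bfa(\cdot,0)$ bounded by $\Lambda_{*}$ in (A2). Balancing the powers of $\lambda$ on the two sides of the desired inequality forces the stated relation between $\delta$, $s$ and $p-1$, and one closes the estimate after absorbing the $\Lambda_{*}$ contribution into the constant $C$.

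The main obstacle I anticipate is bookkeeping rather than analysis: carefully reconciling the $\alpha$-H\"{o}lder continuity in (A3) with the simpler Lipschitz-type continuity typically used in the nonlinear Calder\'{o}n--Zygmund literature, and verifying that the lower-order inhomogeneity $\bfa(\cdot,0)$ is truly harmless for the self-improvement exponent. Once these structural checks are made, the pointwise Wolff-potential bound from \cite{phucGlobalIntegralGradient2014} yields \eqref{eq:55} directly, with $C$ depending on $d,p,s,\lambda_o,\Lambda_o$ and $\diam(\Omega)$ as stated.
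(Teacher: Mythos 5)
Your first step coincides with the paper's: check that \ref{cond:a-periodic}--\ref{cond:a-monotonicity} (extended by periodicity to $\RR^d\times\RR^d$) put you inside the framework of \cite[Theorem 1.1]{phucGlobalIntegralGradient2014}, and that existence can be taken for granted since a weak solution is assumed. But there is a genuine gap in how you pass from Phuc's theorem to \eqref{eq:55}. Phuc's estimate is not stated in terms of $\norm{\mu}_{L^s}$: it gives, for some structural $\delta>0$, a bound of the form $\int_{\Omega}\abs{\nabla\phi}^{p+\delta}\di x \le C\int_{\Omega}\calN_1(\abs{\mu})^{\frac{p+\delta}{p-1}}\di x$, with $\calN_1$ the first-order fractional maximal function (this is \eqref{eq:57} in the paper). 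The second half of the paper's proof, which your proposal omits entirely, converts this into an $L^s$ bound: one first uses the hypothesis $s>\frac{1}{1-\frac1p+\frac1d}$ to produce $\delta_1(d,p,s)$ with $\frac{p+\delta_1}{p-1}<\frac{ds}{d-s}$ (\eqref{eq:59}), shrinks $\delta$ below $\delta_1$, applies H\"older on the bounded domain (\eqref{eq:74}), and then invokes the mapping property $\calN_1\colon L^s\to L^{ds/(d-s)}$ from \cite[Theorem 3.1]{kinnunenRegularityFractionalMaximal2003} (\eqref{eq:58}). This is exactly where the lower bound on $s$ is used; your attribution of that condition to ``the subcritical regime in which Phuc obtains the self-improvement'' misplaces it, since $s$ does not appear in Phuc's theorem at all.

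Your proposed substitute for this step --- fixing the exponent $\frac{p-1}{s(p+\delta)}$ by the scaling $(\phi,\mu)\mapsto(\lambda^{1/(p-1)}\phi,\lambda\mu)$ --- does not work and is not a proof. Under that scaling the left-hand side of \eqref{eq:55} picks up $\lambda^{\frac{p+\delta}{p-1}}$ while the right-hand side with exponent $\theta$ picks up $\lambda^{s\theta}$, so balancing gives $\theta=\frac{p+\delta}{s(p-1)}$, not $\frac{p-1}{s(p+\delta)}$; in other words, a careful scaling check contradicts rather than confirms the exponent you claim it forces (and it flags that the exponent printed in \eqref{eq:55} is itself dimensionally suspect, which a scaling ``derivation'' cannot resolve). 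To close the argument you must actually carry out the maximal-function step: choose $\delta<\delta_1$, apply H\"older using $\abs{\Omega}<\infty$, and use the $L^s\to L^{ds/(d-s)}$ boundedness of $\calN_1$, tracking the resulting powers of $\int_{\Omega}\abs{\mu}^s\di x$ explicitly.
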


\begin{proof}[Proof of \cref{sec:main-results-4}]

  Since $s > \frac{1}{1 - \frac{1}{p}+\frac{1}{d}}$, there exists
$\delta_1 = \delta_1(p,d,s) > 0$ such that
$s > \frac{1}{ \frac{p-1}{p+\delta_1}+\frac{1}{d}} > \frac{1}{1 -
  \frac{1}{p}+\frac{1}{d}}$,
then
\begin{align}
  \label{eq:59}
\frac{p+\delta_1}{p-1} < \frac{ds}{d-s}.
\end{align}

  It is clear that if $\bfa\colon Y \times \RR^d \to \RR^d$ satisfies
  \ref{cond:a-periodic}--\ref{cond:a-monotonicity} then it also
  satisfies all the conditions in \cite[Theorem
  1.1]{phucGlobalIntegralGradient2014} (note that we can extend $\bfa$
  to $\tilde{\bfa}\colon \RR^d \times \RR^d \to \RR^d$ by
  periodicity). Therefore, we have
  $0< \delta = \delta(d,p,\lambda_o,\Lambda_o) < \delta_1$ small enough such
  that
\begin{align}
\label{eq:57}
  \int_{\Omega} \abs{\nabla \phi}^{p+\delta} \di x
  \le C \int_{\Omega} \calN_1 \left( \abs{\mu}
  \right)^{\frac{p+\delta}{p-1}} \di x,
\end{align}
for some $C = C (d,p,\delta,\diam(\Omega)) > 0,$ where $\calN_1$ is the fractional maximal function, defined as 
\begin{align*}
\calN_1 \left( \nu \right) (x) \coloneqq \sup_{r > 0} 
  \frac{r}{\abs{B_r(x)}} \nu \left( B_r(x) \right),
\end{align*}
for any nonnegative locally finite measure $\nu$ on $\RR^d$.

Since $0<\delta<\delta_1$ and $\Omega$ is bounded, by \eqref{eq:59} and H\"{o}lder's inequality, we have
\begin{align}
\label{eq:74}
\int_{\Omega} \calN_1 \left( \abs{\mu}
  \right)^{\frac{p+\delta}{p-1}} \di x
  \le C  \left( \int_{\Omega} \calN_1 \left( \abs{\mu}
  \right)^{\frac{ds}{d-s}} \di x \right)^{\frac{d-s}{ds}\cdot \frac{p-1}{p+\delta}}.
\end{align}

From \cite[Theorem
3.1]{kinnunenRegularityFractionalMaximal2003}, there exists $C =
C(d,p) > 0$ such that
\begin{align}
\label{eq:58}
  \int_{\Omega} \calN_1 \left( \abs{\mu} \right)^{\frac{ds}{d-s}}
  \le C \left( \int_{\Omega} \abs{\mu}^s \di x \right)^{\frac{d}{d-s}}
\end{align}

Combining \eqref{eq:57}, \eqref{eq:74}, and \eqref{eq:58}, we obtain \eqref{eq:55}.

\end{proof}

We are ready to show that the elastic displacement $\uu^{\varepsilon}$
actually belongs to some Sobolev spaces.

Fix $\varepsilon > 0$. Recall that
$\bg \in L^r (\Omega,\RR^d) \subset W^{-1,r}(\Omega, \RR^d)$ for some
$r > 1$. Applying \cref{sec:main-results-4} to the nonlinear
divergence problem \eqref{eq:p421}--\eqref{eq:p423}, and using the
fact that $\Omega$ is bounded, we obtain that there exists
$1 < q(\varepsilon) < \min \left\{ r,2 \right\}$ small enough, such
that
$\abs{\nabla \varphi^{\varepsilon}}^2 \in L^{q(\varepsilon)}(\Omega)$
and $\bg \in W^{-1,q(\varepsilon)}(\Omega,\RR^d)$. It also follows
that
$\maxss^{\varepsilon} \in L^{q(\varepsilon)}(\Omega, \RR^{d \times
  d})$.

Let $\bg'$ and $\ff'$ be some suitable Lebesgue integrable
  functions that will be specified later. Let
$\ww^{\varepsilon} \in W_0^{1,q(\varepsilon)'}(\Omega',\RR^d)$, where
$q(\varepsilon)^{'}$ is the H\"{o}lder conjugate of $q(\varepsilon)$,
be the (unique) solution of
\begin{align}
\label{eq:37}
\int_{\Omega} \bfB \left( \frac{x}{\varepsilon} \right) \, \DD(\ww^{\varepsilon}) : \DD(\uu)
  \di x 
  = \int_{\Omega} \bg' \cdot \uu \di x
  - \int_{\Omega} \ff' : \DD(\uu) \di x
\end{align}
for all $\uu \in \calD (\Omega,\RR^d)$.

For fixed $1 \le i, j \le d$, define the interpolation map
\begin{align*}
\mathrm{T}^{ij}\colon \left( L^2 (\Omega,\RR^d) \times L^2(\Omega,\RR^{d\times d}) \right)
  + \left( L^{\infty}(\Omega,\RR^d)\times L^{\infty}(\Omega,\RR^{d\times
  d}) \right)
  &\to L^2(\Omega') + L^{\infty} (
    \Omega')\\
  \left( \bg_2, \ff_2 \right) + \left( \bg_{\infty},\ff_{\infty}
  \right)
  &\mapsto \left[ \DD \ww^{\varepsilon}_2 + \DD \ww^{\varepsilon}_{\infty} \right]_{ij},
\end{align*}
where $\ww^{\varepsilon}_2$ and $\ww^{\varepsilon}_{\infty}$ are
solutions of \eqref{eq:37} with source terms
$(\bg',\ff') = (\bg_2,\ff_2)$ and
$(\bg',\ff') = (\bg_\infty,\ff_\infty)$, respectively.

On the one hand, the estimates \eqref{eq:29} and \eqref{eq:7} imply
the bounds of the restriction maps
\begin{align*}
&\norm{\mathrm{T}^{ij} \bigg\vert_{ L^2 (\Omega,\RR^d)\times L^2(\Omega,\RR^{d\times d})
  }}_{L^2 \to L^2}\\
  &\qquad\coloneqq \sup \left\{ \norm{\mathrm{T}^{ij}(\bg_2, \ff_2)}_{L^2(\Omega')}\colon
  \norm{\bg_2}_{L^2(\Omega,\RR^d)} \leq 1 , \norm{\ff_2}_{L^2(\Omega,\RR^d \RR^{d\times d} ) }
  \le 1 \right\}
  \le 2 C_2,\\
  &\norm{\mathrm{T}^{ij} \bigg\vert_{ L^\infty (\Omega,\RR^d)\times L^\infty(\Omega,\RR^{d\times d})
    }}_{L^\infty \to L^\infty}\\
  &\qquad\coloneqq \sup \left\{ \norm{T^{ij}(\bg_\infty, \ff_\infty)}_{L^{\infty}(\Omega')}\colon
  \norm{\bg_\infty}_{L^\infty(\Omega,\RR^d)} \leq 1 , \norm{\ff_\infty}_{L^\infty(\Omega,\RR^d \RR^{d\times d} )}
  \le 1 \right\}
  \le 2 C_1.
\end{align*}

On the other hand, the identification 
\begin{align*}
  \mathrm{Id} \colon L^s(\Omega,\RR^d) \times L^s (\Omega,\RR^{d\times d})
  &\to
  L^s (\Omega, \RR^{d^3})\\
  (\bg, \ff)
  &\mapsto
  (\bg_1, \ldots, \bg_d, \ff_{11}, \ldots, \ff_{1d}, \ldots,
  \ff_{d1},\ldots, \ff_{\,dd}) 
\end{align*}
is an isomorphism with respect $L^s-$norm for any $2 \le s \le \infty$.

Therefore, applying the Multilinear Riesz-Thorin Interpolation Theorem \cite[Corollary
7.2.11]{grafakosModernFourierAnalysis2014} to the map $\mathrm{T}^{ij} \circ \mathrm{Id}^{-1}$, there exists
$C_3(\varepsilon) > 0$ such that the norm of the restriction of $\mathrm{T}^{ij}$ on
\begin{align*}
  \left( L^{q (\varepsilon)} (\Omega,\RR^d)\times L^{q (\varepsilon)}(\Omega,\RR^{d\times d}) \right)
  \subset \left( L^2 (\Omega,\RR^d)\times L^2(\Omega,\RR^{d\times d}) \right)
  + \left( L^{\infty}(\Omega,\RR^d)\times L^{\infty}(\Omega,\RR^{d\times
  d}) \right)
\end{align*}
to $L^{q(\varepsilon)'}(\Omega')$ is bounded above by
$C_3(\varepsilon)$. In particular, the following $L^{{q
    (\varepsilon)'}}-$gradient estimate holds
\begin{align}
\label{eq:30}
\norm{\DD \ww^{\varepsilon}}_{L^{q (\varepsilon)'}(\Omega',\RR^{d\times
  d})}
  \le C_3(\varepsilon) d^2 \left( \norm{\bg'}_{L^{q
  (\varepsilon)'}(\Omega,\RR^d)} + \norm{\ff'}_{L^{q
  (\varepsilon)'}(\Omega,\RR^{d\times d})} \right)
\end{align}
whenever $\ww^{\varepsilon}$ is the solution of \eqref{eq:37} with
source term $(\bg',\ff')$ in $L^{q (\varepsilon)'}$.
By
\eqref{eq:30}, Korn's inequality, and H\"{o}lder's inequality, there
exists $C (\varepsilon) = C(\varepsilon, d,\kappa,\lambda_e,\Lambda_e,\lambda_o,\Lambda_o,\Omega',\Omega)$ such that 
\begin{align}
\label{eq:38}
  \norm{\ww^{\varepsilon}}_{W^{1,q(\varepsilon)'}(\Omega',\RR^d)}
  \le C(\varepsilon) \left(
  \norm{\bg'}_{L^{q(\varepsilon)'}(\Omega,\RR^d)} +
  \norm{\ff'}_{L^{q(\varepsilon)'}(\Omega,\RR^{d \times d})}\right). 
\end{align}

We now employ a duality argument to prove an a priori estimate for
$\uu^{\varepsilon} \in W_{\loc}^{1,q(\varepsilon)}(\Omega,\RR^d)
\subset \mathrm{BV}_{\loc}(\Omega,\RR^d)$ satisfying \eqref{eq:r7}.




Fix $\Omega' \subset\subset \Omega$. In \eqref{eq:37}, let $\bg' \in
\calD(\Omega',\RR^d),\ff \in \calD(\Omega',\RR^{d\times d})$ and
combine with \eqref{eq:r7}, \eqref{eq:38}, we obtain
\begin{align}
\label{eq:39}
  \begin{split}
    &\abs{\int_{\Omega'} \bg' \cdot \uu^{\varepsilon} \di x
  - \int_{\Omega'} \ff' : \DD(\uu^{\varepsilon}) \di x}\\
    &= \abs{\int_{\Omega'} \bfB \left( \frac{x}{\varepsilon} \right)\DD(\ww^{\varepsilon}) : \DD(\uu^{\varepsilon})
  \di x} \\
    &= \abs{\int_{\Omega'} \bg \cdot \ww^{\varepsilon} \di x
  - \int_{\Omega'} \bfC \left( \frac{x}{\varepsilon} \right)\maxss^{\varepsilon}: \DD(\ww^{\varepsilon})\di x}\\
    &\le C(\varepsilon) \left(
      \norm{\bg'}_{L^{q(\varepsilon)'}(\Omega',\RR^d)} +
      \norm{\ff'}_{L^{q(\varepsilon)'}(\Omega',\RR^{d\times
      d})}\right) \left(
      \norm{\bg}_{L^{q(\varepsilon)}(\Omega',\RR^d)} +
      \norm{\maxss^{\varepsilon}}_{L^{q(\varepsilon)}(\Omega',\RR^{d\times
      d})}\right)
  \end{split}
\end{align}

On the one hand, letting $\ff' = 0$ in \eqref{eq:39} implies 
\begin{align*}
  \abs{\int_{\Omega'} \bg' \cdot \uu^{\varepsilon} \di x}
  \le C(\varepsilon) 
      \norm{\bg'}_{L^{q(\varepsilon)'}(\Omega',\RR^d)} \left(
      \norm{\bg}_{L^{q(\varepsilon)}(\Omega',\RR^d)} +
      \norm{\maxss^{\varepsilon}}_{L^{q(\varepsilon)}(\Omega',\RR^{d\times
      d})}\right).
\end{align*}
Thus by Riesz Theorem, we obtain 
\begin{align*}
\norm{\uu^{\varepsilon}}_{L^{q(\varepsilon)}(\Omega',\RR^d)} \le
  C(\varepsilon) \left( \norm{\bg}_{L^{q(\varepsilon)}(\Omega',\RR^d)} +
  \norm{\maxss^{\varepsilon}}_{L^{q(\varepsilon)}(\Omega',\RR^{d\times
  d})} \right).
\end{align*}
On the other hand, letting $\bg' = 0$ in \eqref{eq:39} and arguing similarly, we obtain 
\begin{align*}
\norm{\DD(\uu^{\varepsilon})}_{L^{q(\varepsilon)}(\Omega',\RR^{d
  \times d})}
  \le C(\varepsilon) 
  \left(
      \norm{\bg}_{L^{q(\varepsilon)}(\Omega',\RR^d)} +
      \norm{\maxss^{\varepsilon}}_{L^{q(\varepsilon)}(\Omega',\RR^{d\times
      d})}\right).
\end{align*}
Combining the two estimates above, we conclude that
\begin{align}
\label{eq:40}
\norm{\uu^{\varepsilon}}_{W^{1,q(\varepsilon)}(\Omega',\RR^d)} \le
  C(\varepsilon) \left( \norm{\bg}_{L^{q(\varepsilon)}(\Omega',\RR^d)} +
  \norm{\maxss^{\varepsilon}}_{L^{q(\varepsilon)}(\Omega',\RR^{d\times
  d})} \right).
\end{align}

We have shown that every distributional solution $\uu^{\varepsilon}$
of \eqref{eq:r7} belongs to
$W_{\loc}^{1,q(\varepsilon)}(\Omega,\RR^d)$, and thus the normal
traces appearing in \eqref{eq:p413} are well-defined. It remains to
show that \eqref{eq:r7} has a distributional solution
$\uu^{\varepsilon} \in L^1_{\loc}(\Omega,\RR^d)$. This follows from
the existence and asymptotic decay of the Green's function associated with the
operator
$\calL^{\varepsilon}\coloneqq -\Div \left( \bfB
  \left( \frac{x}{\varepsilon}\right) \DD [\cdot]  \right) $ shown in \cite[Theorem 1]{conlonGreenFunctionElliptic2017}.  The uniqueness
of the distributional solution $\uu^{\varepsilon}$ follows from a
standard argument, by using density and the fundamental lemma of
calculus of variations.

\begin{remark}
\label{sec:high-regul-exist}
If we assume instead that $\bfB$ is in $VMO (\Omega,\RR^d)$, then the
$W^{1,p}-$estimate \eqref{eq:40} would be obtained via a real variable
method by Caffarelli and Peral
\cite{caffarelliW1PestimatesElliptic1998,shenPeriodicHomogenizationElliptic2018},
while the $W^{1,\infty}-$estimate \eqref{eq:29} was first obtained
via the compactness method
\cite{avellanedaCompactnessMethodsTheory1989,avellanedaCompactnessMethodsTheory1987}. In
this case, the existence of the solution $\uu^{\varepsilon}$ can be shown
by an approximation argument (the SOLA method - Solutions Obtained by
Limit of Approximations, see \cite{boccardoNonlinearEllipticParabolic1989}).


Indeed, from
\cite{caffarelliW1PestimatesElliptic1998,shenPeriodicHomogenizationElliptic2018},
we have the global estimate 
\begin{align}
\label{eq:93}
  \norm{\uu^{\varepsilon}}_{W^{1,q(\varepsilon)}(\Omega,\RR^d)}
  \le C \left( \norm{\bg}_{L^{q(\varepsilon)}(\Omega,\RR^d)} +
  \norm{\maxss}_{L^{q(\varepsilon)}(\Omega,\RR^{d\times d})} \right).
\end{align}
Let $\bg_n \in \calD(\Omega,\RR^d)$ and
$\ff_n \in \calD(\Omega,\RR^{d\times d})$ that converge to $\bg $ and
$\bfC \left( \frac{x}{\varepsilon} \right)\maxss^{\varepsilon}$,
respectively, in the $L^{q(\varepsilon)}-$norm, as $n\to \infty$. Observe that the
variational problem
\begin{align}
  \label{eq:41}
  \int_{\Omega} \bfB \left( \frac{x}{\varepsilon} \right)\DD(\uu_n^{\varepsilon}) : \DD(\vv)
  \di x 
  =\int_{\Omega} \bg_n\cdot \vv \di x
  - \int_{\Omega} \ff_n:\DD(\vv)\di x,
  \quad
  \forall \vv \in \calD (\Omega,\RR^d)
\end{align}
has a unique solution $\uu^{\varepsilon}_n \in
W_0^{1,2}(\Omega,\RR^d)\subset W_0^{1,q(\varepsilon)}(\Omega,\RR^d)$ by
the Lax-Milgram Theorem. Then, for $m,n \in \NN$, we have
$\uu^{\varepsilon}_m - \uu^{\varepsilon}_n$ is the solution of
\begin{align}
  \label{eq:42}
  \begin{split}
\int_{\Omega} \bfB \left( \frac{x}{\varepsilon} \right)\DD(\uu_m^{\varepsilon}-\uu_n^{\varepsilon}) : \DD(\vv)
  \di x
  = \int_{\Omega} \left( \bg_m - \bg_n \right) \cdot \vv \di x
  - \int_{\Omega} \left( \ff_m - \ff_n \right):\DD(\vv) \di x, \\
    \forall \vv \in \calD(\Omega,\RR^d).
  \end{split}
\end{align}
Because $\bg_m - \bg_n$ and $\ff_m - \ff_n$ are also in $L^{q(\varepsilon)}$, the
estimate \eqref{eq:40} applies to $\uu^{\varepsilon}_m -
\uu^{\varepsilon}_n$, so 
\begin{align}
\label{eq:43}
\norm{\uu^{\varepsilon}_m -\uu^{\varepsilon}_n}_{W^{1,q(\varepsilon)}(\Omega,\RR^d)} \le C(\varepsilon) \left(
  \norm{\bg_m - \bg_n}_{L^{q(\varepsilon)}(\Omega,\RR^d)} +
  \norm{\ff_m - \ff_n}_{L^{q(\varepsilon)}(\Omega,\RR^{d\times d})} \right) \cv 0
\end{align}
as $m, n \to \infty$. Therefore, $\uu^{\varepsilon}_n$ is a Cauchy
sequence in $W^{1,q(\varepsilon)}(\Omega,\RR^d)$, and so there
exists $\uu^{\varepsilon} \in W^{1,q(\varepsilon)}(\Omega,\RR^d)$
such that $\uu^{\varepsilon}_n\cv \uu^{\varepsilon}$, as
$n \to \infty$. By letting $n \to \infty$ in \eqref{eq:41} and using density arguments,  we obtain
$\uu^{\varepsilon}$ is the solution of \eqref{eq:r7}.
\end{remark}

\subsection{The two-scale homogenized system}
\label{sec:two-scale-homog}

Fix $K_0\subset\subset \Omega$, then there exists an open set $K$ such
that $K_0 \subset\subset K \subset\subset \Omega$.  By \eqref{eq:10},
there exists $\uu_{K_0}^0 \in \mathrm{BV}(K,\RR^d)$ and
$\uu_{K_0}^1 \in \calM \left( K, \mathrm{BV}_{\per}(Y,\RR^d) \right)$, such that,
up to a subsequence, see
\cite{amarTwoscaleConvergenceHomogenization1998},
\begin{subequations}
  \label{eq:12}
\begin{align}
\label{eq:11}
  \uu^{\varepsilon}
  &\wcv[\mathrm{BV}(K,\RR^d)] \uu_{K_0}^0,\\
  \nabla \uu^{\varepsilon}
  &\wcv[2] \nabla \uu_{K_0}^0 (\di x) + \nabla_y \uu_{K_0}^1 (\di x, \di y),
\end{align}
\end{subequations}
where $\calM \left( K, \mathrm{BV}_{\per}(Y,\RR^d)
\right)$ is the subspace of $\calM \left( K,
  L^{\frac{d}{d-1}}_{\per}(Y,\RR^d) \right)$ of measures $\mu$ from
the Borel $\sigma-$algebra on $K$ to $\mathrm{BV}_{\per}(Y,\RR^d)$ such that
$D_y \mu \in \calM \left( K \times Y, \RR^d \right) $, cf. \cite{amarTwoscaleConvergenceHomogenization1998}.
In \eqref{eq:r7}, letting $\vv(x) = \vv^0(x) + \varepsilon\vv^1 \left( x,
  \frac{x}{\varepsilon} \right)$ with $\vv^0 \in \calD \left(
  K,\RR^d \right)$ and $\vv^1 \in \calD \left( K,
  C_{\per}^{\infty}(Y) \right)$, we obtain
\begin{align*}
  &\int_{K} \bfB \left( \frac{x}{\varepsilon} \right) \left(
    \DD(\vv^0) + \varepsilon \, \DD \left( \vv^1 \right) \left( x,
    \frac{x}{\varepsilon} \right) + \DD_y \left( \vv^1 \right) \left( x,
    \frac{x}{\varepsilon}\right) \right): \DD \left( \uu^{\varepsilon}
    \right)\\
  &= \int_{K} \bg \cdot\left( \vv^0 + \varepsilon \, \vv^1 \right)
    + \int_{K} \bfC \left( \frac{x}{\varepsilon} \right) \maxss^{\varepsilon}(x) :\left( \DD
    \left( \vv^0 \right) + \varepsilon \, \DD \left( \vv^1 \right) + \DD_y
    \left( \vv^1 \right) \left( x,\frac{x}{\varepsilon} \right)\right)
\end{align*}

Taking $\varepsilon \to 0$, by \eqref{eq:12}  we have
\begin{align}
\label{eq:84}
  \begin{split}
    &\int_{K} \int_Y \bfB (y) \left( \DD \left( \vv^0 \right) +
      \DD_y \left( \vv^1 \right) \right) :\left( \DD \left( \uu_{K_0}^0
      \right)(\di x) + \DD_y \left( \uu_{K_0}^1 \right)(\di x, \di y)
      \right)\\
    &= \int_{K} \bg \cdot\vv^0 \di x
      + \lim_{\varepsilon \to 0} \int_{K} \bfC \left( \frac{x}{\varepsilon} \right) \maxss^{\varepsilon}(x) :\left( \DD
    \left( \vv^0 \right) + \varepsilon \, \DD \left( \vv^1 \right) + \DD_y
    \left( \vv^1 \right) \left( x,\frac{x}{\varepsilon}
      \right)\right)\di x\\
    &= \int_{K} \bg \cdot\vv^0 \di x
      + \lim_{\varepsilon \to 0} \int_{K} \bfC \left( \frac{x}{\varepsilon} \right) \maxss^{\varepsilon}(x) :\left( \DD
    \left( \vv^0 \right)+ \DD_y
    \left( \vv^1 \right) \left( x,\frac{x}{\varepsilon} \right)\right)
      \di x.
  \end{split}
\end{align}
To find the last limit, we claim that 
\begin{align}
\label{eq:85}
  \maxss^{\varepsilon}
  \wcv[2] \maxss^0(x,y)
  \coloneqq \left( \nabla \varphi^0 (x) + \nabla_y \varphi^1(x,y)
  \right) \otimes \left( \nabla \varphi^0 (x) + \nabla_y
  \varphi^1(x,y) \right) 
\end{align}
in $L^{p/2}(\Omega \times Y)$. Indeed, from \eqref{eq:112} and
\cref{sec:preliminary-results-1}, 
\begin{align*}
\frac{\partial \varphi^{\varepsilon}}{\partial x_i} \circ
  S^{\varepsilon}
  \cv \frac{\partial \varphi^0}{\partial x_i} + \frac{\partial
  \varphi^1}{\partial y_i} \quad \text{ in }L^p(\Omega \times Y).
\end{align*}
It follows that 
\begin{align*}
\left( \frac{\partial \varphi^{\varepsilon}}{\partial x_i} \frac{\partial \varphi^{\varepsilon}}{\partial x_j} \right)\circ
  S^{\varepsilon}
  \cv \left( \frac{\partial \varphi^0}{\partial x_i} + \frac{\partial
  \varphi^1}{\partial y_i} \right) \left( \frac{\partial \varphi^0}{\partial x_j} + \frac{\partial
  \varphi^1}{\partial y_j} \right) \quad \text{ in }L^{p/2}(\Omega \times Y),
\end{align*}
so by \cite[Proposition 2.5]{visintinTwoscaleCalculus2006} (this
result is necessary because it also applies to the case $p/2 = 1$),
\begin{align*}
 \frac{\partial \varphi^{\varepsilon}}{\partial x_i} \frac{\partial \varphi^{\varepsilon}}{\partial x_j} 
  \wcv[2] \left( \frac{\partial \varphi^0}{\partial x_i} + \frac{\partial
  \varphi^1}{\partial y_i} \right) \left( \frac{\partial \varphi^0}{\partial x_j} + \frac{\partial
  \varphi^1}{\partial y_j} \right) \quad \text{ in }L^{p/2}(\Omega \times Y).
\end{align*}
This convergence implies \eqref{eq:85}.
Since $\bfC \left( y \right) \left( \DD
    \left( \vv^0 \right)(x) + \DD_y
    \left( \vv^1 \right) \left( x,y
    \right)\right)$ is continuous with respect to $x$ and measurable
  with respect to $y$, it can be chosen as the test function for
  \eqref{eq:85}. Therefore, \eqref{eq:84} becomes
\begin{align}
\label{eq:14}
  \begin{split}
    &\int_{K} \int_Y \bfB (y) \left( \DD \left( \vv^0 \right) +
      \DD_y \left( \vv^1 \right) \right) :\left( \DD \left( \uu_{K_0}^0
      \right)(\di x) + \DD_y \left( \uu_{K_0}^1 \right)(\di x, \di y)
      \right)\\
    &= \int_{K} \bg \cdot\vv^0 \di x + \int_{K} \int_Y \bfC(y) \maxss^0(x,y): \left( \DD
      (\vv^0) + \DD_y (\vv^1) \right)  \di x \di y.
  \end{split}
\end{align}

Applying \cite[Theorem 1]{meyersResultsRegularitySolutions1975}  to the cell problem \eqref{eq:28}
and  \cref{sec:main-results-4} to the first homogenized equation in \eqref{summary-eqn}, there
exists $q^+ > 1$ such that
$\maxss^0 \in L^{q^+}(\Omega \times Y,\RR^{d\times d}).$ In
\eqref{eq:14}, setting $\vv^0 \equiv 0$,
$\vv^1 (x,y) = \eta (x)\ww(y)$ for
$\eta \in \calD(K),~\ww \in C^{\infty}_{\per}(Y,\RR^d)$, then use
the fundamental lemma of calculus of variation to obtain 
\begin{align*}
  \int_Y \bfB (y) 
  \DD_y \left( \ww^1 \right) :\left( \DD \left( \uu_{K_0}^0
  \right)(\di x) + \DD_y \left( \uu_{K_0}^1 \right) (\di y)
  \right)
  &=  \int_Y \bfC(y) \maxss^0(x,y):  \DD_y (\ww^1)   \di y,
\end{align*}
for a.e. $x \in K$. Now repeating the interpolation and duality
argument presented in \cref{sec:normal-trace-elastic}, we have
$ \DD \left( \uu_{K_0}^0 \right) + \DD_y \left( \uu_{K_0}^1 \right) $
is indeed in $L^{q^+}(Y,\RR^{d \times d})$, for a.e. $x \in K$. Since
$\uu_{K_0}^0$ depends only on $x$, we conclude
$\DD_y \left( \uu_{K_0}^1 \right) $ is in
$L^{q^+}(Y,\RR^{d \times d})$, for a.e. $x \in K$. Similarly, if we
set $\vv^1 \equiv 0$, $\vv^0 \in \calD(K,\RR^d),$ we obtain
$\DD \left( \uu_{K_0}^0 \right) + \DD_y \left( \uu_{K_0}^1 \right)$ is
in $L^{q^+}(K,\RR^d)$ for a.e. $y \in Y$. Integrating the sum over
$Y$, the last term vanishes due to periodicity, and therefore,
$\DD \left( \uu_{K_0}^0 \right) \in L^{q^+}(K,\RR^d)$, hence,
$\uu_{K_0}^0 \in W^{1,q^+}(K_0,\RR^d)$. As a consequence,
\eqref{eq:14} can be written in the classical form, without any Radon
measures, i.e.,
\begin{align}
\label{eq:94}
  \begin{split}
    &\int_{K} \int_Y \bfB (y) \left( \DD \left( \vv^0 \right) +
      \DD_y \left( \vv^1 \right) \right) :\left( \DD \left( \uu_{K_0}^0
      \right) + \DD_y \left( \uu_{K_0}^1 \right)\right) \di x \di y
      \\
    &= \int_{K} \bg \cdot\vv^0 \di x + \int_{K} \int_Y \bfC(y) \maxss^0(x,y): \left( \DD
      (\vv^0) + \DD_y (\vv^1) \right)  \di x \di y.
  \end{split}
\end{align}

Notice that the interpolation and duality arguments also provide
\begin{align}
\label{eq:96}
\norm{ \DD \left( \uu_{K_0}^0
  \right) + \DD_y \left( \uu_{K_0}^1 \right)}_{L^{q^+}(K_0 \times
  Y,\RR^{d \times d})}
  \le C \left( \norm{\bg}_{L^{q^+}(K_0,\RR^d)} +
  \norm{\maxss^0}_{L^{q^+}(K_0 \times Y,\RR^{d\times d})} \right).
\end{align}
Thus, by the SOLA argument used in \cref{sec:high-regul-exist}, we conclude
that \eqref{eq:94} has a unique (up to a constant) solution $(\uu_{K_0}^0,\uu_{K_0}^1)
\in W^{1,q^+}(K_0,\RR^d) \times L^{q^+} \left( K_0,
  W^{1,q^+}_{\per}(Y,\RR^d) \right) $.

For any $ x \in \Omega$, set
\begin{align}
\label{eq:102}
  (\ff^0,\ff^1)
  \coloneqq
  (\DD(\uu^0_{K_0}),\DD_y(\uu^1_{K_0})) \qquad \text{ if } x \in K_0 \text{ for
  some } K_0 \subset\subset \Omega,
\end{align}
then $(\ff^0,\ff^1)$ is well-defined due to the uniqueness of solution
of \eqref{eq:94}.
Let $(\uu^0,\uu^1) \in W_0^{1,q^+}(\Omega,\RR^d) \times L^{q^+} \left( \Omega, W^{1,q^+}_{\per}(Y,\RR^d) \right)$ be the solution of 
\begin{align}
\label{eq:103}
  &\int_{\Omega} \int_Y \left( \DD \left( \vv^0 \right) +
      \DD_y \left( \vv^1 \right) \right) :\left( \DD \left( \uu^0
      \right) + \DD_y \left( \uu^1 \right)\right) \di x \di y
      \\
    &= \int_{\Omega} \int_Y \left( \ff^0 + \ff^1 \right): \left( \DD
      (\vv^0) + \DD_y (\vv^1) \right)  \di x \di y.
\end{align}
then such solution exists and is unique by the SOLA argument used in
\cref{sec:high-regul-exist}. Moreover, 
\begin{align}
\label{eq:95}
  (\DD(\uu^0),\DD_y(\uu^1))
  &=(\DD(\uu_{K_0}^0),\DD(\uu_{K_0}^1)) &&\text{ if } x \in K_0 \text{ for some }
                                K_0\subset\subset \Omega,\\
  (\uu^0,\uu^1)
  &=0 &&\text{ if } x \in \partial \Omega,
\end{align}
It follows that $\uu^{\varepsilon} \wcv \uu^0$ in
distribution and by \eqref{eq:94},
\begin{align}
\label{eq:97}
  \begin{split}
    &\int_{\Omega} \int_Y \bfB (y) \left( \DD \left( \vv^0 \right) +
      \DD_y \left( \vv^1 \right) \right) :\left( \DD \left( \uu^0
      \right) + \DD_y \left( \uu^1 \right)\right) \di x \di y
      \\
    &= \int_{\Omega} \bg \cdot\vv^0 \di x + \int_{\Omega} \int_Y \bfC(y) \maxss^0(x,y): \left( \DD
      (\vv^0) + \DD_y (\vv^1) \right)  \di x \di y.
  \end{split}
\end{align}

The cell problems \eqref{eq:504}--\eqref{eq:504b} and the effective
system \eqref{summary-eqn} are derived by substituting the ansatz
  \begin{align*}
  \uu^1(x, y)
  &\coloneqq -\DD\left(\uu^0(x)\right)_{ij} \vec{\Upsilon}^{ij}(y) +
  \frac{\partial \varphi^0}{\partial x_i}(x) \frac{\partial
  \varphi^0}{\partial x_j}(x) \vec{\chi}^{ij}(y).
\end{align*}
The derivation of the effective coupled equation in \eqref{summary-eqn} follows in a similar fashion to \cite[Section
3.4]{dangHomogenizationNondiluteSuspension2021}.

Finally, the properties of $\bfa^{\hom}$ and $\bfB^{\hom}$ follow from
similar arguments as the ones presented in
\cite{chiadopiatHomogenizationMonotoneOperators1990,chiadopiatConvergenceMonotoneOperators1990} and
\cite{dangHomogenizationNondiluteSuspension2021}, and will be omitted here.

\hfill$\Box$

\section{Conclusions}
\label{sec:conclusions}
This paper is devoted to the periodic homogenization of nonlinear electric elastomers. More specifically, the nonlinear system \eqref{eq:p410}-\eqref{eq:p415} of an electrostatic equation coupled with an elasticity equation with periodic, highly oscillatory coefficients is considered, where $\varepsilon \ll 1$ is the size of microstructure.  
It is shown that the effective response of this system, given by \eqref{summary-eqn} with \eqref{eq:513}, consists of the homogeneous dielectric elastomer described by a nonlinear weakly coupled system of PDEs, whose coefficients depend on the coefficients of the original heterogeneous material and the geometry of the composite and the periodicity of the original microstructure. In particular, the effective coefficients 
\eqref{eq:513} are written in terms of solutions to the cell problems \eqref{eq:13}, \eqref{eq:504}, \eqref{eq:504b}. The main homogenization result is given in \cref{thm:main}, and the explicit corrector for the solution to the electrostatic problem is presented in \cref{sec:main-results-1}. It is worth noticing that in most of the existing literature on the topic of homogenization of monotone operators \cite{dalmasoCorrectorsHomogenizationMonotone1990,jimenezCorrectorsFieldFluctuations2010,bystromCorrectorsNonlinearMonotone2001}, the corrector results were not explicit. However, unlike the ones that did obtain the explicit corrector, e.g. \cite{allaireHomogenizationTwoscaleConvergence1992}, our results were obtained under minimal regularity assumptions. 
The {\it linear} case of 
\cite{francfortEnhancementElastodielectricsHomogenization2021,tianDielectricElastomerComposites2012} for $p=2$ can be recovered as a particular case of our analysis, that could also be straightforwardly extended to the case
when $\bfB$ is a $VMO$-function.
In addition, this paper contains two $L^p$-gradient estimates for elastic systems with discontinuous coefficients (see \cref{sec:lp-grad-estim} for \cref{sec:appendix-1} and \cref{sec:appendix}) that, together with the main result of the paper \cref{sec:main-results-1} (and its
extensions in \cref{sec:some-inter-corr}), constitute three stand alone results that could be found useful for the homogenization of electrostatic and/or elastic equations.

\appendix

\section{Appendix}
\label{sec:an-appendix}

\subsection{Other  first order correctors}
\label{sec:some-inter-corr}
Our arguments in the proof of \cref{sec:main-results-1} can be
modified to recover the first order correctors presented in previous
studies, e.g
\cite{bystromCorrectorsNonlinearMonotone2001,jimenezCorrectorsFieldFluctuations2010}.
In particular, the strong two-scale convergence and the explicit formula
obtained below by adapting our arguments are new.  To illustrate, we will
state here two results without proofs.

Our first proposition improves the one obtained in
\cite{bystromCorrectorsNonlinearMonotone2001}:
\begin{proposition}
\label{sec:other-first-order}
Let $c_1,  c_2 > 0$, $1 < p < \infty$, $0 \le \alpha \le \min \left\{
  1, p-1 \right\},$ and $\max \left\{ p,2 \right\} \le \beta <
\infty$. Let $\bfa: \Omega \times Y \times \RR^d \to  \RR^d$ be a
function such that $\bfa \left( x, \cdot, \xi \right)$ is measurable
and $Y-$periodic for $x \in \Omega$ and $\xi \in \RR^d$. Suppose
further that for $x \in \Omega,$ $y \in Y$, and $\xi_1, \xi_2 \in
\RR^d$, we have 
\begin{align}
\label{eq:24}
  \abs{\bfa(x,y,\xi_1) - \bfa (x,y,\xi_2)}
  &\le c_1 \left( 1 + \abs{\xi}_1 + \abs{\xi_2} \right)^{p-1-\alpha}
    \abs{\xi_1 - \xi_2}^{\alpha},\\
  \left( \bfa(x,y,\xi_1) - \bfa (x,y,\xi_2) \right)\cdot \left( \xi_1
  - \xi_2 \right)
  &\ge c_2 \left( 1+ \abs{\xi_1} + \abs{\xi_2} \right)^{p-\beta}
    \abs{\xi_1 - \xi_2}^{\beta}, \quad \text{ and }\\
  \bfa (x,y,0)
  &= 0.
\end{align}
Then for $f \in W^{-1,p'}(\Omega)$, there exist $\varphi^0 \in
W_0^{1,p}(\Omega)$ and $\varphi^1 \in L^p \left( \Omega,
  W_{\per}^{1,p}(Y)/\RR \right)$ such that the (unique) solution
$\varphi^{\varepsilon} \in W_0^{1,p}(\Omega)$ of 
\begin{align}
  \label{eq:25}
-\Div \left( \bfa \left( x, \frac{x}{\varepsilon}, \nabla
  \varphi^{\varepsilon} \right) \right)
  =f
\end{align}
satisfies 
\begin{align}
\label{eq:5}
  \varphi^{\varepsilon} \wcv[2] \varphi^0,
  \qquad\frac{\partial \varphi^{\varepsilon}}{\partial x_i}
  \cv[2] \frac{\partial \varphi^0}{\partial x_i} + \frac{\partial
  \varphi^1}{\partial y_i}.
\end{align}
In particular, if $\varphi^1$ is admissible, then
\begin{align}\label{eq:47}
\lim_{\varepsilon \to 0}\norm{\nabla \varphi^{\varepsilon}(\cdot) - \nabla \varphi^0(\cdot) -
  \nabla_y \varphi^1  \left( \cdot,\frac{\cdot}{\varepsilon}
  \right)}_{L^p(\Omega,\RR^{d})}
  = 0.
\end{align}
\end{proposition}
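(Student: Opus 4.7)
The plan is to follow the architecture of the proof of \cref{sec:main-results-1} very closely: derive an a priori bound, extract two-scale limits, identify them via a Minty-type monotonicity argument, and then upgrade weak two-scale convergence of the gradient to strong two-scale convergence through a Dal Maso--Defranceschi style corrector. The two genuine differences with the setup in \cref{sec:formulation} are that $\bfa$ now carries an extra slow variable $x$, and that the monotonicity exponent is $\beta \ge \max\{p,2\}$ instead of $2$. Accordingly, the whole chain of inequalities in \cref{sec:prel-estim} has to be reindexed with $\beta$ playing the role that $2$ played in \cref{sec:some-inequalities-1}.

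For the a priori bound I would test \eqref{eq:25} against $\varphi^{\varepsilon}$ and use monotonicity with $\bfa(x,y,0)=0$ to obtain
\begin{align*}
c_2 \int_{\Omega}(1+\abs{\nabla\varphi^{\varepsilon}})^{p-\beta}\abs{\nabla\varphi^{\varepsilon}}^{\beta}\di x
\le \norm{f}_{W^{-1,p'}}\norm{\varphi^{\varepsilon}}_{W_0^{1,p}},
\end{align*}
from which (after Young's inequality and the elementary bound $|\xi|^p \le C(1+|\xi|)^{p-\beta}|\xi|^{\beta}+C$, valid since $\beta\ge p$) one recovers $\norm{\varphi^{\varepsilon}}_{W_0^{1,p}}\le C(\norm{f}_{W^{-1,p'}})$. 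Standard two-scale compactness then produces $\varphi^0\in W_0^{1,p}(\Omega)$ and $\varphi^1\in L^p(\Omega,W_{\per}^{1,p}(Y)/\RR)$ with $\varphi^{\varepsilon}\wcv[2]\varphi^0$ and $\nabla\varphi^{\varepsilon}\wcv[2]\nabla\varphi^0+\nabla_y\varphi^1$, and a subsequential two-scale limit $F^0$ of $\bfa(x,x/\varepsilon,\nabla\varphi^{\varepsilon})$ in $L^{p'}(\Omega\times Y)$. Repeating verbatim the Minty argument carried out in \cref{sec:proof-crefs-results-2} (with admissible test functions $\mu^{\varepsilon}$ of the form \eqref{eq:128}; admissibility of $\bfa(x,x/\varepsilon,\mu^{\varepsilon})$ follows from \eqref{eq:24}) identifies $F^0(x,y)=\bfa(x,y,\nabla\varphi^0(x)+\nabla_y\varphi^1(x,y))$ and delivers the two-scale homogenized system for $(\varphi^0,\varphi^1)$. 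Uniqueness for that system, again a consequence of the monotonicity hypothesis, pins down the whole sequence and gives the weak two-scale convergences in \eqref{eq:5}.

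The upgrade to strong two-scale convergence of $\nabla\varphi^{\varepsilon}$ is the nontrivial part. I would first prove the $\beta$-analogue of \cref{sec:some-inequalities-1}: applying H\"older with parameters $\nu=\beta$, $\mu=p-1$, $\lambda=(\beta-p)/\beta$ yields
\begin{align*}
\norm{v-w}_{L^p(Z,\RR^d)}^p
\le C\Bigl(\int_Z(1+\abs{v}+\abs{w})^{p-\beta}\abs{v-w}^{\beta}\di z\Bigr)^{1/\beta}\bigl(\abs{Z}+\norm{v}_{L^p}^p+\norm{w}_{L^p}^p\bigr)^{(\beta-1)/\beta},
\end{align*}
since the residual exponent computation $(p-1)\beta/(\beta-1)+(\beta-p)/(\beta-1)=p$ makes the second factor collapse to a polynomial $L^p$ bound. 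With this inequality in hand, the correctors $\bfp(x,y,\xi):=\xi+\nabla_y\eta_{x,\xi}(y)$ (where $\eta_{x,\xi}\in W^{1,p}_{\per}(Y)/\RR$ solves the $x$-frozen cell problem $-\Div_y\bfa(x,y,\xi+\nabla_y\eta_{x,\xi})=0$) and the averaging operator $M^{\varepsilon}$ of \eqref{eq:8} satisfy the $\beta$-adapted versions of \eqref{eq:106}--\eqref{eq:108}; the exponent $\alpha$ in \eqref{eq:108} is replaced by $\alpha$ against the new denominator $\beta p-\alpha$, but the crucial qualitative fact that the right-hand side contains a positive power of $\norm{M^{\varepsilon}(\nabla\varphi^0)-\nabla\varphi^0}_{L^p}$ is preserved. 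Summing the cell-by-cell estimate through the discrete H\"older inequality \eqref{eq:86} and using \eqref{eq:21} gives the analogue of \cref{sec:preliminary-results-5}, and combining it with \cref{sec:preliminary-results}, \cref{sec:preliminary-results-4} yields $\norm{\nabla\varphi^{\varepsilon}}_{L^p(\Omega)}\to\norm{\nabla\varphi^0+\nabla_y\varphi^1}_{L^p(\Omega\times Y)}$, hence the strong two-scale convergence stated in \eqref{eq:5}. The componentwise statement and \eqref{eq:47} then follow from \cref{sec:preliminary-results-1} and \cref{sec:two-scale-conv-5}, exactly as in \cref{sec:proof-eqrefeq:5}.

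The main obstacle I anticipate is twofold. First, all preliminary estimates have to be recomputed with the new exponent $\beta$, and one has to verify carefully that the residual power of $\norm{M^{\varepsilon}(\nabla\varphi^0)-\nabla\varphi^0}_{L^p}$ appearing after the generalized H\"older step is strictly positive (so that the right-hand side still tends to $0$); this is where the assumption $\beta<\infty$ is essential. Second, the $x$-dependence of $\bfa$ forces the corrector $\bfp(x,y,\xi)$ and the solutions $\eta_{x,\xi}$ of the cell problem to depend measurably on $x$ as well, and Caratheodory-type regularity has to be checked so that the composition $\bfp(x,x/\varepsilon,M^{\varepsilon}(\nabla\varphi^0)(x))$ is measurable and can legitimately be used as an admissible test function in the two-scale framework; the continuity hypothesis \eqref{eq:24} and the uniform coercivity provide the necessary stability of $\eta_{x,\xi}$ in $x$, but this verification is the only place where a genuinely new argument, beyond a mechanical substitution of $\beta$ for $2$, is required.
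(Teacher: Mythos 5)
Your proposal is essentially the paper's intended argument: the paper states \cref{sec:other-first-order} without proof, remarking only that the arguments of \cref{sec:main-results-1} can be modified, and your sketch carries out exactly that modification — the a priori bound, the Minty-type identification of the two-scale limit, the $\beta$-generalization of \cref{sec:some-inequalities-1} (your H\"older parameters and the collapse of the second factor to an $L^p$ bound check out, as does the new denominator $p\beta-\alpha$ in the analogue of \eqref{eq:108}), the cell-by-cell summation, and the passage to strong two-scale convergence via \cref{sec:preliminary-results-1} and \cref{sec:two-scale-conv-5}. Your flagged caveat about the slow variable is apt, with the small correction that \eqref{eq:24} gives continuity only in $\xi$, so the measurability/continuity of $x\mapsto\eta_{x,\xi}$ must come from the Carath\'eodory-type hypotheses implicit in the cited works rather than from \eqref{eq:24} itself.
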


Next, we provide an explicit corrector for the one obtained in \cite{jimenezCorrectorsFieldFluctuations2010}: 
\begin{proposition}
  \label{sec:other-first-order-1}
  Suppose $F$ is a proper subset of the unit cell $Y$ with smooth
  boundary. Let 
\begin{align*}
  \sigma(y)
  &\coloneqq \charfn_F(y) \sigma_1 + (1-\charfn_F(y)) \sigma_2,\\
  p(y)
  &\coloneqq \charfn_F(y) p_1 + (1-\charfn_F(y)) p_2,
\end{align*}
for some fixed constant $\sigma_1, \sigma_2 > 0$, and $2 \le p_1 \le
p_2$. Define 
\begin{align*}
  \bfa (y, \xi)
  \coloneqq \sigma(y) \abs{\xi}^{p(y)-2}\xi, \qquad \text{ for } y \in
  Y, ~\xi \in \RR^d.
\end{align*}

Then for $f \in W^{-1,p'_1}(\Omega)$, there exist $\varphi^0 \in
W_0^{1,p_1}(\Omega)$ and $\varphi^1 \in L^{p_1} \left( \Omega,
  W_{\per}^{1,p_1}(Y)/\RR \right)$ such that the (unique) solution
$\varphi^{\varepsilon} \in W_0^{1,p_1}(\Omega)$ of 
\begin{align}
  \label{eq:100}
-\Div \left( \bfa \left( \frac{x}{\varepsilon}, \nabla
  \varphi^{\varepsilon} \right) \right)
  =f
\end{align}
satisfies 
\begin{align}
  \label{eq:83}
  \varphi^{\varepsilon} \wcv[2] \varphi^0, \qquad\frac{\partial \varphi^{\varepsilon}}{\partial x_i}
  \cv[2] \frac{\partial \varphi^0}{\partial x_i} + \frac{\partial
  \varphi^1}{\partial y_i}.
\end{align}
In particular, if $\varphi^1$ is admissible, then
\begin{align}\label{eq:99}
\lim_{\varepsilon \to 0}\norm{\nabla \varphi^{\varepsilon}(\cdot) - \nabla \varphi^0(\cdot) -
  \nabla_y \varphi^1  \left( \cdot,\frac{\cdot}{\varepsilon}
  \right)}_{L^p(\Omega,\RR^d)}
  = 0.
\end{align}
\end{proposition}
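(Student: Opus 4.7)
The plan is to adapt the arguments developed for \cref{sec:main-results-7} and \cref{sec:main-results-1} to the variable-exponent operator $\bfa(y,\xi)=\sigma(y)|\xi|^{p(y)-2}\xi$. The complication is that $\bfa$ does not globally satisfy \ref{cond:a-periodic}--\ref{cond:a-monotonicity} with a single exponent $p$, but on each of the two phases $F$ and $Y\setminus F$ it is a standard $p$-Laplacian-type operator (with $p_1$ and $p_2$ respectively, both $\ge 2$), so the arguments can be verified phase by phase and then reassembled.

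First, I would establish a priori bounds. Testing \eqref{eq:100} against $\varphi^{\varepsilon}$ and using the strong monotonicity of $|\xi|^{p-2}\xi$ separately on each phase yields the modular estimate $\int_{\Omega}\sigma(x/\varepsilon)|\nabla\varphi^{\varepsilon}|^{p(x/\varepsilon)}\,dx\le C\|f\|_{W^{-1,p_1'}}^{p_1'}$. Since $p_1\le p(y)\le p_2$ and $\min(\sigma_1,\sigma_2)>0$, the inequality $|\xi|^{p_1}\le 1+C\sigma(y)|\xi|^{p(y)}$ then gives a uniform bound on $\varphi^{\varepsilon}$ in $W_0^{1,p_1}(\Omega)$. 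Standard two-scale compactness extracts $\varphi^0\in W_0^{1,p_1}(\Omega)$ and $\varphi^1\in L^{p_1}(\Omega,W_{\per}^{1,p_1}(Y)/\RR)$ with $\varphi^{\varepsilon}\wcv[2]\varphi^0$ and $\nabla\varphi^{\varepsilon}\wcv[2]\nabla\varphi^0+\nabla_y\varphi^1$.

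Next, I would identify the two-scale limit $F^0$ of $\bfa(x/\varepsilon,\nabla\varphi^{\varepsilon})$ by repeating the Minty monotonicity argument used in \cref{sec:proof-crefs-results-2}. The bound on $F^{\varepsilon}\coloneqq\bfa(x/\varepsilon,\nabla\varphi^{\varepsilon})$ is obtained piecewise: on the $F$-phase it is bounded in $L^{p_1'}$, on the complement in $L^{p_2'}$. Using the perturbation $\mu^{\varepsilon}=\nabla(\varphi^0+\varepsilon\eta^1(\cdot,\cdot/\varepsilon))+t\eta(\cdot,\cdot/\varepsilon)$ with admissible $\eta^1,\eta$, passing to the two-scale limit, dividing by $t>0$, and letting $t\to 0$ yields $F^0(x,y)=\bfa(y,\nabla\varphi^0(x)+\nabla_y\varphi^1(x,y))$. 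This produces the cell and homogenized equations \eqref{eq:31} with the cell corrector $\eta_\xi\in W_{\per}^{1,p_1}(Y)/\RR$ solving \eqref{eq:13}, and hence the weak two-scale statements in \eqref{eq:83}.

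For the strong two-scale convergence, I would follow Section \ref{sec:proof-crefs-results} with the map $\bfp(y,\xi)=\xi+\nabla_y\eta_\xi(y)$ and the averaging operator $M^{\varepsilon}$. The chain of estimates in \cref{sec:prel-estim-1}--\cref{sec:preliminary-results-5} must be reverified: since $\bfa$ is a standard $p(y)$-Laplacian on each phase, the continuity inequality holds with $\alpha=1$ on each phase (with the local exponent in the role of $p$), and the monotonicity inequality holds with the same local exponent. I would split every integral over $Y$ into the contributions on $F$ and $Y\setminus F$, apply the H\"older estimates of \cref{sec:some-inequalities-1} with the appropriate local exponent on each piece, and then recombine using the uniform bound $|\xi|^{p_1}\le 1+C\sigma(y)|\xi|^{p(y)}$ to control everything by the single norm $\|\nabla\varphi^0\|_{L^{p_1}}$. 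The Dal Maso--Defranceschi corrector (\cref{sec:preliminary-results}) remains valid under these pointwise bounds, and then $\nabla\varphi^{\varepsilon}\cv[2]\nabla\varphi^0+\nabla_y\varphi^1$ in $L^{p_1}(\Omega\times Y,\RR^d)$ follows from \cref{sec:preliminary-results-4} and the characterization in \cref{sec:preliminary-results-1}; applying \cref{sec:two-scale-conv-5} under the admissibility assumption on $\varphi^1$ delivers \eqref{eq:99}.

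The main obstacle will be the phase-splitting bookkeeping in the analogues of \cref{sec:prel-estim-1} and \cref{sec:preliminary-results-3}: because the interior estimate of each piece uses its own conjugate exponent ($p_1'$ or $p_2'$), the Young and H\"older inequalities must be applied with different parameters on $F$ and $Y\setminus F$ and the results glued together, which requires carefully tracking how each factor is controlled by $\|\nabla\varphi^0\|_{L^{p_1}(\Omega)}$ via the modular bound, rather than by a single $L^{p_1}$ estimate as in the proof of \cref{sec:main-results-1}.
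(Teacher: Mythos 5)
Since the paper states this proposition in \cref{sec:some-inter-corr} explicitly \emph{without proof} (it only asserts that the arguments of \cref{sec:main-results-1} can be modified), there is no line-by-line comparison to make; your plan --- re-run the compactness/Minty identification of \cref{sec:proof-crefs-results-2} and then the corrector machinery of \cref{sec:proof-crefs-results} with phase-by-phase bookkeeping on $F$ and $Y\setminus F$ --- is precisely the adaptation the paper has in mind. The a priori part of your argument is sound: testing with $\varphi^{\varepsilon}$ gives the modular bound, $|\xi|^{p_1}\le 1+C\sigma(y)|\xi|^{p(y)}$ converts it into a uniform $W^{1,p_1}_0$ bound, the flux is bounded in $L^{p_1'}$ on one phase and $L^{p_2'}$ on the other (hence in $L^{p_2'}(\Omega)$), and $\bfa(y,\mu)$ is an admissible test function for smooth $\mu$ even though $p(\cdot)$ jumps across $\partial F$, so the Minty argument identifies $F^0$ and yields the two-scale system and the weak statements in \eqref{eq:83}.

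The genuine weak point is your claim that on each phase ``the monotonicity inequality holds with the same local exponent.'' The operator $\xi\mapsto\sigma_i|\xi|^{p_i-2}\xi$ is \emph{degenerate} at $\xi=0$: it satisfies $\left(\bfa(y,\xi_1)-\bfa(y,\xi_2)\right)\cdot(\xi_1-\xi_2)\ge c\,\sigma(y)\,|\xi_1-\xi_2|^{p(y)}$, equivalently a lower bound with the factor $\left(|\xi_1|+|\xi_2|\right)^{p(y)-2}$, but \emph{not} the nondegenerate bound \ref{cond:a-monotonicity} with $\left(1+|\xi_1|^2+|\xi_2|^2\right)^{\frac{p(y)-2}{2}}$ (take $\xi_2=0$ and $|\xi_1|$ small). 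Consequently the key step in \cref{sec:prel-estim-1} and \cref{sec:preliminary-results-5}, where the integral $\int\left(1+|\bfp|^2\right)^{\frac{p-2}{2}}|\bfp|^2$ coming from \cref{sec:some-inequalities-1} is absorbed by $\left[\bfa(y,\bfp)-\bfa(y,0)\right]\cdot\bfp$, cannot be run ``with the local exponent in the role of $p$'' as you propose; as written that step would fail. The repair is standard but changes the estimates: use the degenerate monotonicity directly, $|\xi_1-\xi_2|^{p(y)}\le C\left(\bfa(y,\xi_1)-\bfa(y,\xi_2)\right)\cdot(\xi_1-\xi_2)$, bypassing \cref{sec:some-inequalities-1} on each phase, which alters the exponents in the analogues of \eqref{eq:108} and \eqref{eq:117}; this is exactly the $\beta$-monotone structure of \cref{sec:other-first-order} (here with $\beta=p(y)$), and it is also why the non-explicit corrector you need is the one of \cite{jimenezCorrectorsFieldFluctuations2010} rather than a verbatim phase-wise rerun of \cite{dalmasoCorrectorsHomogenizationMonotone1990} --- citing it in place of \cref{sec:preliminary-results} is the cleaner route. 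With that corrected, the norm-matching propositions, the Radon--Riesz upgrade via \cref{sec:preliminary-results-1} (valid since $p_1\ge 2>1$), and \cref{sec:two-scale-conv-5} give \eqref{eq:83} and \eqref{eq:99}, with the final convergence holding in $L^{p_1}$.
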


\subsection{$L^p$-gradient estimates in elasticity}
\label{sec:lp-grad-estim}

The argument in \cref{sec:normal-trace-elastic} leads to two
$W^{1,r}$-estimates in elasticity, which may be useful for the study
of elastic systems.

\begin{proposition}
\label{sec:appendix-1}
Suppose the $C^{1,\alpha}$-domain $\Omega$ is a disjoint union of a finite
$N_{\Omega}$  subdomains with piecewise $C^{1,\alpha}$-boundaries, where 
$\alpha \in (0,1)$. Let
$\bfB \in \fraM \left( \lambda_e,\Lambda_e \right)$ be H\"{o}lder
continuous on the closure of each subdomain. Suppose
$\bg \in L^r(\Omega,\RR^d)$ for $1\leq r\leq \infty$. Then the weak
solution $\uu \in W_0^{1,r}(\Omega,\RR^d)$ of
\begin{align}
\label{eq:45}
 -\Div \left[ \bfB  \nabla
  \uu \right]
  &= \bg \quad \text{ in } \Omega,
\end{align}
satisfies the following estimate: \\
For $\Omega' \subset\subset \Omega$, there exists $C = C(d, \alpha, N_{\Omega}, \lambda_e,\Lambda_e,r, \Omega',\Omega) > 0$ such that
\begin{align}
\label{eq:46}
  \norm{\uu}_{W^{1,r}(\Omega',\RR^d)}
  \le  C \norm{\bg}_{L^r(\Omega,\RR^d)}.
\end{align}
\end{proposition}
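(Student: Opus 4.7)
The plan is to carry out the interpolation--duality argument of \cref{sec:normal-trace-elastic} in the simpler linear setting of \eqref{eq:45}, without the Maxwell-stress source term. The argument decomposes into three stages: (i) endpoint bounds at $r=2$ and $r=\infty$, (ii) Riesz--Thorin interpolation to cover $2\le r\le\infty$, and (iii) a duality argument closing the range $1<r<2$, with a separate SOLA-type treatment of the endpoint $r=1$.

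For (i): at the $L^2$ endpoint, Lax--Milgram together with Korn's and Poincar\'{e}'s inequalities applied to the variational form of \eqref{eq:45} yields $\norm{\uu}_{W^{1,2}(\Omega,\RR^d)}\le C\norm{\bg}_{L^2(\Omega,\RR^d)}$. At the $L^\infty$ endpoint, the piecewise Lipschitz estimate of Li \cite[Theorems~1.1 and 1.9]{liEstimatesEllipticSystems2003} applies under the present structural hypotheses (piecewise $C^{1,\alpha}$ interfaces, piecewise H\"{o}lder coefficient $\bfB$) and provides the interior bound
$$\norm{\uu}_{W^{1,\infty}(\Omega',\RR^d)}\le C\bigl(\norm{\uu}_{L^2(\Omega)}+\norm{\bg}_{L^\infty(\Omega)}\bigr)\le C\norm{\bg}_{L^\infty(\Omega)},$$
after absorbing the $L^2$ term. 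For (ii), component-wise the linear solution map $T^{ij}\colon\bg\mapsto(\nabla\uu)_{ij}|_{\Omega'}$ is therefore simultaneously bounded $L^2(\Omega)\to L^2(\Omega')$ and $L^\infty(\Omega)\to L^\infty(\Omega')$, so the multilinear Riesz--Thorin theorem \cite[Corollary~7.2.11]{grafakosModernFourierAnalysis2014}---exactly as invoked between \eqref{eq:30} and \eqref{eq:38}---extends this to every intermediate exponent, and a standard embedding combined with the $L^2$ global bound supplies the companion $L^r(\Omega')$ control of $\uu$ itself, giving \eqref{eq:46} for $2\le r\le\infty$.

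For (iii), in the range $1<r<2$, I would close the estimate by duality in the manner of \eqref{eq:39}--\eqref{eq:40}. Given $\bg'\in\mathcal{D}(\Omega',\RR^d)$ and $\ff'\in\mathcal{D}(\Omega',\RR^{d\times d})$, I solve the transposed problem $-\Div[\bfB^\top\nabla\ww]=\bg'-\Div\ff'$ for $\ww\in W_0^{1,r'}(\Omega,\RR^d)$; since $r'\ge 2$ and $\bfB^\top$ satisfies the same boundedness, ellipticity and piecewise-regularity hypotheses, the just-proved case applied to the transposed operator gives $\norm{\ww}_{W^{1,r'}(\Omega')}\le C(\norm{\bg'}_{L^{r'}}+\norm{\ff'}_{L^{r'}})$. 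The pairing
$$\int_\Omega\bg'\cdot\uu\,dx-\int_\Omega\ff':\nabla\uu\,dx=\int_\Omega\bfB\nabla\uu:\nabla\ww\,dx=\int_\Omega\bg\cdot\ww\,dx,$$
combined with H\"{o}lder's inequality and the Riesz representation theorem, then extracts \eqref{eq:46} in this range.

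The principal obstacle will be the endpoint $r=1$, where the pairing above fails because $L^\infty$ is not the dual of $L^1$. My plan there is to fall back on the SOLA approximation of \cref{sec:high-regul-exist}: mollify $\bg\in L^1(\Omega,\RR^d)$ by $\bg_n\in\mathcal{D}(\Omega,\RR^d)$ with $\norm{\bg_n}_{L^1}\le 2\norm{\bg}_{L^1}$, exploit the $1<r<2$ estimate at a fixed $r_0$ close to $1$ together with the uniform local integrability of $\{\bfB\nabla\uu_n\}$ afforded by the piecewise H\"{o}lder structure of $\bfB$ to extract a Cauchy subsequence in $W^{1,1}(\Omega',\RR^d)$, and pass to the distributional limit. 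A secondary technical point is that the duality step requires $\ww$ to be controlled globally on $\Omega$ rather than merely on $\Omega'$; this is secured because $\partial\Omega\in C^{1,\alpha}$ is compatible with the piecewise decomposition, so the global version of Li's estimate takes over near the boundary, which in turn is ultimately where the constant $C(d,\alpha,N_\Omega,\lambda_e,\Lambda_e,r,\Omega',\Omega)$ in \eqref{eq:46} accumulates all of its dependencies.
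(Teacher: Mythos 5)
For $1<r\le\infty$ your route is essentially the paper's own: the proposition is stated without a separate proof precisely because it is the linear core of the argument in \cref{sec:normal-trace-elastic} --- Lax--Milgram plus Korn at $r=2$ (as in \eqref{eq:29}), Li's piecewise-H\"older Lipschitz estimate at $r=\infty$ (as in \eqref{eq:7}), multilinear Riesz--Thorin in between, and the duality step \eqref{eq:39}--\eqref{eq:40} for $1<r<2$. Your use of the transposed tensor $\bfB^{\top}$ in the duality step is a correct refinement (the paper only needs symmetric $\bfB$ in that section), and your remark that the auxiliary solution must be controlled globally on $\Omega$, rather than assumed to vanish off $\Omega'$ as in \eqref{eq:90}, is well taken.

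The genuine gap is your treatment of the endpoint $r=1$. The SOLA scheme you sketch is circular: to extract a Cauchy sequence in $W^{1,1}(\Omega',\RR^d)$ from the approximations $\uu_n$ you would need an estimate of the form $\norm{\uu_m-\uu_n}_{W^{1,1}(\Omega')}\le C\norm{\bg_m-\bg_n}_{L^1(\Omega)}$, which is exactly the inequality being proved; the $r_0>1$ estimate does not substitute for it, because the mollified data converge to $\bg$ only in $L^1$ and $\norm{\bg_n}_{L^{r_0}}$ is not uniformly bounded when $\bg$ is merely $L^1$, while ``uniform local integrability of $\bfB\nabla\uu_n$'' yields at best weak compactness, not strong $W^{1,1}$ convergence. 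The obstacle is also misdiagnosed: to bound $\norm{\nabla\uu}_{L^1(\Omega')}$ by duality one tests against $\ff'\in\calD(\Omega',\RR^{d\times d})$ with $\norm{\ff'}_{L^{\infty}}\le 1$, so only $(L^1)^{*}=L^{\infty}$ is used, never the failing converse. Running the same pairing as in \cref{sec:bound-elast-displ}, with the $r'=\infty$ bound \eqref{eq:7} on the auxiliary solution, gives exactly as in \eqref{eq:90}--\eqref{eq:10} a bound on the total variation of $\nabla\uu$ over $\Omega'$ by $C\norm{\bg}_{L^1(\Omega)}$; since the proposition already assumes $\uu\in W_0^{1,1}(\Omega,\RR^d)$, that total variation coincides with $\norm{\nabla\uu}_{L^1(\Omega')}$ (and the Riesz-representation step likewise controls $\norm{\uu}_{L^1(\Omega')}$). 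Hence $r=1$ follows from the duality step directly, and no approximation argument is needed; as written, your limiting argument would not close.
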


In the context of homogenization, we have the following result:
\begin{proposition}
\label{sec:appendix}
Let $\bfB\in \fraM_{\mat} (\lambda_e,\Lambda_e)$ and $\Omega, \alpha$ be as above. Assume further that the unit cell $Y$ is a disjoint union of
$N_Y$ finite subdomains with piecewise $C^{1,\alpha}-$boundaries and
$\bfB$ is H\"{o}lder continuous on the closure of each
subdomain. Suppose $\bg \in L^r(\Omega,\RR^d)$ for
$1\leq r\leq \infty$. Then the weak solution
$\uu^{\varepsilon} \in W_0^{1,r}(\Omega,\RR^d)$ of
\begin{align}
\label{eq:45}
 -\Div \left[ \bfB \left( \frac{x}{\varepsilon} \right) \nabla
  \uu^{\varepsilon} \right]
  &= \bg \quad \text{ in } \Omega,
\end{align}
satisfies the following estimate: \\
For $\Omega' \subset\subset \Omega$, there exists $C = C (d, \alpha, N_Y, \lambda_e,\Lambda_e,r, \Omega',\Omega) > 0$,
independent of $\varepsilon > 0$, such that 
\begin{align}
\label{eq:46}
  \norm{\uu^{\varepsilon}}_{W^{1,r}(\Omega',\RR^d)}
  \le  C \norm{\bg}_{L^r(\Omega,\RR^d)}.
\end{align}
\end{proposition}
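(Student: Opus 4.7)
The plan is to mirror the interpolation-and-duality scheme developed in Section \ref{sec:normal-trace-elastic}, now applied to the single linear system \eqref{eq:45}. Only two ingredients are really new: the two endpoint estimates at $r=2$ and $r=\infty$ must be uniform in $\varepsilon$; the rest is a mechanical Riesz--Thorin plus duality.

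First I would establish the endpoint at $r=2$ globally: Lax--Milgram applied to the bilinear form $(\uu,\vv)\mapsto \int_\Omega \bfB(x/\varepsilon)\nabla\uu:\nabla\vv\,\di x$, which is coercive with constant $\lambda_e$ by \ref{cond:b-elliptic}, together with the Poincar\'e inequality, yields $\norm{\uu^\varepsilon}_{W^{1,2}(\Omega,\RR^d)} \le C\norm{\bg}_{L^2(\Omega,\RR^d)}$ with $C$ depending only on $\lambda_e, \Lambda_e, \Omega$, independent of $\varepsilon$. At the opposite endpoint $r=\infty$, the local Lipschitz bound
\[
  \norm{\nabla \uu^\varepsilon}_{L^\infty(\Omega',\RR^{d\times d})}
  \le C\bigl(\norm{\uu^\varepsilon}_{L^2(\Omega,\RR^d)} + \norm{\bg}_{L^\infty(\Omega,\RR^d)}\bigr)
  \le C\norm{\bg}_{L^\infty(\Omega,\RR^d)}
\]
follows from the Li--Nirenberg piecewise Lipschitz theorem \cite[Theorems 1.1 and 1.9]{liEstimatesEllipticSystems2003} coupled with the Avellaneda--Lin compactness method \cite{avellanedaCompactnessMethodsTheory1987,avellanedaCompactnessMethodsTheory1989}; the hypotheses on $Y$ and the piecewise H\"older regularity of $\bfB$ are exactly those required by Li--Nirenberg, while Avellaneda--Lin is what keeps the constant from blowing up.

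With both endpoints in hand, the range $2 \le r \le \infty$ follows by Riesz--Thorin interpolation \cite[Corollary 7.2.11]{grafakosModernFourierAnalysis2014}. For each component $(i,j)$, the solution map $T^{ij}_\varepsilon : \bg \mapsto [\nabla\uu^\varepsilon]_{ij}\big|_{\Omega'}$ is linear and bounded $L^2(\Omega,\RR^d)\to L^2(\Omega')$ and $L^\infty(\Omega,\RR^d)\to L^\infty(\Omega')$ uniformly in $\varepsilon$, so Riesz--Thorin produces a uniform $L^r\to L^r$ bound for $2\le r\le\infty$; adding the Poincar\'e control on $\uu^\varepsilon$ itself finishes the $W^{1,r}$-estimate on $\Omega'$ in this range.

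For $1 \le r < 2$, I would close the range by duality, exactly as in the main text. Take $r'=r/(r-1)>2$, pick $\Omega'\subset\subset\Omega''\subset\subset\Omega$, and for $\bg'\in\calD(\Omega',\RR^d)$ solve the adjoint problem $-\Div[\bfB^{\top}(x/\varepsilon)\nabla\ww^\varepsilon]=\bg'$ with zero boundary data. By the $r'$-case already proved and the global $W^{1,2}$-estimate plus Sobolev embedding, $\norm{\ww^\varepsilon}_{L^{r'}(\Omega)}$ is controlled by $\norm{\bg'}_{L^{r'}(\Omega')}$ uniformly in $\varepsilon$. Testing the equation for $\uu^\varepsilon$ against $\ww^\varepsilon$ (admissible by density since $\ww^\varepsilon \in W_0^{1,2}$) yields
\[
  \Bigl|\int_{\Omega'}\bg'\cdot\uu^\varepsilon\,\di x\Bigr|
  =\Bigl|\int_\Omega\bg\cdot\ww^\varepsilon\,\di x\Bigr|
  \le \norm{\bg}_{L^r(\Omega)}\norm{\ww^\varepsilon}_{L^{r'}(\Omega)}
  \le C\norm{\bg}_{L^r(\Omega)}\norm{\bg'}_{L^{r'}(\Omega')},
\]
and Riesz duality gives the $L^r$-bound on $\uu^\varepsilon$ over $\Omega'$; repeating the argument with a matrix-valued test source $\ff' \in \calD(\Omega',\RR^{d\times d})$, following the template of \eqref{eq:37}--\eqref{eq:39}, produces the $L^r$-bound on $\nabla\uu^\varepsilon$. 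The endpoint $r=1$ is then recovered by letting $r\downarrow 1$ and invoking H\"older on the bounded domain $\Omega$. The principal obstacle throughout is the uniform Lipschitz estimate: this is the only step where periodicity of $\bfB$ is essential, since the discontinuity set of $\bfB(\cdot/\varepsilon)$ becomes arbitrarily dense in $\Omega'$ as $\varepsilon\to 0$ and a cell-by-cell application of Li--Nirenberg would otherwise lose control of the constant.
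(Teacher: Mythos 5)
Your proposal follows essentially the same route as the paper: uniform-in-$\varepsilon$ endpoint estimates ($L^2$ via Lax--Milgram, interior Lipschitz via Li--Nirenberg combined with the Avellaneda--Lin compactness method), Riesz--Thorin interpolation for $2\le r\le\infty$, and the duality argument modeled on \eqref{eq:37}--\eqref{eq:39} for $1\le r<2$, which is exactly how the paper reduces \cref{sec:appendix} to the argument of \cref{sec:normal-trace-elastic}. The only cosmetic difference is your limiting step $r\downarrow 1$, which is unnecessary (and, as stated, awkward since $\norm{\bg}_{L^r}$ need not be finite for $r>1$): the duality with $r'=\infty$, i.e.\ the Lipschitz endpoint, treats $r=1$ directly, as the paper does for the BV-type bound in \cref{sec:bound-elast-displ}.
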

This proposition holds because the constant $C_1$ in \eqref{eq:7} can
be improved, so that it does not depend on $\varepsilon$, thanks to the
celebrated \emph{compactness method},
cf. e.g. \cite{avellanedaCompactnessMethodsTheory1989,avellanedaCompactnessMethodsTheory1987,liEstimatesEllipticSystems2003,dangGlobalGradientEstimate2022,prangeUniformEstimatesHomogenization2014,shenPeriodicHomogenizationElliptic2018,kenigHomogenizationEllipticSystems2013,armstrongQuantitativeStochasticHomogenization2019},
which exploits additional regularity information: \eqref{eq:45} is
$H$-convergent to a system with constant coefficients. Similar $W^{1,r}$-estimates were proved for the case when
$\bfB$ is in $VMO$ or $BMO$ spaces
cf. e.g., \cite{caffarelliW1PestimatesElliptic1998,shenPeriodicHomogenizationElliptic2018,armstrongQuantitativeStochasticHomogenization2019}
and references therein.

\section{Acknowledgements}
The work of the third author was supported  by NSF grant DMS-2110036.  This
material is based upon work supported by and while serving at the
National Science Foundation for the second author Yuliya Gorb. Any
opinion, findings, and conclusions or recommendations expressed in
this material are those of the authors and do not necessarily reflect
views of the National Science Foundation.

\bibliographystyle{habbrv}
\bibliography{homogenisation,arxiv,topological-insulator}

\end{document}